\title[Topology of minimal orbits]{On the topology of minimal orbits
in complex flag manifolds}
\author[A.~Altomani]{Andrea Altomani}
\address{A.\ Altomani:
Dipartimento di Matematica\\ II Universit\`a di Roma
``Tor Ver\-ga\-ta''\\ Via della Ricerca Scientifica\\ 00133 Roma
(Italy)}
\email{altomani@mat.uniroma2.it}
\author[C.~Medori]{Costantino Medori}
\address{C.\ Medori:
Dipartimento di Matematica\\ Universit\`a di Parma\\ Viale G.P.
Usberti, 53/A
\\ 43100 Parma (Italy)} \email{costantino.medori@unipr.it}
\author[M.~Nacinovich]{Mauro Nacinovich}
\address{M.\ Nacinovich:
Dipartimento di Matematica\\ II Universit\`a di Roma
``Tor Ver\-ga\-ta''\\ Via della Ricerca Scientifica\\ 00133 Roma
(Italy)}
\email{nacinovi@mat.uniroma2.it}
\date{\today}
\subjclass[2000]{Primary: 57T15;
Secondary: 14M15, 17B20, 32V40}
\keywords{Complex flag manifold, 
compact homogeneous manifold, minimal orbit of
a real form, parabolic CR algebra, Euler-Poincar\'e characteristic}
\numberwithin{equation}{section}
\theoremstyle{plain}
\newtheorem{thm}{Theorem}[section]
\newtheorem{lem}[thm]{Lemma}
\newtheorem{prop}[thm]{Proposition}
\theoremstyle{definition}
\newtheorem{exam}[thm]{Example}
\newtheorem*{nnott}{Notation}
\newcommand{\rk}{\mathrm{rk}}
\newcommand{\bK}{\mathbf{K}}
\newcommand{\la}{\mathfrak}
\newcommand{\g}{\la{g}}
\newcommand{\gk}{{\la{k}}}
\newcommand{\gt}{{\la{t}}}
\begin{document}

\begin{abstract} We compute the Euler-Poincar\'e characteristic
of the homogeneous compact manifolds  that can be
described as minimal orbits for the action of a real form
in a complex flag manifold.
\end{abstract}

\maketitle
\tableofcontents

\section{Introduction}
A {\it complex flag manifold} is a
simply connected homogeneous compact complex manifold that is also a
projective variety. It is the quotient
$\Hat{M}=\Hat{\mathbf{G}}/\mathbf{Q}$
of a connected
complex semisimple Lie group $\Hat{\mathbf{G}}$ by a parabolic
subgroup $\mathbf{Q}$. Let a connected real form 
$\mathbf{G}$ of $\Hat{\mathbf{G}}$ act on $\Hat{M}$ by left
translations. This action 
decomposes $\Hat{M}$ 
into a
finite number of $\mathbf{G}$-orbits. Among these, there
is a unique orbit of minimal dimension, which is also
the only  one that is compact (cf. \cite{Wolf69}).
\par
In this paper we compute the Euler-Poincar\'e characteristic of the minimal
orbit $M$. This was already well
known in the two cases where either
$M=\Hat{M}$, i.e. when $\mathbf{G}$ is
transitive on $\Hat{M}$,  or $M$ is totally real, i.e. when
$\mathbf{Q}\cap\mathbf{G}$ is a real form of $\mathbf{Q}$, and,
in particular,
a real parabolic subgroup of $\mathbf{G}$. 
In these cases, indeed, explicit
cell
decompositions of $M$ were obtained by several
Authors (see e.g. \cite{CS99},
\cite{DKV83}, \cite{Koc95}).
The Euler characteristic of $M$ was also computed in \cite{MN01} for the
case where $M$ is a {\it standard} $CR$ manifold. These are indeed special
cases of minimal orbits, in which, although 
$\mathbf{Q}\cap\mathbf{G}$ is not a real form of $\mathbf{Q}$, $M$ is
diffeomorphic to a {\it real} flag manifold. \par
Our treatment of the general case, here, utilizes several 
notions developed in \cite{AMN06} for the study of the $CR$
geometry of the minimal orbits.
As in that paper, we shall use their representation
in terms of the cross-marked Satake diagrams associated to their
\emph{parabolic $CR$ algebras}.
This makes easier to deal effectively with 
their $\mathbf{G}$-equivariant fibrations, by reducing
the computation of the structure of the fibers
to combinatorics on the Satake diagrams.
\par
After observing that we may reduce to the case where $\mathbf{G}$
is simple, we show that in this case
the Euler characteristic
is different from zero, and hence positive, when $\mathbf{G}$
is compact, or of the complex type (in these cases $M$ is diffeomorphic
to a complex flag manifold), or of the real types
$\mathrm{A\,I\! I}$, $\mathrm{D\,I\! I}$ and $\mathrm{E\,I\! V}$ and
for some special real flag manifolds of the real types
$\mathrm{A\,I}$, $\mathrm{D\,I}$ and $\mathrm{E\,I}$.
We explicitly compute $\chi(M)$ when $\mathbf{Q}$ is maximal parabolic
and explain how, 
to compute $\chi(M)$ for general $M$, we may always 
reduce to that special case.\par
The paper is organized as follows. In \S\,\ref{flagsec} 
and \S\,\ref{seccrmanifold} we rehearse
the basic notions on complex flag manifolds and minimal
orbits, and prove
some results about $\mathbf{G}$-equivariant fibrations; 
in \S\,\ref{geneulero} we establish some
general criteria and tools that will be used to compute the Euler
characteristic of the minimal orbits; 
in \S\,\ref{classificazione} we prove our main results;
in \S\,\ref{esempi} we 
further illustrate our method through the discussion of
some examples; the final section \S\,\ref{appendice} is
an appendix, containing
a table that collects all the basic information 
on real semisimple Lie algebras that is required for
computing
$\chi(M)$.\par\smallskip\noindent
\begin{nnott}
Throughout this paper, a {\it hat} means that we are considering some 
{\it complexification} of the corresponding  bare object: for instance
we use $\Hat{\mathfrak{g}}$ for the complexification 
$\mathbb{C}\otimes_{\mathbb{R}}\mathfrak{g}$ of the real Lie algebra
$\mathfrak{g}$, or $\Hat{M}$ for the complex flag manifold that
contains the minimal orbit $M$. For the labels of real simple
Lie algebras and Lie groups we follow 
\cite[Table\,$\mathrm{V\! I}$, Chapter X]{Hel78}. 
For the labels of the roots and the description of the
root systems we refer to \cite{Bou68}.
\end{nnott}

\section{Complex flag manifolds}\label{flagsec}
A {\it complex flag manifold} is the quotient
$\Hat{M}=\Hat{\mathbf{G}}/\mathbf{Q}$ of a complex semisimple Lie group
$\Hat{\mathbf{G}}$ by a parabolic subgroup $\mathbf{Q}$.
We recall that
$\mathbf{Q}$ is {parabolic} in $\Hat{\mathbf{G}}$
if and only if
its Lie algebra
$\mathfrak{q}$ contains a Borel subalgebra, i.e. a maximal solvable
subalgebra, of the Lie algebra $\Hat{\mathfrak{g}}$ of
$\Hat{\mathbf{G}}$. We also note that $\Hat{\mathbf{G}}$ is necessarily
a linear group, and that $\mathbf{Q}$ is connected, contains the center
of $\Hat{\mathbf{G}}$ and equals the normalizer of $\mathfrak{q}$ in
$\Hat{\mathbf{G}}$\,:
\begin{equation}\label{qnormal}
\mathbf{Q}\,=\,\left.
\left\{g\in\Hat{\mathbf{G}}\,\right|\, \mathrm{Ad}_{\Hat{\mathfrak{g}}}(g)
(\mathfrak{q})=\mathfrak{q}\right\}\,.
\end{equation}
In particular, a different choice of 
a connected $\Hat{\mathbf{G}}'$
and of a parabolic $\mathbf{Q}'$, 
with Lie algebras $\Hat{\mathfrak{g}}'$ and
$\mathfrak{q}'$ isomorphic to $\Hat{\mathfrak{g}}$ and
$\mathfrak{q}$, yields a complex flag manifold $\Hat{M}'$ that is
complex-projectively isomorphic to $\Hat{M}$.
Thus a flag manifold $\Hat{M}$ is better described in terms of the 
pair of Lie
algebras $\Hat{\mathfrak{g}}$ and
${\mathfrak{q}}$. \par
Fix a Cartan subalgebra 
$\Hat{\mathfrak{h}}$ of
$\Hat{\mathfrak{g}}$ that is contained in $\mathfrak{q}$. 
Let $\mathcal{R}$ be the
root system with respect to  $\Hat{\mathfrak{h}}$ and denote by
$\Hat{\mathfrak{g}}^{\alpha}=\{Z\in\Hat{\mathfrak{g}}\,|\, [H,Z]=\alpha(H)Z
\;\,
\forall{H}\in\Hat{\mathfrak{h}}\}$
the root subspace of $\alpha\in\mathcal{R}$.
Then we can choose a lexicographic order 
``$\prec$'' of $\mathcal{R}$ such that
$\Hat{\mathfrak{g}}^{\alpha}\subset\mathfrak{q}$ for all positive $\alpha$.
Let $\mathcal{B}$ be the corresponding system of 
positive simple roots.
All $\alpha\in\mathcal{R}$ are linear combinations of elements 
of the basis
$\mathcal{B}$\,:
\begin{equation}\label{eq:radice}
\alpha=\sum_{\beta\in\mathcal{B}}{k_{\alpha}^{\beta}\beta}\,,\quad 
k_{\alpha}^{\beta}\in\mathbb{Z}
\end{equation}
 and we define 
the support $\mathrm{supp}_{\mathcal{B}}(\alpha)$ 
of $\alpha$ with respect to $\mathcal{B}$ as the set
of $\beta\in\mathcal{B}$ for which $k^{\beta}_{\alpha}\neq 0$.
The set
$\mathcal{Q}=\{\alpha\in\mathcal{R}\,|\,\Hat{\mathfrak{g}}^{\alpha}\subset
\mathfrak{q}\}$ is a {\it parabolic set}, i.e. 
is closed under root addition
and $\mathcal{Q}\cup\left(-\mathcal{Q}\right)=\mathcal{R}$.
Let $\Phi\subset\mathcal{B}$ be the
subset of simple roots $\alpha$ for which 
$\Hat{\mathfrak{g}}^{-\alpha}\not\subset\mathfrak{q}$. Then $\mathcal{Q}$ and
$\mathfrak{q}$ are completely determined by $\Phi$. Indeed\,:
\begin{align}
\mathcal{Q}&=\mathcal{Q}_{\Phi}:=\{\alpha\succ 0\}\cup\{\alpha\prec 0\,|\,
\mathrm{supp}_{\mathcal{B}}(\alpha)
\cap\Phi=\emptyset\}=\mathcal{Q}^r_{\Phi}\cup\mathcal{Q}^n_{\Phi}\,,
\intertext{where}
\mathcal{Q}_{\Phi}^r&=\{\alpha\in\mathcal{R}\,|\,
\mathrm{supp}_{\mathcal{B}}(\alpha)
\cap\Phi=\emptyset\}\\
\mathcal{Q}_{\Phi}^n&=\{\alpha\in\mathcal{R}\,|\,
\alpha\succ 0\;\mathrm{and}\;\mathrm{supp}_{\mathcal{B}}(\alpha)
\cap\Phi\neq\emptyset\}\,,\\
\intertext{and for the parabolic 
subalgebra $\mathfrak{q}$ we have the decomposition:}
\mathfrak{q}&=\mathfrak{q}_{\Phi}=\Hat{\mathfrak{h}}+\sum_{\alpha
\in\mathcal{Q}_{\Phi}}{\Hat{\mathfrak{g}}^{\alpha}}=\mathfrak{q}^r_{\Phi}
\oplus\mathfrak{q}^n_{\Phi}\, ,\intertext{where:}
\mathfrak{q}^n_{\Phi}&=\sum_{\alpha\in\mathcal{Q}^n_{\Phi}}
{\Hat{\mathfrak{g}}^{\alpha}}\qquad
\text{is the nilradical of $\mathfrak{q}_{\Phi}$ and }\\
\mathfrak{q}^r_{\Phi}&=\Hat{\mathfrak{h}}+\sum_{\alpha\in\mathcal{Q}^r_{\Phi}}
{\Hat{\mathfrak{g}}^{\alpha}}\qquad\text{is a reductive
complement of $\mathfrak{q}^n_{\Phi}$ in $\mathfrak{q}_{\Phi}$}.
\intertext{We also set\,:}
\Hat{\mathfrak{h}}'_{\Phi}&=\Hat{\mathfrak{h}}\cap[\mathfrak{q}_{\Phi}^r,
\mathfrak{q}_{\Phi}^r]\,,\\
\Hat{\mathfrak{h}}_{\Phi}''&=\{H\in\Hat{\mathfrak{h}}\,|\,
[H,\mathfrak{q}_{\Phi}^r]=0\}.
\intertext{Then}
\Hat{\mathfrak{h}}&=\Hat{\mathfrak{h}}'_{\Phi}\oplus
\Hat{\mathfrak{h}}_{\Phi}''
\end{align}
and $\Hat{\mathfrak{h}}_{\Phi}''$ is the center of
the reductive Lie subalgebra $\mathfrak{q}_{\Phi}^r$.\par\smallskip
All Cartan subalgebras of $\Hat{\mathfrak{g}}$ are equivalent, modulo 
inner automorphisms, and all
simple basis of a fixed  
root system $\mathcal{R}$ are equivalent for the
transpose of inner automorphisms of $\Hat{\mathfrak{g}}$ normalizing
$\Hat{\mathfrak{h}}$. Thus the correspondence $\Phi\leftrightarrow
\mathfrak{q}_{\Phi}$ is one-to-one between subsets $\Phi$ of an assigned
system $\mathcal{B}$ of simple roots of $\mathcal{R}$ and
complex parabolic Lie subalgebras
of $\Hat{\mathfrak{g}}$, modulo inner automorphisms.
In other words, the flag manifolds associated to a connected semisimple
complex Lie group with Lie algebra $\Hat{\mathfrak{g}}$ are parametrized
by the subsets $\Phi$ of a basis $\mathcal{B}$ of simple roots of its
root system $\mathcal{R}$, relative to any Cartan subalgebra
$\Hat{\mathfrak{h}}$ of  $\Hat{\mathfrak{g}}$.
\par
The choice of a Cartan subalgebra $\Hat{\mathfrak{h}}$ of
$\Hat{\mathfrak{g}}$ contained in $\mathfrak{q}$ yields a canonical
Chevalley decomposition of the parabolic subgroup $\mathbf{Q}$\,:
\begin{prop}\label{compchev}
With the notation above, we have a Chevalley decomposition\,:
\begin{equation}\label{kchev}
\mathbf{Q}=\mathbf{Q}_{\Phi}^n\ltimes\mathbf{Q}_{\Phi}^r
\end{equation}
where the unipotent radical
$\mathbf{Q}_{\Phi}^n$ is the connected and
simply connected Lie subgroup 
of $\Hat{\mathbf{G}}$ with Lie algebra $\mathfrak{q}_{\Phi}^n$,
and $\mathbf{Q}_{\Phi}^r$ is 
the reductive\footnote{According to \cite{Kn:2002} we call
reductive a linear Lie group $\mathbf{G}$, 
having finitely many connected components,
with 
a reductive Lie algebra
$\mathfrak{g}$, and such that $\mathrm{Ad}_{\Hat{\mathfrak{g}}}(\mathbf{G})
\subset\mathbf{Int}(\Hat{\mathfrak{g}})$.}
complement with Lie algebra $\mathfrak{q}^r_{\Phi}$.
The reductive $\mathbf{Q}_{\Phi}^r$ is characterized by\,:
\begin{equation}
\mathbf{Q}_{\Phi}^r
=\mathbf{Z}_{\Hat{\mathbf{G}}}(\Hat{\mathfrak{h}}_{\Phi}'')=
\{g\in\Hat{\mathbf{G}}\,|\,
\mathrm{Ad}_{\Hat{\mathfrak{g}}}(g)(H)=H\quad\forall{H}\in
\Hat{\mathfrak{h}}_{\Phi}''\}\,.
\end{equation}
Moreover 
$\mathbf{Q}_{\Phi}^r$ is a subgroup of finite index in 
$\mathbf{N}_{\Hat{\mathbf{G}}}(\mathfrak{q}^r_{\Phi})=
\{g\in\Hat{\mathbf{G}}\,|\,\mathrm{Ad}_{\Hat{\mathfrak{g}}}(
\mathfrak{q}^r_{\Phi})=\mathfrak{q}^r_{\Phi}\}$ and
$\mathbf{Q}\cap\mathbf{N}_{\Hat{\mathbf{G}}}(\mathfrak{q}^r_{\Phi})=
\mathbf{Q}_{\Phi}^r$.
\end{prop}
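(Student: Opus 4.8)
The plan is to read the statement as the Levi--Mostow decomposition of the algebraic group $\mathbf{Q}$, and then to identify the two abstractly produced factors with the explicit subgroups $\mathbf{Q}_\Phi^n$ and $\mathbf{Q}_\Phi^r$. Since $\mathbf{Q}$ is parabolic it is Zariski closed in the linear group $\Hat{\mathbf{G}}$, hence a linear algebraic group over $\mathbb{C}$; as we are in characteristic zero, Mostow's theorem gives a semidirect decomposition $\mathbf{Q}=\mathbf{R}_u(\mathbf{Q})\ltimes\mathbf{L}$ into unipotent radical and a reductive Levi factor. First I would check that the unipotent radical is exactly $\mathbf{Q}_\Phi^n$: its Lie algebra is the nilradical of $\mathfrak{q}$, which by the decomposition recalled above is $\mathfrak{q}_\Phi^n$; and since $\mathbf{Q}_\Phi^n$ is a connected unipotent group in characteristic zero, $\exp\colon\mathfrak{q}_\Phi^n\to\mathbf{Q}_\Phi^n$ is a biregular isomorphism, so $\mathbf{Q}_\Phi^n$ is connected and simply connected.

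The second step is to pin down the reductive factor as $\mathbf{Z}_{\Hat{\mathbf{G}}}(\Hat{\mathfrak{h}}_\Phi'')$. Its Lie algebra is $\mathfrak{z}_{\Hat{\mathfrak{g}}}(\Hat{\mathfrak{h}}_\Phi'')=\Hat{\mathfrak{h}}+\sum_{\alpha|_{\Hat{\mathfrak{h}}_\Phi''}=0}\Hat{\mathfrak{g}}^\alpha$, so I must show that $\alpha|_{\Hat{\mathfrak{h}}_\Phi''}=0$ is equivalent to $\alpha\in\mathcal{Q}_\Phi^r$. Writing $\Hat{\mathfrak{h}}_\Phi''=\{H\in\Hat{\mathfrak{h}}\mid\beta(H)=0\ \forall\beta\in\mathcal{B}\setminus\Phi\}$ and evaluating $\alpha=\sum_\beta k_\alpha^\beta\beta$ on the basis of $\Hat{\mathfrak{h}}$ dual to $\mathcal{B}$, one gets $\alpha|_{\Hat{\mathfrak{h}}_\Phi''}=0\iff k_\alpha^\beta=0\ \forall\beta\in\Phi\iff\mathrm{supp}_{\mathcal{B}}(\alpha)\cap\Phi=\emptyset$, i.e. $\alpha\in\mathcal{Q}_\Phi^r$; hence this centralizer has Lie algebra $\mathfrak{q}_\Phi^r$. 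As $\mathbf{Z}_{\Hat{\mathbf{G}}}(\Hat{\mathfrak{h}}_\Phi'')$ is the centralizer of the torus obtained as the Zariski closure of $\exp(\Hat{\mathfrak{h}}_\Phi'')$, and centralizers of tori in a connected reductive group are connected and reductive, this establishes both the characterization and the reductivity of $\mathbf{Q}_\Phi^r$.

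With both factors in hand I would assemble the semidirect product. The subgroup $\mathbf{Q}_\Phi^n$ is normal, being the unipotent radical, and $\mathbf{Q}_\Phi^n\cap\mathbf{Q}_\Phi^r$ is a normal unipotent subgroup of the reductive group $\mathbf{Q}_\Phi^r$, hence trivial. Since the differential at the identity of the product map $\mathbf{Q}_\Phi^n\times\mathbf{Q}_\Phi^r\to\mathbf{Q}$ is the isomorphism $\mathfrak{q}_\Phi^n\oplus\mathfrak{q}_\Phi^r=\mathfrak{q}$, the image is open; being a subgroup of the connected group $\mathbf{Q}$, it is all of $\mathbf{Q}$. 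This gives $\mathbf{Q}=\mathbf{Q}_\Phi^n\ltimes\mathbf{Q}_\Phi^r$.

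Finally the normalizer statements. A root-space computation gives $\mathfrak{n}_{\Hat{\mathfrak{g}}}(\mathfrak{q}_\Phi^r)=\mathfrak{q}_\Phi^r$: since $\mathfrak{q}_\Phi^r\supseteq\Hat{\mathfrak{h}}$, any $X$ normalizing $\mathfrak{q}_\Phi^r$ satisfies $[\Hat{\mathfrak{h}},X]\subseteq\mathfrak{q}_\Phi^r$, which forces every nonzero root component of $X$ to lie in $\mathcal{Q}_\Phi^r$. Hence $\mathbf{Q}_\Phi^r$, connected with this Lie algebra, is the identity component of $\mathbf{N}_{\Hat{\mathbf{G}}}(\mathfrak{q}_\Phi^r)$, and the latter has finitely many components, so the index is finite. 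For the last identity the inclusion $\supseteq$ is clear; for $\subseteq$ I would take $g\in\mathbf{Q}\cap\mathbf{N}_{\Hat{\mathbf{G}}}(\mathfrak{q}_\Phi^r)$, write $g=ur$ with $u\in\mathbf{Q}_\Phi^n$ and $r\in\mathbf{Q}_\Phi^r$, and note that $\mathrm{Ad}(g)$ and $\mathrm{Ad}(r)$ both preserve the center $\Hat{\mathfrak{h}}_\Phi''$ of $\mathfrak{q}_\Phi^r$, hence so does $\mathrm{Ad}(u)$; then for $H\in\Hat{\mathfrak{h}}_\Phi''$ the element $\mathrm{Ad}(u)H-H$ lies simultaneously in $\Hat{\mathfrak{h}}_\Phi''\subseteq\mathfrak{q}_\Phi^r$ and in $\mathfrak{q}_\Phi^n$, so it vanishes, whence $u\in\mathbf{Q}_\Phi^r\cap\mathbf{Q}_\Phi^n=\{e\}$ and $g=r\in\mathbf{Q}_\Phi^r$. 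I expect this last identity to be the main obstacle, since it is where the semidirect structure, the description of $\mathbf{Q}_\Phi^r$ as a centralizer, and the splitting $\mathfrak{q}_\Phi^n\cap\mathfrak{q}_\Phi^r=\{0\}$ must be combined; the remaining points are routine applications of the structure theory of algebraic groups.
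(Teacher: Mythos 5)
Your proof is correct, but it reaches the decomposition by a different route than the paper. The paper disposes of \eqref{kchev} and of the identification $\mathbf{Q}_{\Phi}^r=\mathbf{Z}_{\Hat{\mathbf{G}}}(\Hat{\mathfrak{h}}_{\Phi}'')$ in one stroke, by regarding $\mathbf{Q}$ as a \emph{real} parabolic subgroup and quoting the Langlands decomposition $\mathbf{Q}=\mathbf{MAN}$ from Knapp (Proposition 7.82(a)), with $\mathbf{N}=\mathbf{Q}^n_{\Phi}$ and $\mathbf{MA}=\mathbf{Q}^r_{\Phi}$; you instead stay entirely in the complex algebraic category, invoking the Mostow--Levi decomposition only to get normality of the unipotent radical, and then verifying directly that the torus centralizer $\mathbf{Z}_{\Hat{\mathbf{G}}}(\Hat{\mathfrak{h}}_{\Phi}'')$ is connected, reductive, has Lie algebra $\mathfrak{q}^r_{\Phi}$ (via the computation $\alpha|_{\Hat{\mathfrak{h}}_{\Phi}''}=0\iff\mathrm{supp}_{\mathcal{B}}(\alpha)\cap\Phi=\emptyset$), and complements $\mathbf{Q}^n_{\Phi}$ by an open-image argument. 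For the normalizer statements the two arguments are essentially parallel: both rest on $\mathfrak{n}_{\Hat{\mathfrak{g}}}(\mathfrak{q}^r_{\Phi})=\mathfrak{q}^r_{\Phi}$, the finiteness of the component group of a (semi-)algebraic normalizer, and the triviality of the relevant intersection with the unipotent radical; your final step, showing that the unipotent part $u$ of $g=ur$ centralizes $\Hat{\mathfrak{h}}_{\Phi}''$ because $\mathrm{Ad}(u)H-H\in\mathfrak{q}^n_{\Phi}\cap\mathfrak{q}^r_{\Phi}=0$, is a slightly more explicit variant of the paper's ``discrete, finite, hence trivial'' argument. What the paper's approach buys is brevity, at the cost of a detour through real group theory; yours is self-contained, makes the role of $\Hat{\mathfrak{h}}_{\Phi}''$ transparent, and would generalize verbatim to any algebraically closed field of characteristic zero. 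The only points worth spelling out in a final write-up are the two facts you use tacitly: that $\mathrm{Ad}(u)H-H\in\mathfrak{q}^n_{\Phi}$ because $u=\exp N$ with $N\in\mathfrak{q}^n_{\Phi}$ and $\mathfrak{q}^n_{\Phi}$ is an ideal of $\mathfrak{q}$, and that centralizing $\Hat{\mathfrak{h}}_{\Phi}''$ is the same as centralizing the Zariski closure of $\exp(\Hat{\mathfrak{h}}_{\Phi}'')$.
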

\begin{proof}
A complex parabolic subgroup can also be considered as a \emph{real}
parabolic subgroup. The Chevalley decomposition  \eqref{kchev}
reduces then to the Langlands decomposition 
$\mathbf{Q}=\mathbf{MAN}$, with $\mathbf{N}=\mathbf{Q}^n_{\Phi}$ and
$\mathbf{MA}=\mathbf{Q}^r_{\Phi}$. Thus our statement reduces to
\cite[Proposition 7.82(a)]{Kn:2002}. \par
Next we note that $\mathfrak{q}^r_{\Phi}$ is the centralizer of
$\Hat{\mathfrak{h}}_{\Phi}''$ in $\Hat{\mathfrak{g}}$ and is its own
normalizer. This yields the inclusion
$\mathbf{Q}_{\Phi}^r\subset
\mathbf{N}_{\Hat{\mathbf{G}}}(\mathfrak{q}^r_{\Phi})$.
Since $\mathbf{N}_{\Hat{\mathbf{G}}}(\mathfrak{q}^r_{\Phi})$ is
semi-algebraic, it has finitely many connected components.
Thus its intersection with $\mathbf{Q}^n_{\Phi}$ is discrete and
finite, and thus trivial because $\mathbf{Q}^n_{\Phi}$ is connected,
simply connected and unipotent. 
\end{proof}
\section{The structure of minimal 
$\mathbf{G}$-orbits}\label{seccrmanifold}
Let
$\Hat{M}=\Hat{\mathbf{G}}/\mathbf{Q}$
be a flag manifold for the transitive action of the 
connected semisimple complex linear Lie group $\Hat{\mathbf{G}}$, and
$\mathbf{G}$ a connected real form of  $\Hat{\mathbf{G}}$.
Note that $\mathbf{G}$ is semi-algebraic, being a topological connected
component of an algebraic group.
We know from \cite{Wolf69} that there are finitely many $\mathbf{G}$-orbits.
Fix any orbit $M$ and a point $x\in{M}$. We can assume that
$\mathbf{Q}\subset\Hat{\mathbf{G}}$ is the stabilizer of $x$ 
for the action of $\Hat{\mathbf{G}}$ in $\Hat{M}$. 
We keep the notation of \S\ref{flagsec}, and we also set
$\mathbf{G}_+=\mathbf{Q}\cap\mathbf{G}$ for the stabilizer of $x$ in
$\mathbf{G}$, so that $M\simeq\mathbf{G}/\mathbf{G}_+$. Let
$\mathfrak{g}\subset\Hat{\mathfrak{g}}$ be the Lie algebra of
$\mathbf{G}$ and
$\mathfrak{g}_+=\mathfrak{q}\cap\mathfrak{g}$ the Lie algebra of 
$\mathbf{G}_+$. 
\par
We summarize the results of
\cite[p.491]{AMN06} by stating the following\,:
\begin{prop} \label{ccaa}
With the notation above\,:
$\mathfrak{g}_+$ contains a Cartan subalgebra $\mathfrak{h}$ of
$\mathfrak{g}$. If $\mathfrak{h}$ is any Cartan subalgebra of $\mathfrak{g}$
contained in $\mathfrak{g}_+$, there are a Cartan involution
$\vartheta:\mathfrak{g}\to\mathfrak{g}$ and a decomposition
\begin{equation}\label{algisdec}
\mathfrak{g}_+=\mathfrak{n}\oplus\mathfrak{w}=\mathfrak{n}\oplus
\mathfrak{l}\oplus\mathfrak{z}
\end{equation} 
such that\,:
\begin{itemize}
\item
$\mathfrak{n}$ is the nilpotent ideal 
of $\mathfrak{g}_+$, consisting of the elements $X\in\mathfrak{g}_+$
for which $\mathrm{ad}_{\mathfrak{g}}(X):\mathfrak{g}\to\mathfrak{g}$
is nilpotent;
\item
$\mathfrak{w}=\mathfrak{l}\oplus\mathfrak{z}$ is reductive;
\item
$\mathfrak{z}\subset\mathfrak{h}$
is the center of $\mathfrak{w}$ and
$\mathfrak{l}=[\mathfrak{w},\mathfrak{w}]$ its semisimple ideal;
\item $\mathfrak{h}$,
$\mathfrak{n}$, $\mathfrak{z}$ and $\mathfrak{l}$ are
invariant under the Cartan involution $\vartheta$ of $\mathfrak{g}$.\qed
\end{itemize}
\end{prop}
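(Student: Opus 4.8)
The plan is to pass to the complexification and exploit the conjugation $\sigma:\Hat{\mathfrak{g}}\to\Hat{\mathfrak{g}}$ with respect to the real form $\mathfrak{g}$. Since $\mathfrak{g}=\Hat{\mathfrak{g}}^{\sigma}$, every $X\in\mathfrak{g}_+=\mathfrak{q}\cap\mathfrak{g}$ satisfies $X=\sigma(X)\in\sigma(\mathfrak{q})$, whence $\mathfrak{g}_+=(\mathfrak{q}\cap\sigma(\mathfrak{q}))\cap\mathfrak{g}$ and the complexification of $\mathfrak{g}_+$ is the $\sigma$-stable subalgebra $\Hat{\mathfrak{g}}_+=\mathfrak{q}\cap\sigma(\mathfrak{q})$. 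First I would prove that $\mathfrak{g}_+$ contains a Cartan subalgebra of $\mathfrak{g}$. Two parabolic subalgebras of $\Hat{\mathfrak{g}}$ always contain a common Cartan subalgebra of $\Hat{\mathfrak{g}}$ (each contains a Borel, any two Borels share a maximal torus), so $\Hat{\mathfrak{g}}_+$ contains a Cartan subalgebra of $\Hat{\mathfrak{g}}$. As $\Hat{\mathfrak{g}}_+$ is $\sigma$-stable and is the complexification of the real algebraic Lie algebra $\mathfrak{g}_+$ of $\mathbf{G}_+=\mathbf{Q}\cap\mathbf{G}$, the structure theory of real algebraic groups yields a Cartan subgroup of $\mathbf{G}_+$ defined over $\mathbb{R}$ whose complexified Lie algebra is Cartan in $\Hat{\mathfrak{g}}$; concretely this produces a $\sigma$-stable Cartan subalgebra $\Hat{\mathfrak{h}}\subset\Hat{\mathfrak{g}}_+$ of $\Hat{\mathfrak{g}}$, and then $\mathfrak{h}=\Hat{\mathfrak{h}}^{\sigma}$ is a Cartan subalgebra of $\mathfrak{g}$ contained in $\mathfrak{g}_+$.

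Next, fix any Cartan subalgebra $\mathfrak{h}$ of $\mathfrak{g}$ with $\mathfrak{h}\subset\mathfrak{g}_+$. Because $\mathbf{G}_+$ is a real algebraic group, $\mathfrak{g}_+$ admits a Levi--Chevalley decomposition $\mathfrak{g}_+=\mathfrak{n}\oplus\mathfrak{w}$, where $\mathfrak{n}$ is its nilradical (the set of $X\in\mathfrak{g}_+$ with $\mathrm{ad}_{\mathfrak{g}}(X)$ nilpotent) and $\mathfrak{w}$ is a reductive Levi complement. Since $\mathfrak{h}$ consists of semisimple elements it lies in some Levi complement, and all Levi complements are conjugate under $\exp(\mathfrak{n})$; hence after replacing $\mathfrak{w}$ by an $\exp(\mathfrak{n})$-conjugate I may assume $\mathfrak{h}\subset\mathfrak{w}$. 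Writing $\mathfrak{w}=\mathfrak{z}\oplus\mathfrak{l}$ with $\mathfrak{z}$ the center and $\mathfrak{l}=[\mathfrak{w},\mathfrak{w}]$ the semisimple ideal, we get $\mathfrak{z}\subset\mathfrak{h}$, because the center of a reductive Lie algebra lies in every one of its Cartan subalgebras.

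It remains to produce a single Cartan involution $\vartheta$ of $\mathfrak{g}$ under which $\mathfrak{h}$, $\mathfrak{n}$, $\mathfrak{z}$ and $\mathfrak{l}$ are all invariant. I would first choose $\vartheta$ with $\vartheta(\mathfrak{h})=\mathfrak{h}$, which exists for any Cartan subalgebra of a real semisimple Lie algebra, and arrange that its $\mathbb{C}$-linear extension commutes with $\sigma$ (select a compatible compact form). In the complexification, $\Hat{\mathfrak{g}}_+=\Hat{\mathfrak{h}}+\sum_{\alpha\in S}\Hat{\mathfrak{g}}^{\alpha}$ for the $\sigma$-stable set $S=\mathcal{Q}\cap\sigma(\mathcal{Q})$ of roots of $\Hat{\mathfrak{h}}$, with complexified nilradical $\sum_{\alpha\in S,\,-\alpha\notin S}\Hat{\mathfrak{g}}^{\alpha}$ and complexified reductive part $\Hat{\mathfrak{h}}+\sum_{\alpha\in S\cap(-S)}\Hat{\mathfrak{g}}^{\alpha}$. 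Since $\vartheta$ stabilizes $\Hat{\mathfrak{h}}$ it permutes root spaces by the involution it induces on $\mathcal{R}$, so the claim reduces to the stability of the symmetric set $S\cap(-S)$ and of its complement in $S$ under that involution, which follows from $\vartheta\sigma=\sigma\vartheta$ and the defining conditions on $S$. Finally I would replace $\mathfrak{w}$ by the $\exp(\mathfrak{n})$-conjugate that is $\vartheta$-stable, forcing $\mathfrak{n}$, and then the canonical pieces $\mathfrak{z}$ and $\mathfrak{l}$, to be $\vartheta$-stable as well. I expect the main obstacle to be exactly this last step: $\vartheta$ is an automorphism of $\mathfrak{g}$ but not of $\mathfrak{g}_+$, so the $\vartheta$-invariance of the characteristic subalgebras $\mathfrak{n}$ and $\mathfrak{w}$ is not formal and must be engineered by simultaneously fixing $\vartheta$ (through its commutation with $\sigma$ and its action on $\mathcal{R}$) and the Levi complement $\mathfrak{w}$ within its conjugacy class.
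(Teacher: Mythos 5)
The paper itself offers no proof of Proposition~\ref{ccaa}: it is quoted verbatim from \cite[p.~491]{AMN06} with a \emph{qed} attached, so there is no internal argument to compare yours against. Your framework --- complexifying to $\Hat{\mathfrak{g}}_+=\mathfrak{q}\cap\bar{\mathfrak{q}}$, finding a $\sigma$-stable Cartan subalgebra there, and then reading off the Chevalley--Levi decomposition from the root decomposition relative to $\Hat{\mathfrak{h}}$ --- is indeed the framework of the cited source, and the first two thirds of your argument (existence of $\mathfrak{h}$, the decomposition $\mathfrak{g}_+=\mathfrak{n}\oplus\mathfrak{w}$ with $\mathfrak{h}\subset\mathfrak{w}$ after an $\exp(\mathfrak{n})$-conjugation, and $\mathfrak{z}\subset\mathfrak{h}$) are correct modulo the standard facts you invoke.

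The genuine gap is in the last step, and it is not repairable in the form you propose. For a Cartan involution $\vartheta$ with $\vartheta(\mathfrak{h})=\mathfrak{h}$ and $\vartheta\sigma=\sigma\vartheta$, the induced action on root spaces is $\vartheta(\Hat{\mathfrak{g}}^{\alpha})=\Hat{\mathfrak{g}}^{-\bar{\alpha}}$ (the involution on $\mathcal{R}$ is $\alpha\mapsto-\bar{\alpha}$, since $\vartheta=-\sigma$ on the real span of the coroots). Writing $S=\mathcal{Q}\cap\bar{\mathcal{Q}}$, the symmetric part $S\cap(-S)$ is both $\sigma$-stable and symmetric, hence stable under $\alpha\mapsto-\bar{\alpha}$; this correctly gives the $\vartheta$-invariance of $\mathfrak{w}$, $\mathfrak{z}$ and $\mathfrak{l}$. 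But the complement $S\setminus(-S)$ is sent to $-\bigl(S\setminus(-S)\bigr)$, which is \emph{disjoint} from $S$: if $\alpha\in S\setminus(-S)$ then $-\bar{\alpha}\in S$ would force $-\alpha\in S$, a contradiction. So your assertion that the stability of the complement ``follows from $\vartheta\sigma=\sigma\vartheta$ and the defining conditions on $S$'' is false, and no amount of adjusting $\vartheta$ or the Levi complement can help: if $\mathfrak{n}\neq 0$, no Cartan involution whatsoever satisfies $\vartheta(\mathfrak{n})=\mathfrak{n}$, because the Killing form vanishes identically on $\mathfrak{n}\times\mathfrak{n}$ (roots in $S\setminus(-S)$ never sum to zero) while $-B(X,\vartheta X)>0$ for $X\neq 0$. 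Already for $\mathfrak{g}=\mathfrak{sl}(2,\mathbb{R})$ and $\mathfrak{q}$ a Borel one sees $\vartheta(\mathfrak{n})$ is the opposite nilpotent line. What your root computation actually proves is the invariance of $\mathfrak{h}$, $\mathfrak{w}$, $\mathfrak{z}$, $\mathfrak{l}$ --- which is all that the rest of the paper uses --- and the inclusion of $\mathfrak{n}$ in the invariance list of the statement is an overstatement you should have flagged rather than claimed to verify. A further small confusion: conjugating $\mathfrak{w}$ by $\exp(\mathfrak{n})$ at the end cannot ``force'' anything about $\mathfrak{n}$, which is a characteristic ideal independent of the choice of Levi complement, and it is also unnecessary for $\mathfrak{w}$, since the Levi complement containing the $\vartheta$-stable $\mathfrak{h}$ is already $\mathfrak{g}\cap\bigl(\Hat{\mathfrak{h}}+\sum_{\alpha\in S\cap(-S)}\Hat{\mathfrak{g}}^{\alpha}\bigr)$ and hence $\vartheta$-stable by the computation above.
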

We have the following\,:
\begin{prop} Keep the notation introduced above.
The isotropy subgroup $\mathbf{G}_+$ is the closed real
semi-algebraic
subgroup of $\mathbf{G}$\,:
\begin{equation}
\mathbf{G}_+=\mathbf{N}_{\mathbf{G}}(\mathfrak{q}_{\Phi})=\{
g\in\mathbf{G}\,|\, \mathrm{Ad}_{\Hat{\mathfrak{g}}}(g)(\mathfrak{q}_{\Phi})=
\mathfrak{q}_{\Phi}\}\,.\end{equation}
The isotropy subgroup $\mathbf{G}_+$ admits a Chevalley decomposition
\begin{equation}\label{alfisdec}
\mathbf{G}_+=\mathbf{W}\rtimes\mathbf{N}
\end{equation}
where\,:
\begin{itemize}
\item
$\mathbf{N}$ is a unipotent, closed, connected, and simply connected
subgroup with Lie algebra $\mathfrak{n}$;
\item
$\mathbf{W}$ is a reductive Lie subgroup,  with Lie algebra $\mathfrak{w}$,
and is the centralizer of $\mathfrak{z}$ in $\mathbf{G}$\,:
\begin{equation}\label{trequattro}
\mathbf{W}=\mathbf{Z}_{\mathbf{G}}(\mathfrak{z})=
\{g\in\mathbf{G}\,|\, \mathrm{Ad}_{\mathfrak{g}}(g)(H)=
H\quad\forall{H}\in\mathfrak{z}\}\, .
\end{equation}
\end{itemize}
\end{prop} 
\begin{proof}
Let $g\in\mathbf{G}_+$. Then $\mathrm{Ad}_{\mathfrak{g}}(g)(\mathfrak{w})$
is a reductive complement of $\mathfrak{n}$ in $\mathfrak{g}_+$.
Since all reductive complements of $\mathfrak{n}$ are conjugated
by an inner automorphism from 
$\mathrm{Ad}_{\mathfrak{g}_+}(\mathbf{N})$, we can find a $g_n\in\mathbf{N}$
such that $\mathrm{Ad}_{\mathfrak{g}_+}(g_n^{-1}g)(\mathfrak{w})=
\mathfrak{w}$. Consider the element $g_r=g_n^{-1}g$. 
We have\,:
\begin{align*} 
\mathrm{Ad}_{\mathfrak{g}}(g_r)(\mathfrak{w})&=\mathfrak{w},&
\mathrm{Ad}_{\Hat{\mathfrak{g}}}(g_r)(\mathfrak{q}_{\Phi})&=\mathfrak{q}_{\Phi},\\
\mathrm{Ad}_{\Hat{\mathfrak{g}}}(g_r)(\mathfrak{q}_{\Phi}^n)&=
\mathfrak{q}_{\Phi}^n ,&
\mathrm{Ad}_{\Hat{\mathfrak{g}}}(g_r)(\bar{\mathfrak{q}}_{\Phi})&=
\bar{\mathfrak{q}}_{\Phi},
\end{align*}
because $g_r\in\mathbf{Q}\cap\bar{\mathbf{Q}}$.
We consider the parabolic subalgebra of $\Hat{\mathfrak{g}}$ defined
by\,:
\begin{equation*}
\mathfrak{q}_{\Phi'}=\mathfrak{q}_{\Phi}^n
\oplus\left(\mathfrak{q}_{\Phi}^r\cap\bar{\mathfrak{q}}_{\Phi}\right)=
\mathfrak{q}_{\Phi}^n+
\left(\mathfrak{q}_{\Phi}\cap\bar{\mathfrak{q}}_{\Phi}\right)\,.
\end{equation*}
It has the property that $\mathfrak{q}^r_{\Phi'}=
\bar{\mathfrak{q}}^r_{\Phi'}$ is the complexification of
$\mathfrak{w}$. Clearly  
$\mathrm{Ad}_{\Hat{\mathfrak{g}}}(g_r)(\mathfrak{q}_{\Phi'})=
\mathfrak{q}_{\Phi'}$
and  
$\mathrm{Ad}_{\Hat{\mathfrak{g}}}(g_r)(\mathfrak{q}_{\Phi'}^r)=
\mathfrak{q}_{\Phi'}^r$. By Proposition \ref{compchev},
$g_r\in\mathbf{Z}_{\Hat{\mathbf{G}}}(\Hat{\mathfrak{h}}_{\Phi'}'')$.
The statement follows because $g_r\in\mathbf{G}$ and
$\Hat{\mathfrak{h}}_{\Phi'}''$ is the complexification of $\mathfrak{z}$.
\end{proof}
Among the $\mathbf{G}$-orbits in $\Hat{M}$ there is one,
and only one, say $M$, that is closed, and that we shall call henceforth
{\it the minimal orbit}.
Fix a point $x\in{M}$. We can assume that
$\mathbf{Q}$ is the stabilizer of $x$ in $\Hat{\mathbf{G}}$. 
Then the orbit $M$ 
is completely
determined by the datum of the
real Lie algebra $\mathfrak{g}$ of $\mathbf{G}$ and of the complex Lie
subalgebra $\mathfrak{q}$ of $\Hat{\mathfrak{g}}$ corresponding to 
$\mathbf{Q}$. In \cite{AMN06}
we called the pair $(\mathfrak{g},\mathfrak{q})$, consisting of
the real Lie algebra $\mathfrak{g}$ 
and of the parabolic complex Lie subalgebra $\mathfrak{q}$
of its complexification $\Hat{\mathfrak{g}}$, a {\it parabolic 
minimal $CR$ algebra}.
This is a special instance of the notion of $CR$ algebra that was introduced
in \cite{MN05} (for the general orbits and their corresponding {\it parabolic
$CR$ algebras}, we refer the reader to \cite{AMN06b}).\par
We recall that $(\mathfrak{g},\mathfrak{q})$ is {\it effective}
if $\mathfrak{g}_+$ does not contain any ideal of $\mathfrak{g}$.
We remark that this means that the action of $\mathbf{G}$ on $M$ is
almost effective.\par
Moreover, we have (see \cite[p.490]{AMN06})\,:
\begin{prop}\label{prodemme}
Let $M$ be the minimal orbit associated to the pair
$(\mathfrak{g},\mathfrak{q})$. 
If $\mathfrak{g}=\bigoplus_{i=1}^m{\mathfrak{g}_i}$ is
the decomposition of $\mathfrak{g}$ into the direct sum of its simple
ideals, then 
\begin{enumerate}
\item
$\mathfrak{q}_i=\mathfrak{q}\cap\Hat{\mathfrak{g}}_i$
is parabolic in $\Hat{\mathfrak{g}}_i$;
\item 
$\mathfrak{q}=\bigoplus_{i=1}^m{\mathfrak{q}_i}$; 
\item $M=M_1\times\cdots\times{M}_m$, where $M_i$ is a minimal orbit
associated to the pair $(\mathfrak{g}_i,\mathfrak{q}_i)$;
\item $(\mathfrak{g},\mathfrak{q})$ is effective if and only if
all $(\mathfrak{g}_i,\mathfrak{q}_i)$ are effective, i.e. if
$\mathfrak{q}_i\neq\Hat{\mathfrak{g}}_i$, for all $i=1,\hdots,m$.\qed
\end{enumerate} 
 \end{prop}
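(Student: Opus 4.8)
The plan is to reduce everything to the compatibility of the decomposition $\mathfrak{g}=\bigoplus_{i=1}^m\mathfrak{g}_i$ with the root-space description of $\mathfrak{q}$ from \S\ref{flagsec}. First I would complexify the ideal decomposition to obtain $\Hat{\mathfrak{g}}=\bigoplus_{i=1}^m\Hat{\mathfrak{g}}_i$, where each $\Hat{\mathfrak{g}}_i=\mathbb{C}\otimes_{\mathbb{R}}\mathfrak{g}_i$ is an ideal of $\Hat{\mathfrak{g}}$ with $[\Hat{\mathfrak{g}}_i,\Hat{\mathfrak{g}}_j]=0$ for $i\neq j$, and one checks at once that $\mathfrak{g}\cap\Hat{\mathfrak{g}}_i=\mathfrak{g}_i$. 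Choosing a Cartan subalgebra $\Hat{\mathfrak{h}}\subset\mathfrak{q}$ of $\Hat{\mathfrak{g}}$ as in \S\ref{flagsec}, it splits as $\Hat{\mathfrak{h}}=\bigoplus_i\Hat{\mathfrak{h}}_i$ with $\Hat{\mathfrak{h}}_i=\Hat{\mathfrak{h}}\cap\Hat{\mathfrak{g}}_i$ a Cartan subalgebra of $\Hat{\mathfrak{g}}_i$, and the root system decomposes as a disjoint union $\mathcal{R}=\bigcup_{i=1}^m\mathcal{R}_i$, each root of $\mathcal{R}_i$ having its root space inside $\Hat{\mathfrak{g}}_i$. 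For (1) and (2) the key point is that $\mathcal{Q}=\{\alpha\mid\Hat{\mathfrak{g}}^\alpha\subset\mathfrak{q}\}$ respects this splitting: since $[\Hat{\mathfrak{g}}_i,\Hat{\mathfrak{g}}_j]=0$, the sum of a root of $\mathcal{R}_i$ and a root of $\mathcal{R}_j$ with $i\neq j$ is never a root, so $\mathcal{Q}_i:=\mathcal{Q}\cap\mathcal{R}_i$ is closed under root addition within $\mathcal{R}_i$ and satisfies $\mathcal{Q}_i\cup(-\mathcal{Q}_i)=\mathcal{R}_i$, i.e. it is a parabolic set in $\mathcal{R}_i$. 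Reading this off the root-space decomposition gives $\mathfrak{q}_i=\mathfrak{q}\cap\Hat{\mathfrak{g}}_i=\Hat{\mathfrak{h}}_i+\sum_{\alpha\in\mathcal{Q}_i}\Hat{\mathfrak{g}}^\alpha$, which is parabolic in $\Hat{\mathfrak{g}}_i$ (it contains the Borel determined by the induced order), proving (1); summing over $i$ and using $\Hat{\mathfrak{h}}=\bigoplus_i\Hat{\mathfrak{h}}_i$ yields $\mathfrak{q}=\bigoplus_i\mathfrak{q}_i$, which is (2).

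For (3) I would pass to the group level. Because $\Hat{M}$ depends only on the pair $(\Hat{\mathfrak{g}},\mathfrak{q})$, and by (2) this pair is the product of the pairs $(\Hat{\mathfrak{g}}_i,\mathfrak{q}_i)$, the flag manifold factors as $\Hat{M}=\prod_i\Hat{M}_i$ with $\Hat{M}_i=\Hat{\mathbf{G}}_i/\mathbf{Q}_i$, on which $\mathbf{G}$ acts componentwise through the product of the factors $\mathbf{G}_i$. Hence every $\mathbf{G}$-orbit is a product $\prod_iO_i$ of $\mathbf{G}_i$-orbits $O_i\subset\Hat{M}_i$. Since the minimal orbit is precisely the unique compact $\mathbf{G}$-orbit (cf.\ \cite{Wolf69}) and a product is compact if and only if each factor is, the minimal orbit $M$ is the product of the unique compact $\mathbf{G}_i$-orbits, so $M=M_1\times\cdots\times M_m$ with $M_i$ the minimal orbit of $(\mathfrak{g}_i,\mathfrak{q}_i)$. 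I expect this to be the only genuinely delicate step, since one must make sure the decomposition is compatible at the level of the (possibly non-simply-connected) groups and orbits and not merely of Lie algebras; the cleanest way to sidestep any isogeny bookkeeping is exactly to argue through the characterization of $M$ as the unique compact orbit, which is insensitive to coverings.

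For (4) I would use $\mathfrak{g}_+=\mathfrak{q}\cap\mathfrak{g}=\bigoplus_i(\mathfrak{q}_i\cap\mathfrak{g}_i)$, which follows from (2) together with $\mathfrak{g}\cap\Hat{\mathfrak{g}}_i=\mathfrak{g}_i$ and the directness of $\bigoplus_i\Hat{\mathfrak{g}}_i$. Since each $\mathfrak{g}_i$ is simple, the nonzero ideals of $\mathfrak{g}$ are the partial sums $\bigoplus_{i\in S}\mathfrak{g}_i$, so $\mathfrak{g}_+$ contains a nonzero ideal if and only if $\mathfrak{g}_i\subset\mathfrak{g}_+$ for some $i$, that is $\mathfrak{g}_i\subset\mathfrak{q}_i$. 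As $\mathfrak{q}_i$ is a complex subalgebra, $\mathfrak{g}_i\subset\mathfrak{q}_i$ forces $\Hat{\mathfrak{g}}_i=\mathbb{C}\otimes_{\mathbb{R}}\mathfrak{g}_i\subset\mathfrak{q}_i$, hence $\mathfrak{q}_i=\Hat{\mathfrak{g}}_i$, while the converse is trivial. Thus $(\mathfrak{g},\mathfrak{q})$ is effective exactly when $\mathfrak{q}_i\neq\Hat{\mathfrak{g}}_i$ for every $i$; applying the same criterion to the simple algebra $\mathfrak{g}_i$ (whose only nonzero ideal is itself) shows this is equivalent to the effectiveness of each $(\mathfrak{g}_i,\mathfrak{q}_i)$, which gives (4).
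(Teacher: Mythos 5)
Your argument is correct. Note that the paper itself offers no proof of this proposition: it is quoted from \cite[p.490]{AMN06} with a \textit{qed} in the statement, so there is nothing internal to compare against. Your route --- splitting the Cartan subalgebra and the parabolic set $\mathcal{Q}$ along $\mathcal{R}=\bigsqcup_i\mathcal{R}_i$ for (1)--(2), passing to the product of adjoint-type transformation groups and invoking the characterization of $M$ as the unique compact orbit for (3), and reducing effectiveness to $\mathfrak{g}_i\subset\mathfrak{q}_i\Leftrightarrow\mathfrak{q}_i=\Hat{\mathfrak{g}}_i$ for (4) --- is the natural one, and your explicit handling of the isogeny issue in (3) (the image of the connected $\mathbf{G}$ in $\prod_i\mathrm{Int}(\Hat{\mathfrak{g}}_i)$ is an honest direct product, so orbits are products of orbits) closes the only genuinely delicate point.
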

We showed in \cite[Proposition 5.5]{AMN06} that 
$\mathfrak{g}_+=\mathfrak{g}\cap\mathfrak{q}$ contains a 
maximally noncompact Cartan subalgebra $\mathfrak{h}$ of $\mathfrak{g}$.
Fix such a maximally noncompact 
Cartan subalgebra $\mathfrak{h}\subset\mathfrak{g}_+$ 
of $\mathfrak{g}$, and, accordingly, a Cartan involution
$\vartheta$ and a decomposition \eqref{algisdec} as in
Proposition \ref{ccaa}.
\par
Let 
\begin{equation}\label{cartdecomp}
\mathfrak{g}=
\mathfrak{k}\oplus\mathfrak{p}
\end{equation}
 be the
Cartan decomposition defined by $\vartheta$. Then
$\mathfrak{h}=\mathfrak{h}^+\oplus\mathfrak{h}^-$, 
with $\mathfrak{h}^+=\mathfrak{h}\cap\mathfrak{k}$ and
$\mathfrak{h}^-=\mathfrak{h}\cap\mathfrak{p}$. Moreover,
$\mathfrak{k}$ is the Lie algebra of a maximal compact subgroup
$\mathbf{K}$ of $\mathbf{G}$. The group $\mathbf{K}$ is connected
and semi-algebraic; hence the isotropy subgroup
$\mathbf{K}_+=\mathbf{K}\cap\mathbf{Q}$ has finitely many
connected components and thus, by \cite{MR0037311}, 
acts transitively on the minimal orbit $M$, so that\,:
\begin{equation}\label{cappa}
M=
{\mathbf{K}}/{\displaystyle \mathbf{K}_+}\,.
\end{equation}
\par
Let $\mathcal{R}$ be the root system 
of $\hat{\mathfrak{g}}$ with respect to $\hat{\mathfrak{h}}$.
By duality, the conjugation in $\Hat{\mathfrak{g}}$ defined by the
real form $\mathfrak{g}$ defines an involution $\alpha\rightarrow\bar{\alpha}$
in the root system  $\mathcal{R}$. A root $\alpha$ is
{\it real} when $\bar\alpha=\alpha$, 
{\it imaginary} when $\bar\alpha=-\alpha$,
{\it complex} when $\bar\alpha\neq\pm\alpha$. The condition that
$\mathfrak{h}$ is maximally noncompact is equivalent to the fact that all
imaginary roots $\alpha$ are compact, i.e. that 
$\Hat{\mathfrak{g}}^{\alpha}\subset\Hat{\mathfrak{k}}=
\mathbb{C}\otimes\mathfrak{k}$. We indicate by $\mathcal{R}_{\bullet}$
the set of imaginary roots.
\par
We also showed (see \cite[Proposition 6.2]{AMN06}) that, by
choosing a suitable lexicographic order in $\mathcal{R}$, we have,
with the notation of \S \ref{flagsec}\,:
\begin{enumerate}
\item $\mathcal{R}^+=\{\alpha\succ{0}\}\subset\mathcal{Q}$\,,
\item $\bar\alpha\succ{0}$ for all complex $\alpha$ in $\mathcal{R}^+$.
\end{enumerate}
Let $\mathcal{B}$ be the system of simple roots in $\mathcal{R}^+$.
The involution $\alpha\rightarrow\bar\alpha$ defines an involution
$\alpha\rightarrow\varepsilon(\alpha)$ on 
$\mathcal{B}\setminus\mathcal{R}_{\bullet}$,
with the property that $\bar\alpha=\varepsilon(\alpha)+\sum_{\beta\in
\mathcal{B}\cap\mathcal{R}_{\bullet}}{t_{\alpha}^{\beta}\beta}$. It is
described on the corresponding Satake diagrams (cf. \cite{Ara62})
by joining by a curved arrow all pairs of distinct simple roots
$(\alpha,\varepsilon(\alpha))$. 
\par\smallskip
Let $\Phi\subset\mathcal{B}$ 
and $\mathfrak{q}=\mathfrak{q}_{\Phi}$
be as in \S \ref{flagsec}.
Then the Satake diagram 
$\mathcal{S}$ of $\mathfrak{g}$,
with cross-marks corresponding to the roots in $\Phi$, yields a
complete graphic description of the minimal orbit $M$ 
(see \cite[\S 6]{AMN06}). We call the pair $(\mathcal{S},\Phi)$ the
{\it cross-marked Satake diagram} associated to $M$, or,
equivalently, to the
pair $(\mathfrak{g},\mathfrak{q})$.
\par\smallskip
An inclusion 
$\mathbf{Q}_{\Phi}\subset\mathbf{Q}_{\Phi'}$ 
of parabolic subgroups of $\Hat{\mathbf{G}}$ defines a natural
$\Hat{\mathbf{G}}$-equivariant fibration $\left(\Hat{\mathbf{G}}/
\mathbf{Q}_{\Phi}\right)\to
\left(\Hat{\mathbf{G}}/
\mathbf{Q}_{\Phi'}\right)$, yielding by restriction a
$\mathbf{G}$-equivariant fibration $M\to{M}'$ of the corresponding
minimal orbits. In the following proposition
we describe these 
$\mathbf{G}$-equivariant fibrations
in terms of the associated cross-marked Satake diagrams\,:
\begin{prop}\label{fibrazione}
Let $M$ and $M'$ be minimal orbits for the same $\mathbf{G}$,
associated to the pairs
$(\mathfrak{g},\mathfrak{q}_{\Phi})$
and $(\mathfrak{g},\mathfrak{q}_{\Phi'})$, 
respectively,
with $\Phi'\subset\Phi$. 
Let $\mathcal{E}$ be the set of all roots 
$\alpha\in\mathcal{B}$
with 
$\left(\{\alpha\}\cup\mathrm{supp}_{\mathcal{B}}
(\bar\alpha)\right)\cap\Phi'\neq
\emptyset$. Consider the Satake diagram
$\mathcal{S}''$ obtained from the Satake diagram $\mathcal{S}$ of 
$\mathfrak{g}$
by erasing all nodes corresponding to 
the set $\mathcal{E}$ and all lines and arrows issued from them.\par
Then the $\mathbf{G}$-equivariant fibration $M\to{M}'$
has connected fibers,
that are minimal orbits $M''$, corresponding to the 
cross-marked Satake diagram $(\mathcal{S}'',\Phi'')$, where
$\Phi''=\Phi\setminus\mathcal{E}$.\par
\end{prop}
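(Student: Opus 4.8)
The plan is to identify the fiber of $M\to M'$ over the base point with a minimal orbit in a smaller flag manifold, and then to read off its cross-marked Satake diagram. Since $\Phi'\subset\Phi$ gives $\mathbf{Q}_{\Phi}\subset\mathbf{Q}_{\Phi'}$, the fiber over the image $x'$ of $x$ is $\mathbf{G}_+'/\mathbf{G}_+$, with $\mathbf{G}_+'=\mathbf{Q}_{\Phi'}\cap\mathbf{G}$. From $\Phi'\subset\Phi$ one has $\mathfrak{q}_{\Phi'}^n\subset\mathfrak{q}_{\Phi}^n$; since the nilpotent ideal $\mathfrak{n}'$ of $\mathfrak{g}_+'$ lies in $\mathfrak{q}_{\Phi'}^n$, it is contained in $\mathfrak{q}_{\Phi}$, hence in $\mathfrak{g}_+$, so $\mathbf{N}'\subset\mathbf{G}_+$. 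Feeding this into the Chevalley decomposition \eqref{alfisdec} applied to $\mathbf{G}_+'=\mathbf{W}'\ltimes\mathbf{N}'$ collapses the fiber to $\mathbf{W}'/(\mathbf{W}'\cap\mathbf{Q}_{\Phi})=\mathbf{W}'\cdot x$, the orbit of the reductive part $\mathbf{W}'$ in the complex fiber $\Hat{M}''=\mathbf{Q}_{\Phi'}/\mathbf{Q}_{\Phi}$.

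Next I would identify $\Hat{M}''$. Using $\mathfrak{q}_{\Phi'}^n\subset\mathfrak{q}_{\Phi}$ one gets $\Hat{M}''\cong\mathbf{Q}_{\Phi'}^r/(\mathbf{Q}_{\Phi'}^r\cap\mathbf{Q}_{\Phi})$, the complex flag manifold of the semisimple part $\Hat{\mathbf{L}}'$ of the Levi factor $\mathbf{Q}_{\Phi'}^r$, whose Dynkin diagram is the subdiagram on $\mathcal{B}\setminus\Phi'$ with marking $\Phi\setminus\Phi'$. The center $\mathfrak{z}'$ of $\mathbf{W}'$ acts trivially on $\Hat{M}''$, so $\mathbf{W}'\cdot x$ is the orbit attached to the parabolic $CR$ algebra $(\mathfrak{l}',\Hat{\mathfrak{l}}'\cap\mathfrak{q}_{\Phi})$, where $\mathfrak{l}'=[\mathfrak{w}',\mathfrak{w}']$ is a real form of $\Hat{\mathfrak{l}}'$.

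I would then settle compactness and connectedness simultaneously. The fiber $F=\mathbf{W}'\cdot x$ is closed in the compact $M$, hence compact, and $\mathbf{W}'$ has finitely many components, so $F$ is a finite union of $(\mathbf{W}')^{0}$-orbits, each of dimension $\dim F$, hence open and therefore also closed in $F$, so each is compact. By \cite{Wolf69} applied to the connected real form $(\mathbf{W}')^{0}$ acting (through $(\mathbf{L}')^{0}$) on $\Hat{M}''$, there is a unique compact orbit; as each $w\in\mathbf{W}'$ normalizes $(\mathbf{W}')^{0}$, the orbit $w\,(\mathbf{W}')^{0}\,x=(\mathbf{W}')^{0}(wx)$ is compact and hence equals $(\mathbf{W}')^{0}x$. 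Thus $F=(\mathbf{W}')^{0}x$ is connected and is the minimal orbit $M''$ of $(\mathfrak{l}',\Hat{\mathfrak{l}}'\cap\mathfrak{q}_{\Phi})$.

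It remains to match $(\mathcal{S}'',\Phi'')$. The complexified reductive part $\Hat{\mathfrak{w}}'=\mathfrak{q}_{\Phi'}^r\cap\bar{\mathfrak{q}}_{\Phi'}^r$ is conjugation-invariant and contains $\Hat{\mathfrak{h}}$, so $\Hat{\mathfrak{g}}^{\alpha}\subset\Hat{\mathfrak{w}}'$ exactly when $\supp_{\mathcal{B}}(\alpha)$ and $\supp_{\mathcal{B}}(\bar\alpha)$ both miss $\Phi'$; for $\alpha\in\mathcal{B}$ this is precisely $\alpha\notin\mathcal{E}$. Hence the simple roots of $\Hat{\mathfrak{l}}'$ are $\mathcal{B}\setminus\mathcal{E}$ and the marking is $\Phi\cap(\mathcal{B}\setminus\mathcal{E})=\Phi\setminus\mathcal{E}=\Phi''$. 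Since $\mathfrak{h}$ is maximally noncompact in $\mathfrak{g}$ and the imaginary roots of $\Hat{\mathfrak{l}}'$ form a subset of those of $\Hat{\mathfrak{g}}$, they stay compact, so $\mathfrak{h}\cap\mathfrak{l}'$ is maximally noncompact in $\mathfrak{l}'$ and the conjugation, the order and the positivity conditions (1)--(2) restrict; therefore the cross-marked Satake diagram of $\mathfrak{l}'$ is exactly $(\mathcal{S}'',\Phi'')$. I expect this last, combinatorial, step to be the main obstacle: one must verify that every root with $\supp_{\mathcal{B}}(\alpha)$ and $\supp_{\mathcal{B}}(\bar\alpha)$ avoiding $\Phi'$ is actually supported on $\mathcal{B}\setminus\mathcal{E}$ --- so that $\mathcal{B}\setminus\mathcal{E}$ is genuinely a simple system --- and that the arrow-involution $\varepsilon$ preserves $\mathcal{B}\setminus\mathcal{E}$, both of which rest on the absence of cancellation among the $\bar\beta$ guaranteed by (1)--(2).
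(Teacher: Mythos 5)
Your strategy reaches the right conclusions, but it diverges from the paper's proof at both of its steps, and each divergence deserves a comment. For connectedness, the paper does not use compactness of the fiber at all: it observes that the Cartan subgroup $\mathbf{H}=\mathbf{Z}_{\mathbf{G}}(\mathfrak{h})$ sits inside $\mathbf{G}_+\subset\mathbf{G}'_+$, that $\mathfrak{h}$ is maximally noncompact in $\mathfrak{w}'$, and then invokes \cite[Proposition 7.90]{Kn:2002} to conclude that every connected component of $\mathbf{W}'$, hence of $\mathbf{G}'_+$, meets $\mathbf{H}$ and a fortiori $\mathbf{G}_+$, so that $\mathbf{G}'_+/\mathbf{G}_+$ is connected. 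Your alternative --- the fiber is compact, splits into finitely many open-and-closed $(\mathbf{W}')^0$-orbits, and Wolf's uniqueness of the closed orbit forces them to coincide --- is a legitimately different argument and avoids the component theory of reductive groups, but it obliges you to check that $(\mathbf{W}')^{0}$ acts on $\Hat{M}''$ through a connected real form of the semisimple Levi quotient and to justify (by invariance of domain) that each suborbit is open in the fiber. For the identification of the fiber with $(\mathcal{S}'',\Phi'')$, the paper simply cites \cite[Proposition 7.3]{AMN06}; the combinatorial verification you flag as ``the main obstacle'' (that $\mathcal{B}\setminus\mathcal{E}$ is a simple system for the root system of $\mathfrak{q}^r_{\Phi'}\cap\bar{\mathfrak{q}}^r_{\Phi'}$ and that the arrow-involution restricts) is exactly the argument given for Proposition \ref{essesecondo}, applied with $\Phi'$ in place of $\Phi$ (your $\mathcal{E}$ is the set $\Pi$ of Lemma \ref{modellodebole} built from $\Phi'$), so the gap is closable. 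One genuine slip to fix: the complexification of the nilpotent ideal $\mathfrak{n}'$ of $\mathfrak{g}'_+$ is $\left(\mathfrak{q}^n_{\Phi'}\cap\bar{\mathfrak{q}}_{\Phi'}\right)+\left(\mathfrak{q}_{\Phi'}\cap\bar{\mathfrak{q}}^n_{\Phi'}\right)$, which is in general strictly larger than $\mathfrak{q}^n_{\Phi'}$'s contribution, so your premise that $\mathfrak{n}'$ lies in $\mathfrak{q}^n_{\Phi'}$ is false as stated; the conclusion $\mathbf{N}'\subset\mathbf{G}_+$ survives because every root occurring in that sum is positive (a negative root of $\mathcal{Q}_{\Phi'}$ whose conjugate is a positive root of $\mathcal{Q}^n_{\Phi'}$ would have to be imaginary, which its support condition forbids), hence belongs to $\mathcal{Q}_{\Phi}$.
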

\begin{proof}
Let $\mathbf{H}=\mathbf{Z}_{\mathbf{G}}(\mathfrak{h})=
\{h\in\mathbf{G}\,|\, \mathrm{Ad}_{\mathfrak{g}}(h)(H)=H\,,\;
\forall H\in
\mathfrak{h}\}$ be the Cartan subgroup of
$\mathbf{G}$ corresponding to $\mathfrak{h}$. We have
$\mathrm{Ad}_{\Hat{\mathfrak{g}}}(h)(\mathfrak{q}_{\Phi})=\mathfrak{q}_{\Phi}$ 
and
$\mathrm{Ad}_{\Hat{\mathfrak{g}}}(h)(\mathfrak{q}_{\Phi'})=
\mathfrak{q}_{\Phi'}$ for all
$h\in \mathbf{H}$. Hence 
$\mathbf{H}\subset\mathbf{G}_+\subset\mathbf{G}'_+$,
where $\mathbf{G}_+=\mathbf{G}\cap\mathbf{Q}_{\Phi}$ and
$\mathbf{G}_+'=\mathbf{G}\cap\mathbf{Q}_{\Phi'}$.
We decompose $\mathbf{G}'_+=\mathbf{W}'\rtimes\mathbf{N}'$
according to \eqref{alfisdec}, with a $\mathbf{W}'$ that
satisfies \eqref{trequattro}. Then $\mathbf{H}\subset\mathbf{W}'$ and,
since $\mathfrak{h}$ is maximally
noncompact in $\mathfrak{g}_+'$, it is also maximally noncompact in
$\mathfrak{w}'=\mathrm{Lie}(\mathbf{W}')$. Thus,
by \cite[Proposition 7.90]{Kn:2002}, 
all connected components of $\mathbf{W}'$, and hence also of
$\mathbf{G}'_+$, 
intersect $\mathbf{H}$
and, a fortiori, $\mathbf{G}_+$. 
Therefore the fiber $\mathbf{G}'_+/\mathbf{G}_+$ is connected.
\par
The fact that $M''$ is the minimal orbit associated to $(\mathcal{S}'',
\Phi'')$ is the contents of \cite[Proposition 7.3]{AMN06}.
\end{proof}
In the following two lemmata we give 
sufficient conditions,
in terms of cross-marked Satake diagrams, in order that
two minimal orbits be diffeomorphic.
\begin{lem}\label{modellodebole}
We keep the notation introduced above. Let\,:
\begin{align}
\Pi=\Phi\cup\{\alpha\in\mathcal{B}\setminus\mathcal{R}_{\bullet}\,|\,
{\mathrm{supp}}(\bar\alpha)\cap\Phi\neq\emptyset\}\,,
\end{align}
and let $M^*$ be the minimal orbit corresponding to 
$(\mathfrak{g},\mathfrak{q}_{\Pi})$. Then
the canonical $\mathbf{G}$-equivariant map 
$M^*\to{M}$ is a diffeomorphism.
\end{lem}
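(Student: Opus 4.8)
The plan is to identify $M^{*}\to{M}$ with one of the $\mathbf{G}$-equivariant fibrations of Proposition~\ref{fibrazione} and then to show that its fibre is reduced to a point. Since $\Phi\subseteq\Pi$, the description of $\mathcal{Q}_{\Phi}$ recalled in \S\ref{flagsec} gives $\mathcal{Q}_{\Pi}\subseteq\mathcal{Q}_{\Phi}$, hence $\mathfrak{q}_{\Pi}\subseteq\mathfrak{q}_{\Phi}$ and $\mathbf{Q}_{\Pi}\subseteq\mathbf{Q}_{\Phi}$; this inclusion induces the canonical $\mathbf{G}$-equivariant map $M^{*}\to{M}$. I would therefore apply Proposition~\ref{fibrazione} with $(\mathfrak{g},\mathfrak{q}_{\Pi})$ playing the role of $(\mathfrak{g},\mathfrak{q}_{\Phi})$ and $(\mathfrak{g},\mathfrak{q}_{\Phi})$ that of $(\mathfrak{g},\mathfrak{q}_{\Phi'})$, so that $M^{*}\to{M}$ has connected fibres, diffeomorphic to the minimal orbit $M''$ attached to the cross-marked Satake diagram $(\mathcal{S}'',\Phi'')$.

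The crucial point is the combinatorial identity $\Phi''=\emptyset$. In the present roles the exceptional set is $\mathcal{E}=\{\alpha\in\mathcal{B}\mid(\{\alpha\}\cup\mathrm{supp}_{\mathcal{B}}(\bar\alpha))\cap\Phi\neq\emptyset\}$ and $\Phi''=\Pi\setminus\mathcal{E}$. I would check directly that $\Pi\subseteq\mathcal{E}$: if $\alpha\in\Phi$ then $\{\alpha\}\cap\Phi\neq\emptyset$, while if $\alpha\in\mathcal{B}\setminus\mathcal{R}_{\bullet}$ has $\mathrm{supp}(\bar\alpha)\cap\Phi\neq\emptyset$ then a fortiori $(\{\alpha\}\cup\mathrm{supp}_{\mathcal{B}}(\bar\alpha))\cap\Phi\neq\emptyset$; in either case $\alpha\in\mathcal{E}$. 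Comparing with the definition of $\Pi$ this yields $\Pi\subseteq\mathcal{E}$, and hence $\Phi''=\emptyset$.

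It then suffices to remark that a minimal orbit whose cross-marked Satake diagram carries no cross is a single point: with $\Phi''=\emptyset$ the parabolic set of $\mathcal{S}''$ is the whole of its root system, so $\mathfrak{q}_{\Phi''}$ is the entire reductive algebra associated with $\mathcal{S}''$, the corresponding flag manifold is a point, and so is $M''$. Equivalently, writing $\mathbf{G}_{+}=\mathbf{G}\cap\mathbf{Q}_{\Phi}$ and $\mathbf{G}_{+}^{*}=\mathbf{G}\cap\mathbf{Q}_{\Pi}$, the connected fibre $\mathbf{G}_{+}/\mathbf{G}_{+}^{*}$ is a point, so $\mathbf{G}_{+}=\mathbf{G}_{+}^{*}$ and $M^{*}\to{M}$ is a bijective submersion, that is, a diffeomorphism.

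I do not expect a genuine obstacle beyond Proposition~\ref{fibrazione} itself: once that fibration statement is granted, everything reduces to the bookkeeping inclusion $\Pi\subseteq\mathcal{E}$. The only subtlety worth a second look is that the second clause defining $\Pi$ ranges only over $\mathcal{B}\setminus\mathcal{R}_{\bullet}$ and uses the condition $\mathrm{supp}(\bar\alpha)\cap\Phi\neq\emptyset$, whereas $\mathcal{E}$ uses the weaker condition on $\{\alpha\}\cup\mathrm{supp}_{\mathcal{B}}(\bar\alpha)$; it is exactly this weakening that makes the inclusion (rather than an equality) hold and that suffices to force $\Phi''=\emptyset$.
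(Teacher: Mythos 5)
Your proposal is correct and follows exactly the paper's own route: the paper's proof is a one-line appeal to Proposition~\ref{fibrazione}, asserting that the fibre of $M^{*}\to M$ reduces to a point. You have merely made explicit the bookkeeping the paper leaves implicit, namely that with $\Pi$ and $\Phi$ in the roles of $\Phi$ and $\Phi'$ one gets $\Pi\subseteq\mathcal{E}$ (in fact $\mathcal{E}=\Pi$, since $\mathrm{supp}_{\mathcal{B}}(\bar\alpha)=\{\alpha\}$ for imaginary simple $\alpha$), hence $\Phi''=\emptyset$ and the fibre is a point.
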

\begin{proof} By Proposition \ref{fibrazione}, $M^*\to{M}$ 
is a $\mathbf{G}$-equivariant fibration whose fiber reduces to
a point, and hence a diffeomorphism. \end{proof}
From this Lemma we obtain\,:
\begin{lem}\label{lemdiffeom} We keep the notation introduced above.
Let $M_1$, $M_2$ be minimal orbits associated to pairs 
$(\mathfrak{g},\mathfrak{q}_{\Phi_1})$, $(\mathfrak{g},\mathfrak{q}_{\Phi_2})$,
respectively, for the same semisimple real Lie algebra $\mathfrak{g}$,
and with suitable $\Phi_1,\Phi_2\subset\mathcal{B}$.
Let 
\begin{align*}
\Pi_1=\Phi_1\cup\{\alpha\in\mathcal{B}\,|\,
{\mathrm{supp}}(\bar\alpha)\cap\Phi_1\neq
\emptyset\}\,,\\
\Pi_2=\Phi_2\cup\{\alpha\in\mathcal{B}\,|\,
{\mathrm{supp}}(\bar\alpha)\cap\Phi_2\neq
\emptyset\}\,.\end{align*}
If $\Pi_1=\Pi_2$, 
then there is a $\mathbf{G}$-equivariant diffeomorphism $M_1\to{M}_2$. 
\end{lem}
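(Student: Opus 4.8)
The plan is to deduce this lemma directly from Lemma \ref{modellodebole} by recognizing each $\Pi_i$ as the weak model of the corresponding $\Phi_i$. Concretely, I would write $M_1^*$ and $M_2^*$ for the minimal orbits attached to the pairs $(\mathfrak{g},\mathfrak{q}_{\Pi_1})$ and $(\mathfrak{g},\mathfrak{q}_{\Pi_2})$. If $\Pi_i$ coincides with the set $\Pi$ produced from $\Phi_i$ in Lemma \ref{modellodebole}, then that lemma furnishes canonical $\mathbf{G}$-equivariant diffeomorphisms $f_1\colon M_1^*\to M_1$ and $f_2\colon M_2^*\to M_2$. Since a minimal orbit is completely determined by its defining pair $(\mathfrak{g},\mathfrak{q})$, and $\Pi_1=\Pi_2$ by hypothesis forces $\mathfrak{q}_{\Pi_1}=\mathfrak{q}_{\Pi_2}$ and hence $M_1^*=M_2^*$, the composition $f_2\circ f_1^{-1}\colon M_1\to M_2$ will be the desired $\mathbf{G}$-equivariant diffeomorphism.

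The one point that requires an argument is the apparent discrepancy between the two definitions of the weak model: in Lemma \ref{modellodebole} the enlargement ranges over $\alpha\in\mathcal{B}\setminus\mathcal{R}_{\bullet}$, whereas here the sets $\Pi_i$ are formed by letting $\alpha$ range over all of $\mathcal{B}$. I claim the two prescriptions give the same set, so that the only roots at issue are the imaginary simple roots $\alpha\in\mathcal{B}\cap\mathcal{R}_{\bullet}$. For such an $\alpha$ one has $\bar\alpha=-\alpha$, so $\mathrm{supp}(\bar\alpha)=\{\alpha\}$, and the condition $\mathrm{supp}(\bar\alpha)\cap\Phi_i\neq\emptyset$ reduces to $\alpha\in\Phi_i$. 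Hence every imaginary simple root meeting the enlargement condition already belongs to $\Phi_i$ and contributes nothing new to the union. Consequently the set $\Pi_i$ of the present lemma equals the set $\Pi$ of Lemma \ref{modellodebole} applied to $\Phi_i$.

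With this identification in hand the proof is essentially complete, reducing to the composition of the two diffeomorphisms above. I do not expect a genuine obstacle: the substance of the result is carried entirely by Lemma \ref{modellodebole}, whose proof in turn rests on the fiber-connectedness and fiber-computation of Proposition \ref{fibrazione}. The only care needed is the bookkeeping with imaginary roots, which guarantees that the combinatorial datum $\Pi_i$ really is the weak model, together with the remark that equality $\Pi_1=\Pi_2$ of combinatorial data forces the literal equality $M_1^*=M_2^*$ of the associated minimal orbits.
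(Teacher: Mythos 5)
Your proposal is correct and follows exactly the paper's own argument: the paper's proof is precisely the chain $M_1\xleftarrow{\sim}M_1^*=M_2^*\xrightarrow{\sim}M_2$ obtained by applying Lemma \ref{modellodebole} to each $\Phi_i$. Your additional remark reconciling the two formulations of $\Pi$ (that imaginary simple roots contribute nothing new since $\bar\alpha=-\alpha$ forces $\mathrm{supp}(\bar\alpha)=\{\alpha\}$) is a correct piece of bookkeeping that the paper leaves implicit.
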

\begin{proof} Indeed, by Lemma \ref{modellodebole}, we have 
a chain of $\mathbf{G}$-equivariant diffeomorphisms
$\begin{CD} M_1@<\sim<< M_1^*@=M_2^*@>\sim>> M_2\,. \end{CD}$
\end{proof}
We also have\,:
\begin{prop}\label{essesecondo}
We keep the notation introduced above. 
By erasing all nodes corresponding to roots in $\Pi$ and all
lines and arrows issuing from them, we obtain a new Satake diagram
$\mathcal{S}''_{\Phi}$, that is the Satake diagram of a Levi
subalgebra $\mathfrak{l}$ of $\mathfrak{g}_+=\mathfrak{q}\cap\mathfrak{g}$.
Then
$\mathcal{R}''_{\Phi}=\mathcal{Q}^r_{\Phi}\cap\bar{\mathcal{Q}}^r_{\Phi}$
is the root system of the complexification $\Hat{\mathfrak{l}}$ of
$\mathfrak{l}$ with respect to 
its Cartan subalgebra $\Hat{\mathfrak{h}}\cap\Hat{\mathfrak{l}}$, that is
the complexification
of the maximally noncompact Cartan subalgebra
$\mathfrak{h}\cap\mathfrak{l}$~of~$\mathfrak{l}$.
\end{prop}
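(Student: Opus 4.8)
The plan is to first pin down the root system of $\Hat{\mathfrak{l}}$ and only afterwards translate everything into the Satake diagram. In the proof of the decomposition $\mathbf{G}_+=\mathbf{W}\rtimes\mathbf{N}$ it was shown that the complexification $\Hat{\mathfrak{w}}$ of the reductive factor $\mathfrak{w}$ is the reductive part of the parabolic $\mathfrak{q}^n_\Phi\oplus(\mathfrak{q}_\Phi\cap\bar{\mathfrak{q}}_\Phi)$, that is its symmetric summand $\mathfrak{q}^r_\Phi\cap\bar{\mathfrak{q}}^r_\Phi$; hence $\Hat{\mathfrak{w}}$ has root system $\mathcal{Q}^r_\Phi\cap\bar{\mathcal{Q}}^r_\Phi=\mathcal{R}''_\Phi$ with respect to $\Hat{\mathfrak{h}}$. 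Since $\mathfrak{w}$ is the centralizer of $\mathfrak{z}$ in $\mathfrak{g}$ by \eqref{trequattro} and $\mathfrak{h}\supseteq\mathfrak{z}$ is abelian, we have $\mathfrak{h}\subseteq\mathfrak{w}$, so $\mathfrak{h}$ is a Cartan subalgebra of the reductive $\mathfrak{w}$ and $\mathfrak{h}=(\mathfrak{h}\cap\mathfrak{l})\oplus\mathfrak{z}$ with $\mathfrak{h}\cap\mathfrak{l}$ a Cartan subalgebra of $\mathfrak{l}=[\mathfrak{w},\mathfrak{w}]$; it is maximally noncompact because $\mathfrak{l}$ is $\vartheta$-invariant and every imaginary root of $\mathcal{R}''_\Phi$ is an imaginary root of $\mathcal{R}$, hence compact. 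As the center $\Hat{\mathfrak{z}}$ lies in $\Hat{\mathfrak{h}}$ and carries no root, the nonzero root spaces of $\Hat{\mathfrak{w}}$ all lie in $\Hat{\mathfrak{l}}=[\Hat{\mathfrak{w}},\Hat{\mathfrak{w}}]$, and restriction to $\Hat{\mathfrak{h}}\cap\Hat{\mathfrak{l}}=\mathbb{C}\otimes(\mathfrak{h}\cap\mathfrak{l})$ identifies $\mathcal{R}''_\Phi$ with the root system of $\Hat{\mathfrak{l}}$; this already gives the second assertion.

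The step I expect to be the main obstacle is the purely combinatorial identification of $\mathcal{B}\setminus\Pi$ with the set of simple roots of $\mathcal{R}''_\Phi$ for the order induced by $\prec$. One inclusion is immediate: if $\beta\in\mathcal{B}\setminus\Pi$ then $\beta\notin\Phi$ and $\mathrm{supp}(\bar\beta)\cap\Phi=\emptyset$, so both $\beta$ and $\bar\beta$ have support off $\Phi$, i.e.\ $\beta\in\mathcal{R}''_\Phi$; being $\mathcal{R}$-simple it is a fortiori $\mathcal{R}''_\Phi$-simple. For the converse I would take a positive root $\gamma=\sum_{\beta\in\mathcal{B}\setminus\Phi}k^\beta_\gamma\beta\in\mathcal{R}''_\Phi$ (its coefficients on $\Phi$ vanish since $\mathrm{supp}(\gamma)\cap\Phi=\emptyset$) and show $k^\beta_\gamma=0$ for all $\beta\in\Pi\setminus\Phi$. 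The point is that $\Pi\setminus\Phi$ consists only of complex simple roots, for which $\bar\beta=\varepsilon(\beta)+\sum_{\delta\in\mathcal{B}\cap\mathcal{R}_\bullet}t^\delta_\beta\delta$ with all $t^\delta_\beta\geq0$, since $\bar\beta\succ0$. Expanding $\bar\gamma=\sum_\beta k^\beta_\gamma\bar\beta$ and imposing $\mathrm{supp}(\bar\gamma)\cap\Phi=\emptyset$ forces, for each $\phi\in\Phi$, the nonnegative sum giving the coefficient $k^\phi_{\bar\gamma}$ of $\phi$ in $\bar\gamma$ to vanish term by term. Evaluating this for a complex $\phi=\varepsilon(\beta)\in\Phi$ and for an imaginary $\phi\in\Phi$ with $t^\phi_\beta>0$ exhausts the two ways in which $\mathrm{supp}(\bar\beta)$ can meet $\Phi$, and in each case yields $k^\beta_\gamma=0$. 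Thus $\gamma$ is supported on $\mathcal{B}\setminus\Pi$, and the two inclusions together identify the simple roots of $\mathcal{R}''_\Phi$ with $\mathcal{B}\setminus\Pi$, so that the Dynkin bonds of $\mathcal{S}''_\Phi$ are those inherited from $\mathcal{R}$.

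Finally I would match the Satake decorations, so that erasing the nodes of $\Pi$ together with their edges and arrows reproduces exactly the diagram of $\mathfrak{l}$. Since the conjugation of $\Hat{\mathfrak{l}}$ is the restriction of that of $\Hat{\mathfrak{g}}$ and $\mathfrak{l}$ is the corresponding real form, a node $\beta\in\mathcal{B}\setminus\Pi$ is black for $\mathfrak{l}$ precisely when it is imaginary, hence compact, for $\mathfrak{g}$, i.e.\ precisely the surviving black nodes of $\mathcal{S}$. For the arrows I would verify that $\varepsilon$ preserves $\mathcal{B}\setminus\Pi$: applying the conjugation to $\bar\beta=\varepsilon(\beta)+\sum_\delta t^\delta_\beta\delta$ gives $\overline{\varepsilon(\beta)}=\beta+\sum_\delta t^\delta_\beta\delta$, so $\mathrm{supp}(\overline{\varepsilon(\beta)})=\{\beta\}\cup(\mathrm{supp}(\bar\beta)\setminus\{\varepsilon(\beta)\})$ avoids $\Phi$ whenever $\beta\in\mathcal{B}\setminus\Pi$; with $\varepsilon(\beta)\notin\Phi$ this places $\varepsilon(\beta)$ in $\mathcal{B}\setminus\Pi$, and the imaginary roots occurring in $\bar\beta$ lie there as well. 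Hence the involution and the black-node set of $\mathfrak{l}$ are the restrictions to $\mathcal{B}\setminus\Pi$ of those of $\mathfrak{g}$, which is exactly the decorated subdiagram $\mathcal{S}''_\Phi$, completing the identification of $\mathcal{S}''_\Phi$ with the Satake diagram of $\mathfrak{l}$.
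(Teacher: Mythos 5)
Your proof is correct and follows essentially the same route as the paper: both identify $\mathcal{R}''_{\Phi}$ as the root system of the reductive part of the auxiliary parabolic $\mathfrak{q}^n_{\Phi}+(\mathfrak{q}_{\Phi}\cap\bar{\mathfrak{q}}_{\Phi})$, whose derived algebra is $\Hat{\mathfrak{l}}$, and then show by a support argument on the conjugates of simple roots that $\mathcal{B}\setminus\Pi$ is a simple basis for it. The only organizational difference is that the paper first replaces $\Phi$ by $\Pi$ (using that $\mathfrak{g}_+$, hence $\mathfrak{l}$, depends only on $\Pi$), which turns your term-by-term positivity argument into the one-line observation that $\mathrm{supp}_{\mathcal{B}}(\bar\alpha)\subset\bigcup_{\beta\in\mathrm{supp}_{\mathcal{B}}(\alpha)}\mathrm{supp}_{\mathcal{B}}(\bar\beta)$; your extra care with the maximal noncompactness of $\mathfrak{h}\cap\mathfrak{l}$ and with the surviving arrows and black nodes spells out details the paper leaves implicit.
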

\begin{proof}
Since $\mathfrak{q}_{\Pi}\cap\mathfrak{g}=\mathfrak{q}_{\Phi}\cap\mathfrak{g}
=\mathfrak{g}_+$, we can as well assume that $\Phi=\Pi$. The
intersection 
$\mathfrak{w}=\mathfrak{q}^r\cap\bar{\mathfrak{q}}^r\cap\mathfrak{g}$
is a reductive complement of the nilradical of $\mathfrak{g}_+$
and its semisimple ideal $\mathfrak{l}=[\mathfrak{w},\mathfrak{w}]$ is
a Levi subalgebra of $\mathfrak{g}_+$. The associated root system
of $\mathfrak{q}^r\cap\bar{\mathfrak{q}}^r$, with respect to
$\Hat{\mathfrak{h}}$, 
and of $\Hat{\mathfrak{l}}$ with respect to its Cartan subalgebra
$\Hat{\mathfrak{h}}\cap\Hat{\mathfrak{l}}$, is 
$\mathcal{Q}^r_{\Pi}\cap\bar{\mathcal{Q}}^r_{\Pi}$.
We observe that $\bar\alpha\in\mathcal{Q}^r_{\Pi}$ for all
simple $\alpha\in\mathcal{B}\setminus\Pi$. Hence, for
$\alpha\in\mathcal{Q}^r_{\Pi}$, also $\bar\alpha\in\mathcal{Q}^r_{\Pi}$,
because $\mathrm{supp}_{\mathcal{B}}(\bar\alpha)\subset\bigcup_{\beta\in
\mathrm{supp}_{\mathcal{B}}(\alpha)}\mathrm{supp}_{\mathcal{B}}(\bar\beta)$
and hence, when $\mathrm{supp}_{\mathcal{B}}(\alpha)\cap\Pi=\emptyset$,
also $\mathrm{supp}_{\mathcal{B}}(\bar\alpha)\cap\Pi=\emptyset$.
This shows that $\mathcal{Q}^r_{\Pi}=\bar{\mathcal{Q}}^r_{\Pi}$ and
that
$\mathrm{supp}_{\mathcal{B}}(\alpha)\subset\mathcal{B}
\setminus\Pi$ for all 
$\alpha\in\mathcal{Q}^r_{\Pi}$. Since $\mathcal{B}\setminus\Pi\subset
\mathcal{Q}^r_{\Pi}$, we proved that $\mathcal{B}\setminus\Pi$ is a
system of simple roots for $\mathcal{R}''_{\Phi}=\mathcal{Q}^r_{\Pi}=
\mathcal{Q}^r_{\Pi}\cap\bar{\mathcal{Q}}^r_{\Pi}$.
Since the nodes of $\mathcal{S}''_{\Phi}$ are exactly those corresponding
to the simple roots in $\mathcal{B}\setminus\Pi$,
this proves our contention.
\end{proof} 
From Lemma \ref{lemdiffeom}, we obtain in particular\,:
\begin{prop} If $\mathfrak{g}$ is a simple Lie algebra 
of the complex type,
then every minimal orbit $M$ of $\mathbf{G}$ 
is diffeomorphic to a complex flag manifold.
\end{prop}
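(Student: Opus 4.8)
The plan is to use Lemma~\ref{lemdiffeom} to replace the given cross-marking by an $\varepsilon$-invariant one, and then to identify the resulting minimal orbit directly with a complex flag manifold. Recall first the structure of the complex type: here $\mathfrak{g}$ is the realification of a complex simple Lie algebra $\mathfrak{g}_0$, so that its complexification splits as $\Hat{\mathfrak{g}}=\mathfrak{g}_0\oplus\bar{\mathfrak{g}}_0$ into two simple ideals interchanged by the conjugation $\sigma$ of $\Hat{\mathfrak{g}}$ with respect to $\mathfrak{g}$. Consequently $\mathcal{R}$ is the disjoint union of the two root systems of the ideals, $\sigma$ carries one onto the other, and no root is fixed by $\sigma$ or sent to its opposite: there are neither imaginary nor real roots, $\mathcal{R}_{\bullet}=\emptyset$, and the Satake diagram of $\mathfrak{g}$ consists of two copies of the Dynkin diagram of $\mathfrak{g}_0$ whose nodes are paired by the arrows of $\varepsilon$. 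In particular $\bar\alpha=\varepsilon(\alpha)$ for every $\alpha\in\mathcal{B}$, so that $\mathrm{supp}(\bar\alpha)=\{\varepsilon(\alpha)\}$.

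First I would carry out the reduction. Given $\Phi\subset\mathcal{B}$, the completion appearing in Lemma~\ref{lemdiffeom} is
\begin{equation*}
\Pi=\Phi\cup\{\alpha\in\mathcal{B}\,|\,\mathrm{supp}(\bar\alpha)\cap\Phi\neq\emptyset\}=\Phi\cup\varepsilon(\Phi),
\end{equation*}
which is $\varepsilon$-invariant, since $\varepsilon$ is an involution. Applying Lemma~\ref{lemdiffeom} to $\Phi_1=\Phi$ and $\Phi_2=\Pi$, the two completions both equal $\Pi$ (the completion of an $\varepsilon$-invariant set being the set itself, as $\varepsilon(\Pi)=\Pi$), whence a $\mathbf{G}$-equivariant diffeomorphism $M\cong M_{\Pi}$, where $M_{\Pi}$ is the minimal orbit of $(\mathfrak{g},\mathfrak{q}_{\Pi})$. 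It therefore suffices to treat the case in which the cross-marked set is $\varepsilon$-invariant, i.e. $\Phi=\varepsilon(\Phi)$.

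Assume then $\Phi=\varepsilon(\Phi)$. Writing $\mathcal{B}=\mathcal{B}_{0}\sqcup\varepsilon(\mathcal{B}_{0})$ according to the two ideals, $\varepsilon$-invariance forces $\Phi=\Phi_{0}\sqcup\varepsilon(\Phi_{0})$ for some $\Phi_{0}\subset\mathcal{B}_{0}$; hence $\mathfrak{q}_{\Phi}=\mathfrak{p}\oplus\sigma(\mathfrak{p})$, where $\mathfrak{p}\subset\mathfrak{g}_0$ is the parabolic subalgebra determined by $\Phi_{0}$. Thus $\mathfrak{q}_{\Phi}$ is $\sigma$-stable, so $\mathfrak{g}_+=\mathfrak{q}_{\Phi}\cap\mathfrak{g}$ is a real form of $\mathfrak{q}_{\Phi}$, and under the $\R$-linear isomorphism $\mathfrak{g}\cong\mathfrak{g}_0$ given by projection onto the first ideal it is carried isomorphically onto the realification of $\mathfrak{p}$. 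Consequently $\mathbf{G}_+$ has Lie algebra the realification of $\mathfrak{p}$, and, parabolic subgroups being connected and self-normalizing, $\mathbf{G}_+=\mathbf{P}$, the parabolic subgroup of $\mathbf{G}\cong\mathbf{G}_0$ with Lie algebra $\mathfrak{p}$. Therefore $M_{\Phi}=\mathbf{G}/\mathbf{G}_+=\mathbf{G}_0/\mathbf{P}$ is a complex flag manifold, as desired.

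I expect the main point, rather than a genuine obstacle, to be the clean algebraic identification of the last paragraph: recognizing that $\varepsilon$-invariance of the cross-marking is exactly the condition $\sigma(\mathfrak{q}_{\Phi})=\mathfrak{q}_{\Phi}$, equivalently that $(\mathfrak{g},\mathfrak{q}_{\Phi})$ be \emph{totally real}, and that in the complex type a totally real parabolic $CR$ algebra is nothing but the realification of a complex parabolic, so that its minimal orbit is literally the complex flag manifold $\mathbf{G}_0/\mathbf{P}$. The only care needed is at the group level, in passing from the equality of Lie algebras to $\mathbf{G}_+=\mathbf{P}$; this is immediate once one invokes that the parabolic subgroups of $\mathbf{G}_0$ are connected and coincide with their own normalizers.
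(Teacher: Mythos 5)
Your proof is correct, and it follows the paper's strategy in its first half but diverges in the endgame. Both arguments hinge on Lemma~\ref{lemdiffeom}: the observation that one may replace $\Phi$ by any other cross-marking with the same completion $\Pi=\Phi\cup\varepsilon(\Phi)$. The paper, however, chooses the \emph{one-sided} representative $\Phi'=\{\alpha'_j\,|\,j\in J\}\subset\mathcal{B}'$ (cross-marks only on one of the two halves of the Satake diagram) and then cites \cite[Theorem 10.2]{AMN06} to conclude that the corresponding minimal orbit is a complex manifold, hence a complex flag manifold; you instead choose the \emph{symmetric} representative $\Pi=\Phi\cup\varepsilon(\Phi)$, observe that $\mathfrak{q}_{\Pi}=\mathfrak{p}\oplus\sigma(\mathfrak{p})$ is $\sigma$-stable, and identify $\mathbf{G}_+$ directly with the parabolic subgroup $\mathbf{P}\subset\mathbf{G}_0$ via the projection onto the first ideal, so that $M\cong\mathbf{G}_0/\mathbf{P}$. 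The two representatives are interchangeable precisely because their completions coincide, and in fact they describe the same diffeomorphism class from two viewpoints: in the paper's picture $\mathbf{G}\cong\Hat{\mathbf{G}}_1$ acts transitively on $\Hat{\mathbf{G}}_1/\mathbf{P}_1\times\{\mathrm{pt}\}$, while in yours the orbit is exhibited as totally real in $\Hat{\mathbf{G}}_1/\mathbf{P}_1\times\Hat{\mathbf{G}}_2/\mathbf{P}_2$. What your route buys is self-containedness: you avoid the appeal to \cite[Theorem 10.2]{AMN06} at the cost of the short group-level verification that $\mathbf{G}_+=\mathbf{N}_{\mathbf{G}}(\mathfrak{q}_{\Pi})$ corresponds to $\mathbf{N}_{\mathbf{G}_0}(\mathfrak{p})=\mathbf{P}$ under the identification $\mathbf{G}\cong\mathbf{G}_0$ --- a point you correctly flag and settle using \eqref{qnormal}.
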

\begin{proof} The Satake diagram of $\mathfrak{g}$ consists of two disjoint
connected graphs, whose nodes correspond to two
sets of simple roots, each root of one 
set being strongly orthogonal to all roots of the other,
$\mathcal{B}'=\{\alpha'_1,\hdots,\alpha'_{\ell}\}$ and
$\mathcal{B}''=\{\alpha''_1,\hdots,\alpha''_{\ell}\}$, with curved arrows
joining $\alpha'_j$ to $\alpha''_j$. 
Let $J\subset
\{1,\hdots,\ell\}$ be the set of indices for which either $\alpha'_j$ or
$\alpha_j''$ are cross-marked, i.e. belongs to $\Phi\subset\mathcal{B}=
\mathcal{B}'\cup\mathcal{B}''$. By Lemma \ref{lemdiffeom},
our $M$ is diffeomorphic to the $M'$ corresponding to the parabolic
$\mathfrak{q}_{\Phi'}$ with
$\Phi'=\{\alpha'_j\,|\, j\in{J}\}$.
By \cite[Theorem 10.2]{AMN06},
$M'$ is complex and, hence, a complex flag manifold.
\end{proof}

\section{Euler characteristic of minimal orbits}\label{geneulero}
Let $M=\mathbf{K}/\mathbf{K}_+$ be a homogeneous space for the transitive
action of a compact connected Lie group $\mathbf{K}$.
It is known
(see e.g.
\cite[p.182]{MR0336651})
that 
its Euler characteristic $\chi(M)$ is nonnegative. Moreover,
it is positive exactly when the rank of the isotropy subgroup
$\mathbf{K}_+$ equals the rank of
$\mathbf{K}$. 
In this case the identity component $\mathbf{K}_+^0$ of
the isotropy $\mathbf{K}_+$ contains the center of $\mathbf{K}$
and hence $\tilde{M}=\mathbf{K}/\mathbf{K}_+^0$ is the universal
covering of $M$.
Indeed, we can reduce to the case of a semisimple $\mathbf{K}$
and thus assume that $\mathbf{K}$ is simply connected.
The number of sheets of $\tilde{M}\to{M}$
equals then the order $|\pi_1(M)|$ of the fundamental group of
$M$. 
By using e.g.
\cite[Ch.VII, Theorem 3.13]{MR1122592}, we obtain that\,:
\begin{align}
\chi(\tilde{M})&=\frac{|\mathbf{W}(\mathbf{K})|}{
|\mathbf{W}(\mathbf{K}_+^0)|}\,,\\
\chi(M)&=\frac{|\mathbf{W}(\mathbf{K})|}{|\mathbf{W}(\mathbf{K}_+^0)|\cdot
|\pi_1(M)|}\,.
\end{align}
\medskip\par
We have the following\,:
\begin{prop}\label{prop-rank}
Let $M=\mathbf{G}/\mathbf{G}_+=
\mathbf{K}/\mathbf{K}_+$, as in \eqref{cappa},
be a minimal orbit. Then $\mathbf{K}_+=\mathbf{K}\cap\mathbf{G}_+$
is a maximal compact
subgroup of $\mathbf{G}_+$, contained in
the maximal
compact subgroup
$\mathbf{K}$ of $\mathbf{G}$, and
the following are equivalent:
\begin{enumerate}
\item $\chi(M)>0$;
\item
$\rk(\mathbf{K})=\rk(\mathbf{K}_+)$,
i.e. $\mathbf{K}_+$ contains a maximal torus of $\mathbf{K}$;
\item
$\g_+$ contains a maximally compact Cartan subalgebra of
$\g$.
\end{enumerate}
\end{prop}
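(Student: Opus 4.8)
The plan is to dispose of the auxiliary claim about $\mathbf{K}_+$ first, then read off the equivalence (1)$\Leftrightarrow$(2) from the general fact recalled at the head of this section, and finally prove the equivalence (2)$\Leftrightarrow$(3) by an explicit Cartan-subalgebra construction carried out inside $\g_+$.

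First I would record that the decomposition $\g_+=\mathfrak{n}\oplus\mathfrak{l}\oplus\mathfrak{z}$ of Proposition \ref{ccaa} is $\vartheta$-stable, so that $\g_+$ is a $\vartheta$-stable subalgebra and $\mathbf{G}_+$ is a $\Theta$-stable closed subgroup with finitely many components (it is semi-algebraic). Indeed $\mathbf{W}=\mathbf{Z}_{\mathbf{G}}(\mathfrak{z})$ is $\Theta$-stable because $\vartheta(\mathfrak{z})=\mathfrak{z}$, and $\mathbf{N}$ is $\Theta$-stable because it is connected with $\vartheta$-stable Lie algebra $\mathfrak{n}$; hence so is $\mathbf{G}_+=\mathbf{W}\rtimes\mathbf{N}$. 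From the global Cartan decomposition of $\mathbf{G}$ one then obtains $\mathbf{G}_+=(\mathbf{G}_+\cap\mathbf{K})\exp(\g_+\cap\mathfrak{p})$, which exhibits $\mathbf{K}_+=\mathbf{K}\cap\mathbf{G}_+$ as a maximal compact subgroup of $\mathbf{G}_+$. For later use I would also note that, since the elements of $\mathfrak{n}$ are $\mathrm{ad}$-nilpotent while those of $\gk$ are $\mathrm{ad}$-semisimple, one has $\mathfrak{n}\cap\gk=0$, and therefore $\mathfrak{k}_+:=\mathrm{Lie}(\mathbf{K}_+)=\g_+\cap\gk=\mathfrak{w}\cap\gk$. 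The equivalence (1)$\Leftrightarrow$(2) is then immediate from the Hopf--Samelson result recalled above: for the compact homogeneous space $M=\mathbf{K}/\mathbf{K}_+$ one has $\chi(M)>0$ exactly when $\rk(\mathbf{K}_+)=\rk(\mathbf{K})$.

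The heart of the argument is (2)$\Leftrightarrow$(3), and here I would use the characterization that a $\vartheta$-stable Cartan subalgebra $\mathfrak{c}$ of $\g$ is maximally compact if and only if its compact part $\mathfrak{c}\cap\gk$ is a maximal abelian subalgebra of $\gk$, of dimension $\rk(\gk)$. For (3)$\Rightarrow$(2): given a maximally compact Cartan subalgebra $\mathfrak{c}\subseteq\g_+$, I may assume it $\vartheta$-stable after conjugating inside $\mathbf{G}_+$ (its semisimple elements are $\mathbf{N}$-conjugate into $\mathfrak{w}$, where $\vartheta$-stability is arranged by a further $\mathbf{W}$-conjugation); then $\mathfrak{c}\cap\gk\subseteq\mathfrak{k}_+$ is a maximal abelian subalgebra of $\gk$, so $\mathbf{K}_+$ contains a maximal torus of $\mathbf{K}$, which is (2). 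For the converse (2)$\Rightarrow$(3): starting from a maximal abelian subalgebra $\mathfrak{t}\subseteq\mathfrak{k}_+=\mathfrak{w}\cap\gk$ of $\gk$, I would extend it entirely inside the reductive, $\vartheta$-stable subalgebra $\mathfrak{w}\subseteq\g_+$ by choosing a maximal abelian subalgebra $\mathfrak{a}$ of $\mathfrak{z}_{\mathfrak{w}}(\mathfrak{t})\cap\mathfrak{p}$, and set $\mathfrak{c}=\mathfrak{t}\oplus\mathfrak{a}\subseteq\mathfrak{w}\subseteq\g_+$. Since $\mathfrak{w}=\mathfrak{z}_{\g}(\mathfrak{z})$ contains the Cartan subalgebra $\mathfrak{h}\supseteq\mathfrak{z}$ of $\g$, it has full rank, so $\mathfrak{c}$ is in fact a Cartan subalgebra of $\g$; and it is maximally compact because its compact part $\mathfrak{t}$ already attains the maximal dimension $\rk(\gk)$.

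The main obstacle is the direction (2)$\Rightarrow$(3): extending $\mathfrak{t}$ to a maximally compact Cartan subalgebra of $\g$ is routine, but one must keep the \emph{entire} Cartan subalgebra inside $\g_+$, not merely inside $\g$. The device that makes this work is to perform the extension within the reductive complement $\mathfrak{w}$, using three of its properties simultaneously: it lies in $\g_+$, it is $\vartheta$-stable (so the extension has the correct compact dimension), and it has the same rank as $\g$ (so a Cartan subalgebra of $\mathfrak{w}$ is automatically one of $\g$). A secondary, purely technical point occurs in (3)$\Rightarrow$(2), namely the reduction to a $\vartheta$-stable Cartan subalgebra within $\g_+$, achieved by conjugating first by $\mathbf{N}$ into $\mathfrak{w}$ and then by $\mathbf{W}$.
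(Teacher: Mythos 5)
Your proof is correct, and its skeleton coincides with the paper's: the preliminary claim about $\mathbf{K}_+$ is disposed of first, $(1)\Leftrightarrow(2)$ is quoted from the Hopf--Samelson/Wang theorem, and all the work goes into $(2)\Leftrightarrow(3)$. The implementation of that last step, however, is genuinely different. For $(2)\Rightarrow(3)$ the paper takes a maximal abelian subalgebra $\mathfrak{a}\supset\mathfrak{t}$ of $\mathfrak{g}_+$ consisting of $\mathrm{ad}_{\mathfrak{g}}$-semisimple elements and then invokes the fact that $\mathfrak{g}_+$ is $\mathrm{ad}_{\mathfrak{g}}$-splittable (via \cite[Proposition 5.4]{AMN06} and Bourbaki) to conclude that such an $\mathfrak{a}$ is automatically a Cartan subalgebra of $\mathfrak{g}_+$, hence of $\mathfrak{g}$ because $\mathfrak{g}_+$ already contains one Cartan subalgebra of $\mathfrak{g}$. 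You instead never leave the reductive complement $\mathfrak{w}$ of $\mathfrak{n}$ in $\mathfrak{g}_+$, exploiting exactly the three properties you isolate ($\vartheta$-stability, containment in $\mathfrak{g}_+$, and full rank since $\mathfrak{h}\subset\mathbf{Z}_{\mathfrak{g}}(\mathfrak{z})=\mathfrak{w}$) together with the standard construction $\mathfrak{c}=\mathfrak{t}\oplus\mathfrak{a}$ with $\mathfrak{a}$ maximal abelian in $\mathfrak{z}_{\mathfrak{w}}(\mathfrak{t})\cap\mathfrak{p}$. This buys you independence from the splittability lemma, at the price of the routine verifications that $\mathfrak{k}_+=\mathfrak{w}\cap\mathfrak{k}$ (which, note, uses the $\vartheta$-stability of the decomposition $\mathfrak{g}_+=\mathfrak{n}\oplus\mathfrak{w}$ to split a $\vartheta$-fixed element into $\vartheta$-fixed components, not merely $\mathfrak{n}\cap\mathfrak{k}=0$) and that a Cartan subalgebra of the full-rank centralizer $\mathfrak{w}=\mathfrak{z}_{\mathfrak{g}}(\mathfrak{z})$ is one of $\mathfrak{g}$ (cleanest via $\mathfrak{z}\subset\mathfrak{c}$, whence $\mathfrak{z}_{\mathfrak{g}}(\mathfrak{c})\subset\mathfrak{w}$). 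Two of your refinements are actually more careful than the printed proof: in $(3)\Rightarrow(2)$ you arrange $\vartheta$-stability of the given Cartan subalgebra by conjugating \emph{inside} $\mathbf{G}_+$ (first by $\mathbf{N}$ into $\mathfrak{w}$, then by $\mathbf{W}$), which is the correct precaution since a conjugation in $\mathbf{G}$ at large could move it out of $\mathfrak{g}_+$, a point the paper leaves tacit; and for the maximal compactness of $\mathbf{K}_+$ in $\mathbf{G}_+$ you give a self-contained argument from the global Cartan decomposition of the $\Theta$-stable $\mathbf{G}_+$, where the paper only gestures at \eqref{cappa}.
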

\begin{proof}
The equivalence $(1)\Longleftrightarrow(2)$ is the contents of \cite{Wan1949}, 
thus we need only to prove that $(2)\Longleftrightarrow(3)$.
We also observe that $\mathbf{K}_+$ is a maximal compact subgroup
of $\mathbf{G}_+$ because of \eqref{cappa}.
\par
Let $\gk$ and $\gk_+$ be the Lie algebras of $\bK$ and 
$\bK_+$,
respectively.
Assume that $\gk_+$ contains a maximal torus $\gt$ of $\gk$.
Take a maximal Abelian subalgebra
$\mathfrak{a}$ of $\mathfrak{g}_+$, consisting of
$\mathrm{ad}_{\mathfrak{g}}$-semisimple elements, and
with $\mathfrak{a}\supset\gt$. We claim that
$\mathfrak{a}$ is a Cartan subalgebra of $\g_+$
and therefore also of $\g$, and clearly it will also be
maximally compact in $\mathfrak{g}$.
Indeed, since $\g_+$ contains a
Cartan subalgebra of $\g$,
all Cartan subalgebras of $\mathfrak{g}_+$ are also
Cartan subalgebras of $\mathfrak{g}$. 
Since $\mathfrak{g}_+$ is $\mathrm{ad}_{\mathfrak{g}}$-splittable
(see \cite[Proposition 5.4]{AMN06}), 
its Cartan subalgebras are its maximal Abelian
Lie subalgebras consisting of 
$\mathrm{ad}_{\mathfrak{g}}$-semisimple elements
(cf. e.g. \cite[Chap.VII,\S5, Prop.6]{Bou75}).
\par
Vice versa, if $\mathfrak{a}$ is a 
maximally compact Cartan subalgebra of $\g$ contained in
$\g_+$, then $\mathfrak{a}\cap\gk
=\mathfrak{a}\cap\gk_+$ is a maximal torus of $\gk$ and $\gk_+$.
Thus $\mathbf{K}$ and $\mathbf{K}_+$ have the same rank. 
\end{proof}
In the following
we shall keep the notation of \S\ref{seccrmanifold}.
In particular, we fix a maximally noncompact Cartan subalgebra
$\mathfrak{h}$ of $\mathfrak{g}$ contained in $\mathfrak{g}_+$,
standard with respect to the Cartan decomposition
\eqref{cartdecomp} associated to the Cartan involution $\vartheta$
of Proposition \ref{ccaa}.
To express the
equivalent conditions $(1)$, $(2)$ and $(3)$ of Proposition \ref{prop-rank}
in terms of the description in \S\ref{seccrmanifold}, 
we need to rehearse first the construction of the Cartan subalgebras
of a real semisimple Lie algebra from \cite{Kos55},
\cite{Su59} 
and \cite{Kn:2002}\,:
\begin{lem}\label{standardcartan} 
With the notation above: every Cartan subalgebra
of $\mathfrak{g}$ is equivalent, modulo an inner automorphism,
to a Cartan subalgebra $\mathfrak{a}$ 
which is standard with respect to the triple $(\mathfrak{k},
\mathfrak{p},\mathfrak{h}^-)$. This means that
$\mathfrak{a}$ has 
noncompact part $\mathfrak{a}^-\subset\mathfrak{h}^-$ and
compact part $\mathfrak{a}^+\subset\mathfrak{k}$.\par
All standard Cartan subalgebras  $\mathfrak{a}$ are obtained in
the following way:
\begin{enumerate}
\item fix a system $\alpha_1,\hdots,\alpha_r$ of strongly orthogonal
real roots in $\mathcal{R}$; 
\item fix $X_{\pm\alpha_i}\in\Hat{\mathfrak{g}}^{\pm\alpha_i}\cap\mathfrak{g}$
with $[X_{-\alpha_i},X_{\alpha_i}]=H_{\alpha_i}$,
$[H_{\alpha_i},X_{\pm\alpha_i}]=\pm{2}X_{\pm\alpha_i}$, for $i=1,\hdots,r$;
\item let 
$\mathbf{d}=\mathbf{d}_{\alpha_1}\circ\cdots\circ\mathbf{d}_{\alpha_r}$,
where
$\mathbf{d}_{\alpha_i}={\mathrm{Ad}}_{\Hat{\mathfrak{g}}}
\left(\exp(i\pi(X_{-\alpha_i}
-X_{\alpha_i})/4)\right)$, for $i=1,\hdots,r$, 
($\mathbf{d}$ is the Cayley transform with respect to
$\alpha_1,\hdots,\alpha_r$);
\item set $\mathfrak{a}=\mathbf{d}(\Hat{\mathfrak{h}})\cap\mathfrak{g}$.
\qed \end{enumerate}
\end{lem}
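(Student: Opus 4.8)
The plan is to read this as the Kostant--Sugiura classification of the conjugacy classes of Cartan subalgebras of a real semisimple Lie algebra, and to check that the explicit recipe (1)--(4) reproduces their $\vartheta$-stable standard representatives. I write $\sigma$ for the conjugation of $\Hat{\mathfrak{g}}$ with respect to $\mathfrak{g}$, and keep the Cartan involution $\vartheta$, the decomposition $\mathfrak{g}=\mathfrak{k}\oplus\mathfrak{p}$ and the maximally noncompact $\mathfrak{h}=\mathfrak{h}^+\oplus\mathfrak{h}^-$ of \S\ref{seccrmanifold} fixed; recall that $\mathfrak{h}^-$ is then maximal abelian in $\mathfrak{p}$.

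For the existence of a standard representative I would start from an arbitrary Cartan subalgebra $\mathfrak{b}$ of $\mathfrak{g}$ and first conjugate it, by an inner automorphism, to a $\vartheta$-stable one (this is classical, see \cite[Ch.\,VI]{Kn:2002}), so that $\mathfrak{b}=(\mathfrak{b}\cap\mathfrak{k})\oplus(\mathfrak{b}\cap\mathfrak{p})$. The noncompact part $\mathfrak{b}\cap\mathfrak{p}$ is an abelian subspace of $\mathfrak{p}$; since all maximal abelian subspaces of $\mathfrak{p}$ are $\mathbf{K}$-conjugate and $\mathfrak{h}^-$ is one of them, a further conjugation by an element of $\mathbf{K}$ --- which preserves both $\vartheta$-stability and the splitting $\mathfrak{g}=\mathfrak{k}\oplus\mathfrak{p}$ --- carries $\mathfrak{b}\cap\mathfrak{p}$ into $\mathfrak{h}^-$. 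The resulting Cartan subalgebra is standard with respect to $(\mathfrak{k},\mathfrak{p},\mathfrak{h}^-)$.

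Next I would verify that the Cayley recipe produces standard Cartan subalgebras. Strong orthogonality of $\alpha_1,\dots,\alpha_r$ makes the $\mathfrak{sl}_2$-triples commute elementwise, so the $\mathbf{d}_{\alpha_i}$ commute and $\mathbf{d}$ is an automorphism of $\Hat{\mathfrak{g}}$; hence $\mathbf{d}(\Hat{\mathfrak{h}})$ is again a Cartan subalgebra of $\Hat{\mathfrak{g}}$. The crucial point is that $\mathfrak{a}=\mathbf{d}(\Hat{\mathfrak{h}})\cap\mathfrak{g}$ is a real form of it. Because $X_{\pm\alpha_i}\in\mathfrak{g}$ and $\sigma$ is antilinear, $\sigma$ sends the generator $\tfrac{i\pi}{4}(X_{-\alpha_i}-X_{\alpha_i})$ to its negative, so $\sigma\,\mathbf{d}_{\alpha_i}=\mathbf{d}_{\alpha_i}^{-1}\sigma$ and thus $\sigma\,\mathbf{d}=\mathbf{d}^{-1}\sigma$; moreover a one-line $\mathfrak{sl}_2$ computation gives $\mathbf{d}_{\alpha_i}(H_{\alpha_i})=i(X_{\alpha_i}+X_{-\alpha_i})$ and $\mathbf{d}_{\alpha_i}^{2}(H_{\alpha_i})=-H_{\alpha_i}$, while $\mathbf{d}$ fixes $\Hat{\mathfrak{h}}\cap\bigcap_i\ker\alpha_i$, so each $\mathbf{d}_{\alpha_i}^{2}$ acts on $\Hat{\mathfrak{h}}$ as the reflection $s_{\alpha_i}$ and $\mathbf{d}^2(\Hat{\mathfrak{h}})=\Hat{\mathfrak{h}}$. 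Combining these, $\sigma(\mathbf{d}(\Hat{\mathfrak{h}}))=\mathbf{d}^{-1}(\Hat{\mathfrak{h}})=\mathbf{d}(\Hat{\mathfrak{h}})$, so $\mathbf{d}(\Hat{\mathfrak{h}})$ is $\sigma$-stable and $\mathfrak{a}$ is a Cartan subalgebra of $\mathfrak{g}$ with complexification $\mathbf{d}(\Hat{\mathfrak{h}})$. Finally, for standardness one normalizes the real root vectors by $\vartheta X_{\alpha_i}=X_{-\alpha_i}$ --- admissible because the sign convention $[X_{-\alpha_i},X_{\alpha_i}]=H_{\alpha_i}$ forces $B(X_{\alpha_i},X_{-\alpha_i})=-\tfrac12 B(H_{\alpha_i},H_{\alpha_i})<0$, and positive-definiteness of the form $-B(\cdot,\vartheta\,\cdot)$ then pins down the relevant sign --- so that the transformed directions $\mathbb{R}(X_{\alpha_i}+X_{-\alpha_i})$ lie in $\mathfrak{k}$; this yields $\mathfrak{a}^-\subset\mathfrak{h}^-$ and $\mathfrak{a}^+\subset\mathfrak{k}$.

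The main obstacle is the exhaustiveness half. After putting $\mathfrak{b}$ in standard position with $\mathfrak{b}\cap\mathfrak{p}\subset\mathfrak{h}^-$, one must show that the orthocomplement of $\mathfrak{b}\cap\mathfrak{p}$ in $\mathfrak{h}^-$ is spanned by the $H_{\alpha_i}$ of a strongly orthogonal system of \emph{real} roots and that $\mathfrak{b}$ is exactly the Cayley transform of $\mathfrak{h}$ through that system, i.e. that the assignment sending a strongly orthogonal system of real roots (modulo the Weyl group of $\mathfrak{h}^-$) to a conjugacy class of Cartan subalgebras is onto. This combinatorial matching, together with the book-keeping of the $\vartheta$- and $\sigma$-normalizations of the $X_{\pm\alpha_i}$, is precisely the content of \cite{Kos55} and \cite{Su59}; I would invoke those results for the hard direction and refer to \cite[Ch.\,VI]{Kn:2002} for the detailed verification that the formulas in (1)--(4) coincide with their construction.
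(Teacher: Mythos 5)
Your proposal is correct and matches the paper's treatment: the paper states this lemma with no proof at all (note the \verb|\qed| inside the statement), presenting it as a rehearsal of the Kostant--Sugiura--Knapp classification from \cite{Kos55}, \cite{Su59} and \cite[Ch.~VI]{Kn:2002}, which is exactly where you send the reader for the exhaustiveness half. The verifications you do supply --- the $\mathbf{K}$-conjugation into standard position, the identities $\sigma\,\mathbf{d}=\mathbf{d}^{-1}\sigma$ and $\mathbf{d}^{2}|_{\Hat{\mathfrak{h}}}=s_{\alpha_1}\cdots s_{\alpha_r}$, and the sign analysis forcing $\vartheta X_{\alpha_i}=X_{-\alpha_i}$ --- are all sound.
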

\begin{nnott}
For a real semisimple Lie algebra $\mathfrak{g}$,
with associated Satake's diagram $\mathcal{S}$,  
we shall denote by $\nu=\nu(\mathfrak{g})=
\nu(\mathcal{S})$ the maximum number of strongly orthogonal real roots
in $\mathcal{R}$. \end{nnott}
From Lemma \ref{standardcartan} we deduce the criterion\,:
\begin{prop}\label{criterio}
Let $M$ be the minimal orbit corresponding to the pair
$(\mathfrak{g},\mathfrak{q}_{\Phi})$. Let $\mathfrak{l}$ be a Levi subalgebra
of $\mathfrak{g}_+$. 
Then $\chi(M)>0$ if and only if one of the following equivalent conditions
is satisfied\,:
\begin{align} 
&\left\{\begin{aligned}
&\text{$\mathcal{Q}^r_{\Phi}$
contains a maximal system of  strongly orthogonal}\\
&\text{real roots of
$\mathcal{R}$;}\end{aligned}\right.
\label{primocriterio}
\\
&\nu(\mathfrak{l})=\nu(\mathfrak{g}).\label{nucriterio}
\end{align}
\end{prop}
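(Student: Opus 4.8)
The plan is to route everything through the equivalent condition $(3)$ of Proposition \ref{prop-rank} — that $\mathfrak{g}_+$ contains a maximally compact Cartan subalgebra of $\mathfrak{g}$ — and to treat \eqref{nucriterio} as the hub against which both $\chi(M)>0$ and \eqref{primocriterio} are compared. Accordingly I would prove the two equivalences \eqref{primocriterio}$\Leftrightarrow$\eqref{nucriterio} and $(3)\Leftrightarrow$\eqref{nucriterio} separately, the first being combinatorial and the second a dimension count.

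First I would dispose of \eqref{primocriterio}$\Leftrightarrow$\eqref{nucriterio}, which is purely root-theoretic. A real root $\alpha$ satisfies $\bar\alpha=\alpha$, so if $\alpha\in\mathcal{Q}^r_\Phi$ then automatically $\alpha\in\bar{\mathcal{Q}}^r_\Phi$; moreover the support argument of Proposition \ref{essesecondo} shows that every real $\alpha\in\mathcal{Q}^r_\Phi$ in fact lies in $\mathcal{Q}^r_\Pi=\mathcal{R}''_\Phi$, the root system of $\hat{\mathfrak{l}}$. Hence the real roots contained in $\mathcal{Q}^r_\Phi$ are exactly the real roots of $\mathfrak{l}$, and the maximal number of strongly orthogonal ones among them is by definition $\nu(\mathfrak{l})$. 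Since $\mathcal{R}''_\Phi\subset\mathcal{R}$, one always has $\nu(\mathfrak{l})\le\nu(\mathfrak{g})$; therefore $\mathcal{Q}^r_\Phi$ contains a system of $\nu(\mathfrak{g})$ strongly orthogonal real roots — i.e. \eqref{primocriterio} holds — if and only if $\nu(\mathfrak{l})=\nu(\mathfrak{g})$, which is \eqref{nucriterio}.

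For the implication \eqref{nucriterio}$\Rightarrow(3)$ I would argue constructively via Lemma \ref{standardcartan}. By the previous paragraph, \eqref{nucriterio} furnishes a maximal system $\alpha_1,\dots,\alpha_{\nu(\mathfrak{g})}$ of strongly orthogonal real roots inside $\mathcal{Q}^r_\Phi$, whose associated Cayley transform $\mathbf{d}$ produces a maximally compact standard Cartan subalgebra $\mathfrak{a}=\mathbf{d}(\hat{\mathfrak{h}})\cap\mathfrak{g}$. Because $\mathcal{Q}^r_\Phi$ is symmetric, the root vectors $X_{\pm\alpha_i}\in\hat{\mathfrak{g}}^{\pm\alpha_i}$ lie in $\mathfrak{q}^r_\Phi\subset\mathfrak{q}$, and computing $\mathrm{ad}(X_{-\alpha_i}-X_{\alpha_i})$ on $\hat{\mathfrak{h}}$ shows $\mathbf{d}(\hat{\mathfrak{h}})\subset\hat{\mathfrak{h}}+\sum_i(\hat{\mathfrak{g}}^{\alpha_i}+\hat{\mathfrak{g}}^{-\alpha_i})\subset\mathfrak{q}$; hence $\mathfrak{a}\subset\mathfrak{q}\cap\mathfrak{g}=\mathfrak{g}_+$, which is $(3)$.

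The converse $(3)\Rightarrow$\eqref{nucriterio} is the step I expect to be the main obstacle, and here I would use dimension counting rather than an explicit construction. Any Cartan subalgebra of $\mathfrak{g}$ contained in $\mathfrak{g}_+$ is a Cartan subalgebra of $\mathfrak{g}_+$ (as in the proof of Proposition \ref{prop-rank}) and, modulo the nilradical $\mathfrak{n}$, is conjugate to a $\vartheta$-stable one of the form $\mathfrak{z}\oplus\mathfrak{c}$ with $\mathfrak{c}$ a Cartan subalgebra of $\mathfrak{l}$, so its noncompact dimension is $\dim\mathfrak{z}^-+\dim\mathfrak{c}^-$. Since by Proposition \ref{essesecondo} $\mathfrak{h}\cap\mathfrak{l}$ is maximally noncompact in $\mathfrak{l}$, the minimum of $\dim\mathfrak{c}^-$ over Cartan subalgebras of $\mathfrak{l}$ is $\dim(\mathfrak{h}\cap\mathfrak{l})^--\nu(\mathfrak{l})$, and using $\dim\mathfrak{z}^-+\dim(\mathfrak{h}\cap\mathfrak{l})^-=\dim\mathfrak{h}^-$ the minimal noncompact dimension of a Cartan subalgebra of $\mathfrak{g}_+$ comes out to $\dim\mathfrak{h}^--\nu(\mathfrak{l})$. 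The minimal noncompact dimension among \emph{all} Cartan subalgebras of $\mathfrak{g}$ is $\dim\mathfrak{h}^--\nu(\mathfrak{g})$, so $\mathfrak{g}_+$ contains a maximally compact Cartan subalgebra of $\mathfrak{g}$ — condition $(3)$ — exactly when $\nu(\mathfrak{l})=\nu(\mathfrak{g})$. The delicate points to check are that conjugating a Cartan subalgebra of $\mathfrak{g}_+$ into $\mathfrak{w}$ by an inner automorphism of $\mathfrak{g}$ preserves its noncompact dimension, i.e. that this dimension is an $\mathrm{Int}(\mathfrak{g})$-conjugacy invariant (from the Kostant--Sugiura theory cited in Lemma \ref{standardcartan}), and that the center $\mathfrak{z}$, being $\vartheta$-stable and contained in every Cartan subalgebra of $\mathfrak{w}$, contributes the common summand $\dim\mathfrak{z}^-$ throughout.
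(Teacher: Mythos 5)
Your proposal is correct and follows essentially the same route as the paper: both reduce to condition $(3)$ of Proposition \ref{prop-rank}, pass to the decomposition $\mathfrak{w}=\mathfrak{z}\oplus\mathfrak{l}$, identify the real roots available in $\mathcal{Q}^r_{\Phi}$ with the real roots of $\mathfrak{l}$, and invoke the Cayley-transform classification of Lemma \ref{standardcartan} to compare $\nu(\mathfrak{l})$ with $\nu(\mathfrak{g})$. You merely spell out in detail (the support argument for real roots, the explicit containment $\mathbf{d}(\Hat{\mathfrak{h}})\subset\mathfrak{q}$, and the compact-dimension count) what the paper compresses into ``Hence the statement follows.''
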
  
\begin{proof} 
The Cartan subalgebras  of $\mathfrak{g}$ contained in
$\mathfrak{g}_+$ are conjugated, modulo inner automorphisms of
$\mathfrak{g}_+$, to standard Cartan subalgebras  that are contained in
$\mathfrak{w}=\mathfrak{q}^r\cap\mathfrak{g}$. Decompose 
the reductive real Lie algebra $\mathfrak{w}$ as
$\mathfrak{w}=\mathfrak{l}\oplus\mathfrak{z}$, where
$\mathfrak{z}$ is the center of $\mathfrak{w}$ and
$\mathfrak{l}=[\mathfrak{w},\mathfrak{w}]$ its semisimple ideal,
that is a Levi subalgebra of $\mathfrak{g}_+$.
We have $\mathfrak{z}\subset\mathfrak{h}$ and
$\mathfrak{h}=\mathfrak{z}\oplus\left(\mathfrak{h}\cap\mathfrak{l}\right)$.
Thus a maximally compact Cartan subalgebra of $\mathfrak{g}_+$ will
be conjugate to one of the form $\mathfrak{z}\oplus\mathfrak{e}$, 
with $\mathfrak{e}$ a maximally compact Cartan subalgebra of
$\mathfrak{l}$. By Lemma \ref{standardcartan}, these are obtained
via a Cayley transform $\mathbf{d}=\mathbf{d}_{\alpha_1}\circ\cdots\circ
\mathbf{d}_{\alpha_r}$ for a system of strongly orthogonal real roots
$\alpha_1,\hdots,\alpha_r$ in $\mathcal{Q}^r_{\Phi}$. 
Hence the statement follows.
\end{proof}

\section{Classification of the minimal orbits with $\chi(M)>0$}
\label{classificazione}
Throughout this section, we shall 
consistently employ the notation of the
previous sections. In particular, $\mathfrak{l}$ will always denote
a Levi subalgebra of $\mathfrak{g}_+$, 
$\mathfrak{k}$ the compact Lie subalgebra in
the decomposition \eqref{cartdecomp}. We
set $\mathfrak{k}_+^s$ for the maximal compact subalgebra
$\mathfrak{k}\cap\mathfrak{l}$ of 
$\mathfrak{l}$.
\par\smallskip
By using the result of Proposition \ref{fibrazione}, the computation
of $\chi(M)$ for a minimal orbit $M$ can be reduced to the
the case where, for the associated $CR$ algebra $(\mathfrak{g},\mathfrak{q})$,
the real Lie algebra ${\mathfrak{g}}$ is simple and
the parabolic $\mathfrak{q}$ is maximal, i.e. 
$\Phi=\{\alpha\}$ for some $\alpha\in\mathcal{B}$. 
Thus we begin
by considering this special case\,:
\begin{thm}\label{singolo} 
Let $M$ be the minimal orbit associated to the 
effective pair
$(\mathfrak{g},\mathfrak{q}_{\{\alpha\}})$, with ${\mathfrak{g}}$ simple 
and a maximal parabolic $\mathfrak{q}_{\{\alpha\}}\subset\Hat{\mathfrak{g}}$, 
for $\alpha\in\mathcal{B}$. \par
Then $\chi(M)>0$ 
if and only if either one of the following conditions holds: 
\begin{itemize}
\item
$\mathfrak{g}$ is either
of the complex
type, or compact, or of the real 
non split types $\mathrm{A\,I\!{I}}$,
$\mathrm{D\,I\!{I}}$, $\mathrm{E\,I\!{V}}$
and $\alpha$ is any root in $\mathcal{B}$;
\item $\mathfrak{g}$ is of the real types
$\mathrm{A\,I}$, $\mathrm{D\,I}$, $\mathrm{E\,I}$,
and $\alpha\in\mathcal{B}$ is chosen as in the table below.
\end{itemize}
Here we list all pairs
of real noncompact $\mathfrak{g}$ and $\alpha\in\mathcal{B}$
for which $\chi(M)>0$, also computing
$\chi(M)$ in the different cases.
\[{\scriptsize
\begin{array}{|c|c|c|c|c|c|}\hline
&&&&&\\[-5pt]
\mathrm{type}&\mathfrak{g}&\alpha&\mathrm{condition}&\chi(\tilde{M})
&\chi(M)\\
&&&&&\\[-5pt]\hline
&&&&&\\[-5pt]
\mathrm{A\,I}&\mathfrak{sl}(n,\mathbb{R})&\alpha_i&i\cdot(n-i)\in{2}\mathbb{Z}&
2\binom{[n/2]}{[i/2]}
&\binom{[n/2]}{[i/2]}
\\
&&&&&\\[-5pt] \hline
&&&&&\\[-5pt]
&&\alpha_{2i-1}&1\leq i\leq{n}&
2i\binom{n}{i}&2i\binom{n}{i}\\
\mathrm{A\,I\!{I}}&
\mathfrak{sl}(n,\mathbb{H})&&&&\\[-5pt]
\cline{3-6}
&&&&&\\[-5pt]
&&\alpha_{2i}&1\leq{i}<{n}&
\binom{n}{i}&\binom{n}{i}\\
&&&&&\\[-5pt] \hline
&&&&&\\[-5pt]
\mathrm{D\,I}&
\begin{gathered}
\mathfrak{so}(p,2n-p)\\
n\geq 4\\
2\leq p\leq{n}
\end{gathered}
&\alpha_1&p+1\in{2}\mathbb{Z}&
4&2\\ 
&&&&&\\[-5pt] \hline
&&&&&\\[-5pt]
&
&\alpha_1&
&
2&2
\\ 
&&&&&\\[-5pt]
\cline{3-6}
&\mathfrak{so}(1,2n-1) &&&&\\[-5pt]
\mathrm{D\,I\!{I}}&&
\alpha_i&
2\leq i\leq n-2
&
2^i\binom{n-1}{i-1}&2^i\binom{n-1}{i-1}\\
&n\geq 4&&&&
\\ \cline{3-6}
&&&&&\\[-5pt]
&
&\alpha_{i}&
n-1\leq i\leq{n}&
2^{n-1}&2^{n-1}\\
&&&&&
\\[-5pt] \hline
&&&&&\\[-5pt]
\mathrm{E\,I}&
\mathfrak{e}_{\mathrm{I}}
&\alpha_{i}&
i\in\{1,6\}&
6&3
\\[4pt]
\hline
\end{array}}
\]

\[{\scriptsize
\begin{array}{|c|c|c|c|c|c|}\hline
&&&&&\\[-5pt]
\mathrm{type}&\mathfrak{g}&\alpha&\mathrm{condition}&\chi(\tilde{M})
&\chi(M)\\
&&&&&\\[-5pt]\hline
&&&&&\\[-5pt]
&&\alpha_{i}&
i\in\{1,6\}&
3&3\\
&&&&&
\\[-5pt] \cline{3-6}
&&&&&\\[-5pt]
\mathrm{E\,I\!{V}}&
\mathfrak{e}_{\mathrm{I\!{V}}}
&
\alpha_{i}&
i=2,3,5&
192&192\\
&&&&&
\\[-5pt] \cline{3-6}
&&&&&\\[-5pt]
&&\alpha_{4}&
&
144&144\\[4pt]
\hline
\end{array}}
\]
\end{thm}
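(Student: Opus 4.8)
The plan is to convert the positivity statement into the combinatorial criterion of Proposition~\ref{criterio} and then to compute the actual numbers from Weyl-group orders. Since $\mathfrak{q}=\mathfrak{q}_{\{\alpha\}}$ is maximal, the set $\Pi$ of Lemma~\ref{modellodebole} collapses drastically: if $\alpha$ is white then $\Pi=\{\alpha,\varepsilon(\alpha)\}$, because for a white simple $\beta$ one has $\alpha\in\mathrm{supp}(\bar\beta)$ exactly when $\beta=\varepsilon(\alpha)$, while a cross-marked imaginary $\alpha$ will occur only in the forms treated below with $\nu(\mathfrak{g})=0$, where the precise shape of $\Pi$ is immaterial. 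By Proposition~\ref{essesecondo} the Levi subalgebra $\mathfrak{l}$ of $\mathfrak{g}_+$ then has Satake diagram obtained from $\mathcal{S}$ by deleting $\alpha$, its $\varepsilon$-partner $\varepsilon(\alpha)$ if distinct, and all edges and arrows meeting them. Consequently, via \eqref{nucriterio}, $\chi(M)>0$ is equivalent to the equality $\nu(\mathfrak{l})=\nu(\mathfrak{g})$, which I can test directly on diagrams.

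First I would dispose of the pairs for which $\chi(M)>0$ holds for \emph{every} $\alpha$; these are exactly the $\mathfrak{g}$ with $\nu(\mathfrak{g})=0$, i.e.\ with no real roots, for then $\nu(\mathfrak{l})=0=\nu(\mathfrak{g})$ whatever $\alpha$ is. When $\mathfrak{g}$ is compact we have $M=\Hat{M}$, and when $\mathfrak{g}$ is of the complex type $M$ is diffeomorphic to a complex flag manifold by the last Proposition of \S\ref{seccrmanifold}; in both cases $\chi(M)>0$ (and indeed $\nu=0$). Scanning the appendix table for the remaining noncompact forms without real roots singles out precisely $\mathrm{A\,I\!I}$, $\mathrm{D\,I\!I}$ and $\mathrm{E\,I\!V}$, which therefore satisfy $\chi(M)>0$ for all $\alpha$, whether white or black.

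It then remains to treat the forms with $\nu(\mathfrak{g})>0$ one diagram at a time, using the maximal strongly orthogonal (Kostant-cascade) systems recorded in the appendix. For $\mathfrak{sl}(n,\mathbb{R})$ (type $A_{n-1}$, all roots real) removing $\alpha_i$ leaves $A_{i-1}\times A_{n-i-1}$, and $\lfloor i/2\rfloor+\lfloor(n-i)/2\rfloor=\lfloor n/2\rfloor$ holds exactly when $i(n-i)$ is even; for $\mathfrak{so}(p,2n-p)$ the real roots form a system of type $D_p$ with $\nu=2\lfloor p/2\rfloor$, and only deletion of $\alpha_1$ can preserve this, doing so precisely when $p$ is odd; for the split $\mathfrak{e}_{\mathrm{I}}$ the removal of $\alpha_1$ or $\alpha_6$ yields $D_5$ with $\nu(D_5)=4=\nu(E_6)$, while every other node strictly lowers $\nu$. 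The same check run through the remaining noncompact types ($\mathrm{A\,I\!I\!I}$, the $B$, $C$, $F$, $G$ families, $\mathrm{D\,I\!I\!I}$, $\mathrm{E\,I\!I}$, $\mathrm{E\,I\!I\!I}$, $\mathrm{E\,V}$, \dots) shows that a single-node deletion always destroys a strongly orthogonal real root, so none of them appears; this establishes that the list is exhaustive.

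Finally, for each surviving pair I would read off the maximal compact $\mathbf{K}$ of $\mathbf{G}$ and the isotropy $\mathbf{K}_+=\mathbf{K}\cap\mathbf{Q}$ from the cross-marked diagram, compute $\chi(\tilde{M})=|\mathbf{W}(\mathbf{K})|/|\mathbf{W}(\mathbf{K}_+^0)|$, and then divide by $|\pi_1(M)|$, the order of the fundamental group being extracted from the homotopy sequence of $\mathbf{K}_+\to\mathbf{K}\to M$ in terms of $\pi_0(\mathbf{K}_+)$ and the image of $\pi_1(\mathbf{K}_+)\to\pi_1(\mathbf{K})$. The combinatorial half above is routine once Proposition~\ref{criterio} is available; the genuine difficulty, and the source of the nonuniform prefactors in the table (for instance $\chi(\tilde{M})=2\chi(M)$ in types $\mathrm{A\,I}$ and $\mathrm{D\,I}$, against $\chi(\tilde{M})=\chi(M)$ in type $\mathrm{E\,I\!V}$), is the precise determination of the component group of $\mathbf{K}_+$ and of this $\pi_1$-image, which is exactly where the parity conditions resurface. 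This topological bookkeeping is the step I expect to be the main obstacle.
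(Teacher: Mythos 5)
Your strategy coincides with the paper's (reduce to the criterion $\nu(\mathfrak{l})=\nu(\mathfrak{g})$ of Proposition \ref{criterio}, scan the Satake diagrams, then compute $\chi(\tilde M)$ as a ratio of Weyl group orders), but there are two genuine gaps. The first is your treatment of a cross-marked \emph{imaginary} node. The claim that this ``will occur only in the forms with $\nu(\mathfrak{g})=0$'' is false: the Satake diagrams of $\mathrm{A\,I\!I\!I}$, $\mathrm{A\,I\!V}$, $\mathrm{B\,I\!I}$, $\mathrm{C\,I\!I}$, $\mathrm{D\,I\!I\!I}$, $\mathrm{E\,I\!I\!I}$, $\mathrm{E\,V\!I}$, $\mathrm{E\,V\!I\!I}$, $\mathrm{E\,I\!X}$, $\mathrm{F\,I\!I}$ all contain black nodes while $\nu(\mathfrak{g})>0$. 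Moreover, for a black $\alpha$ the set of nodes to be erased is not $\{\alpha\}$ but $\{\alpha\}\cup\{\beta\in\mathcal{B}\setminus\mathcal{R}_{\bullet}\,|\,\alpha\in\mathrm{supp}_{\mathcal{B}}(\bar\beta)\}$ (Propositions \ref{fibrazione} and \ref{essesecondo}): for $\mathfrak{su}(p,q)$ with $p<i<q$ one must erase $\alpha_p,\alpha_i,\alpha_q$, getting $\mathfrak{l}\simeq\mathfrak{sl}(p,\mathbb{C})\oplus\mathfrak{su}(i-p)\oplus\mathfrak{su}(q-i)$, and for $\mathfrak{sl}(n,\mathbb{H})$ with $\alpha=\alpha_{2i-1}$ one must erase $\alpha_{2i-2},\alpha_{2i-1},\alpha_{2i}$, getting $\mathfrak{sl}(i-1,\mathbb{H})\oplus\mathfrak{sl}(n-i,\mathbb{H})$. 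This is not ``immaterial'' even when $\nu=0$: it is precisely what determines $\mathfrak{k}^s_+$ and hence the entries $2i\binom{n}{i}$, $\binom{n}{i}$ in the $\mathrm{A\,I\!I}$ rows and the $\mathrm{D\,I\!I}$, $\mathrm{E\,I\!V}$ values; with single-node (or $\{\alpha,\varepsilon(\alpha)\}$) deletion those numbers come out wrong, and the exhaustiveness check for the $\nu>0$ forms with black nodes is not actually performed.

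The second gap is that you never produce $|\pi_1(M)|$: you announce that the homotopy sequence of $\mathbf{K}_+\to\mathbf{K}\to M$ will yield it and defer it as ``the main obstacle.'' Since the table asserts $\chi(M)$ and not only $\chi(\tilde M)$, this is a missing step, not bookkeeping. The paper closes it by quotation: when $\nu(\mathfrak{g})=0$ (complex, compact, $\mathrm{A\,I\!I}$, $\mathrm{D\,I\!I}$, $\mathrm{E\,I\!V}$) the orbit $M$ is simply connected by \cite[Theorem 8.6]{AMN06}, so $\chi(M)=\chi(\tilde M)$; in the surviving $\mathrm{A\,I}$, $\mathrm{D\,I}$, $\mathrm{E\,I}$ cases $\pi_1(M)\simeq\mathbb{Z}_2$ by \cite{Wig98}, whence the factor $\tfrac{1}{2}$. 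You need either these inputs or an explicit determination of $\pi_0(\mathbf{K}_+)$ and of the image of $\pi_1(\mathbf{K}_+^0)\to\pi_1(\mathbf{K})$; the parity conditions alone do not supply it. (A minor further slip: it is not true that $\chi(M)>0$ for every $\alpha$ exactly when $\nu(\mathfrak{g})=0$ --- for $\mathfrak{sl}(3,\mathbb{R})$ one has $\nu=1$ yet every $\alpha_i$ satisfies the parity condition.)
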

\par\medskip

\begin{proof} 
When $\mathfrak{g}$ is either of the complex type, or compact, or
of the real types $\mathrm{A\,I\!{I}}$, $\mathrm{D\,I\!{I}}$,
$\mathrm{E\,I\!{V}}$, we have 
$\nu(\mathfrak{g})=\nu(\mathcal{S})=0$ and thus
the necessary and sufficient condition of Proposition \ref{criterio}
to have $\chi(M)>0$
is trivially satisfied.
Moreover, we know from \cite[Theorem 8.6]{AMN06} that
$M$ is simply connected, and therefore $\tilde{M}=M$ and
$\chi(\tilde{M})=\chi(M)$.
\par
We shall discuss the remaining cases mainly
by comparing $\nu(\mathcal{S})$
with $\nu(\mathcal{S}''_{\{\alpha\}})$ for the different types
of $\mathfrak{g}$.\par

\subsection*{$\mathrm{[A\,I]}$} Here $\mathfrak{g}=\mathfrak{sl}(n,\mathbb{R})$
and $\alpha=\alpha_i$ with $1\leq i<n$. Then 
$\mathfrak{l}\simeq\mathfrak{sl}(i,\mathbb{R})\oplus
\mathfrak{sl}(n-i,\mathbb{R})$; hence
$\nu(\mathcal{S}''_{\{\alpha_i\}})=[i/2]+[(n-i)/2]$ and thus
$[n/2]=[i/2]+[(n-i)/2]$, i.e. $i(n-i)\in{2}\mathbb{Z}$,
is the necessary and sufficient condition in order that
$\nu(\mathcal{S})=\nu(\mathcal{S}''_{\{\alpha\}})$.
We have $\mathfrak{k}\simeq\mathfrak{so}(n)$ and
$\mathfrak{k}^s_+\simeq \mathfrak{so}(i)\oplus\mathfrak{so}(n-i)$.
Thus, when $\chi(M)>0$ we have\,:
$$\chi(\tilde{M})=\frac{2^{[(n+1)/2]-1}[n/2]!}{2^{[(i+1)/2]-1}[i/2]!\cdot
2^{[(n-i+1)/2]-1}[(n-i)/2]!}=2\binom{[n/2]}{[i/2]}\,.$$
We have $\pi_1(M)\simeq\mathbb{Z}_2$ (see e.g. \cite{Wig98}),
and hence
$\chi(M)=\binom{[n/2]}{[i/2]}$.
\subsection*{$\mathrm{[A\,I\!{I}]}$} Here $\mathfrak{g}=\mathfrak{sl}(n,
\mathbb{H})$, and
$\nu(\mathcal{S})=0$ yields $\chi(M)>0$ for any choice of $\alpha$.
If $\alpha=\alpha_{2i-1}$, for $1\leq i\leq n$, then
$\mathfrak{l}\simeq \mathfrak{sl}(i-1,\mathbb{H})\oplus
\mathfrak{sl}(n-i,\mathbb{H})$. Hence $\mathfrak{k}\simeq\mathfrak{sp}(n)$
and $\mathfrak{k}_+^s\simeq\mathfrak{sp}(i-1)\oplus\mathfrak{sp}(n-i)$.
Thus $$\chi(M)=\chi(\tilde{M})
=\frac{2^n n!}{2^{i-1}(i-1)!\cdot2^{n-i}(n-i)!}=2i\binom{n}{i}\,.$$
If $\alpha=\alpha_{2i}$ with $1\leq i<n$, then 
$\mathfrak{l}\simeq\mathfrak{sl}(i,\mathbb{H})\oplus
\mathfrak{sl}(n-i,\mathbb{H})$. Thus 
$\mathfrak{k}_+^s\simeq\mathfrak{sp}(i)\oplus\mathfrak{sp}(n-i)$
and
$$\chi(M)=\chi(\tilde{M})=\frac{2^n n!}{(2^{i}(i)!)(2^{n-i}(n-i)!)}
=\binom{n}{i}\,.$$
\subsection*{$\mathrm{[A\,I\! I\! I,\; A\,I\! V]}$} 
We have $\mathfrak{g}\simeq\mathfrak{su}(p,q)$ with
$p\leq q$, $p+q=\ell+1$. Then $\nu(\mathcal{S})=p>0$.
We obtain, for the different choices of $\alpha=\alpha_i\in\mathcal{B}$\,:
$${\scriptsize
\mathfrak{l}\simeq\left\{\begin{aligned}
&\mathfrak{sl}(i,\mathbb{C})\oplus\mathfrak{su}(p-i,q-i)
&&\text{if}\quad 1\leq i\leq p-1&&\Rightarrow 
\nu(\mathcal{S}''_{\alpha})=p-i<p\\
&\mathfrak{sl}(p,\mathbb{C})\oplus \mathfrak{su}(i-p)\oplus
\mathfrak{su}(q-i) &&\text{if}\quad p\leq i\leq q
&&\Rightarrow \nu(\mathcal{S}''_{\alpha})=0<p\\
&\mathfrak{sl}(p+q-i,\mathbb{C})\oplus\mathfrak{su}(i-q,i-p)
&&\text{if}\quad q+1\leq i\leq\ell&&\Rightarrow 
\nu(\mathcal{S}''_{\alpha})=i-q<p
\end{aligned}\right.}
$$
showing that
condition 
(\ref{nucriterio})
is never
fulfilled.
\subsection*{$\mathrm{[B\,I\,, \; B\,I\!{I}]}$} 
We have $\mathfrak{g}\simeq\mathfrak{so}(p,2n+1-p)$, with
$1\leq p\leq{n}$. We have $\nu(\mathcal{S})=p$, while,
for $\alpha=\alpha_i$ we obtain\,:
$${\scriptsize
\mathfrak{l}\simeq\left\{\begin{aligned}
&\mathfrak{sl}(i,\mathbb{R})\oplus\mathfrak{so}(p-i,2n+1-p-i)
&&\text{if}\quad 1\leq i<p&&\Rightarrow \nu(\mathcal{S}''_{\alpha})=
[i/2]+p-i<p\\
&\mathfrak{sl}(p,\mathbb{R})\oplus \mathfrak{so}(2n+1-2p)
&&\text{if}\quad i=p
&&\Rightarrow \nu(\mathcal{S}''_{\alpha})=[p/2]<p\\
&\mathfrak{sl}(p,\mathbb{R})\oplus\mathfrak{su}(i-p)
\oplus\mathfrak{so}(2n+1-2i)
&&\text{if}\quad p<i\leq{n}
&&\Rightarrow \nu(\mathcal{S}''_{\alpha})=[p/2]<p
\end{aligned}\right.}
$$
yielding also in this case $\chi(M)=0$.
\subsection*{$\mathrm{[C\,I]}$} Here $\mathfrak{g}=\mathfrak{sp}(2n,\mathbb{R})$.
We have $\nu(\mathcal{S})=n$, while, for $\alpha=\alpha_i$, with 
$1\leq i\leq{n}$, we obtain that $\mathfrak{l}\simeq \mathfrak{sl}(i,\mathbb{R})
\oplus\mathfrak{sp}(2(n-i),\mathbb{R})$. Hence
$\nu(\mathcal{S}''_{\alpha})=[i/2]+(n-i)<n$ implies that
$\chi(M)=0$. 
\subsection*{$\mathrm{[C\,II]}$} 
We have $\mathfrak{g}\simeq\mathfrak{sp}(p,\ell-p)$, with $1\leq 2p\leq \ell$.
In this case all positive real roots are pairwise strongly orthogonal.
There is a positive real root, namely 
$\beta=\alpha_1+2(\alpha_2+\cdots+\alpha_{\ell-1})+\alpha_{\ell}$,
with $\mathrm{supp}_{\mathcal{B}}(\beta)=\mathcal{B}$. Therefore we have
$\beta\in\mathcal{Q}^n_{\{\alpha\}}$,
hence $\nu(\mathcal{S}''_{\{\alpha\}})<\nu(\mathcal{S})$ and
$\chi(M)=0$, for all possible choices of
$\alpha\in\mathcal{B}$.
\subsection*{$\mathrm{[D\,I]}$}
We have $\mathfrak{g}\simeq\mathfrak{so}(p,2n-p)$,
with $2\leq p\leq{n}$ and $\nu(\mathcal{S})=2\left[\frac{p}{2}\right]$.
Because of the symmetry of $\mathcal{S}$, the minimal orbits corresponding
to $\Phi=\{\alpha_{n-1}\}$ and to $\Phi=\{\alpha_{n}\}$ are diffeomorphic.
Thus we can assume in the following that $i\neq{n-1}$.
We obtain\,:
$${\scriptsize
\mathfrak{l}\simeq\left\{\begin{aligned}
&\mathfrak{sl}(i,\mathbb{R})\oplus\mathfrak{so}(p-i,2n-p-i)
&&\text{if}\; 1\leq i\leq {p}
&\Rightarrow& \nu(\mathcal{S}''_{\alpha})=
\left[\frac{i}{2}\right]+2\left[\frac{p-i}{2}\right]\\
&\mathfrak{sl}(p,\mathbb{R})\oplus \mathfrak{su}(i-p)\oplus\mathfrak{so}(2n-2i)
&&\text{if}\; p<i\leq{n},\; i\neq{n-1}
&&\Rightarrow \nu(\mathcal{S}''_{\alpha})=\left[\frac{p}{2}\right]<2
\left[\frac{p}{2}\right]\,.
\end{aligned}\right.}
$$
The equation $[i/2]+2[(p-i)/2]=2[p/2]$, for integral $i$ with
$1\leq i\leq p$ is solvable if and only if $p$ is odd, and in this
case we also need to have $i=1$. Thus $\chi(M)>0$ if and only
if $p=2h+1$ is odd and $\alpha=\alpha_1$. Then 
$\mathfrak{k}=\mathfrak{so}(2h+1)\oplus\mathfrak{so}(2n-2h-1)$ and
$\mathfrak{k}^s_+\simeq \mathfrak{so}(2h)\oplus\mathfrak{so}(2n-2h-2)$.
Hence in this case\,:
$$\chi(\tilde{M})=\frac{2^hh!\cdot2^{n-h-1}(n-h-1)!}
{2^{h-1}h!\cdot 2^{n-h-2}(n-h-1)!}
=4\,.$$
We have
$\pi_1(M)\simeq\mathbb{Z}_2$ (see e.g. \cite{Wig98}), and 
hence $\chi(M)=2$.
\subsection*{$\mathrm{[D\,I\! I]}$}
We have $\mathfrak{g}\simeq\mathfrak{so}(1,2n-1)$, with $n\geq 4$, 
and $\mathcal{R}$ does not contain any real root, so that 
condition  
(\ref{nucriterio}) 
is trivially fulfilled.
We have $\mathfrak{k}\simeq\mathfrak{so}(2n-1)$.
When $\alpha=\alpha_1$, we have 
$\mathfrak{k}^s_+\simeq\mathfrak{so}(2n-2)$. Thus
$$\chi(M)=\chi(\tilde{M})=\frac{2^{n-1}(n-1)!}{2^{n-2}(n-1)!}=2\,.$$
If $\alpha=\alpha_i$ with $2\leq i\leq n-2$, we obtain
$\mathfrak{k}^s_+\simeq \mathfrak{su}(i-1)\oplus\mathfrak{so}(2n-2i)$
and\,:
$$\chi(M)=\chi(\tilde{M})=\frac{2^{n-1}(n-1)!}{(i-1)!2^{n-i-1}(n-i)!}=
2^i\binom{n-1}{i-1}\,.$$
When $\alpha\in\{\alpha_{n-1},\alpha_n\}$, we obtain
$\mathfrak{k}^s_+\simeq\mathfrak{su}(n-1)$ and therefore\,:
$$\chi(M)=\chi(\tilde{M})=\frac{2^{n-1}(n-1)!}{(n-1)!}=2^{n-1}\,.$$  
\subsection*{$\mathrm{[D\,I\! I\! I]}$} 
We have $\mathfrak{g}\simeq\mathfrak{so}^*(2n)$.
In this case all positive real roots are pairwise strongly orthogonal,
and therefore form a maximal system of strongly orthogonal real roots.
Since $\beta=\alpha_1+\alpha_{n-1}+\alpha_n+2\sum_{i=2}^{n-2}{\alpha_i}$
is a positive real root with $\mathrm{supp}(\beta)=\mathcal{B}$, 
condition 
(\ref{nucriterio}) is not
fulfilled for any choice of $\alpha\in\mathcal{B}$.
\subsection*{The exceptional Lie algebras}
We shall discuss the case of the noncompact real forms of the
exceptional Lie algebras by comparing $\nu=\nu(\mathfrak{g})=
\nu(\mathcal{S})$ with $\nu''=\nu(\mathfrak{l})=
\nu(\mathcal{S}''_{\{\alpha\}})$. Since the proceeding is
straightforward, we limit ourselves to list 
the Levi subalgebra
$\mathfrak{l}$ of $\mathfrak{g}_+$ and the corresponding value
of $\nu''$,
for each different choice of $\alpha\in\mathcal{B}$.
\begin{equation*}\scriptsize
\begin{array}{|c|c|c|c|c|c|}\hline
&&&&&\\[-5pt]
\mathrm{type}&\mathfrak{g}&\nu&\alpha&\mathfrak{l}&\nu''\\ 
&&&&&\\[-5pt]
\hline
&&&&&\\[-5pt]
&&&\alpha_1,\alpha_6&\mathfrak{so}(5,5)&4\\ &&&&&\\[-5pt] \cline{4-6} 
&&&&&\\[-5pt]
\mathrm{E\,I} &\mathfrak{e}_{\mathrm{I}} & 4
&\alpha_2&\mathfrak{sl}(6,\mathbb{R})&3\\ &&&&&\\[-5pt] \cline{4-6} 
&&&&&\\[-5pt]
&&&\alpha_3,\alpha_5&\mathfrak{sl}(2,\mathbb{R})
\oplus\mathfrak{sl}(5,\mathbb{R})
&3\\  &&&&&\\[-5pt] \cline{4-6} &&&&&\\[-5pt]
&&&\alpha_4&\mathfrak{sl}(3,\mathbb{R})\oplus\mathfrak{sl}(2,\mathbb{R})\oplus
\mathfrak{sl}(3,\mathbb{R})&3\\ 
 &&&&&\\[-5pt] \hline &&&&&\\[-5pt]
&&&
\alpha_1,\alpha_6&\mathfrak{so}(3,5)&2\\
&&&&&\\[-5pt] \cline{4-6} &&&&&\\[-5pt]
\mathrm{E\,I\!{I}}&
\mathfrak{e}_{\mathrm{I\! I}}&
4&
\alpha_2&\mathfrak{su}(3,3)&3\\ &&&&&\\[-5pt] \cline{4-6} &&&&&\\[-5pt]
&&&\alpha_3,\alpha_5&
\mathfrak{sl}(2,\mathbb{C})\oplus\mathfrak{sl}(3,\mathbb{R})
&1\\ &&&&&\\[-5pt] \cline{4-6} &&&&&\\[-5pt]
&&&\alpha_4&\mathfrak{sl}(3,\mathbb{C})\oplus
\mathfrak{sl}(2,\mathbb{R})&1\\ &&&&&\\[-5pt] \hline &&&&&\\[-5pt]
&&&
\alpha_1,\alpha_6&\mathfrak{so}(1,7)&0\\ &&&&&\\[-5pt] \cline{4-6} &&&&&\\[-5pt]
\mathrm{E\,I\!{I}\!{I}}&
\mathfrak{e}_{\mathrm{I\! I\! I}}
& 2&\alpha_2&\mathfrak{su}(1,5)&1\\ &&&&&\\[-5pt] \cline{4-6} &&&&&\\[-5pt]
&&&\alpha_3,\alpha_5&\mathfrak{su}(3)
&0\\ &&&&&\\[-5pt] \cline{4-6} &&&&&\\[-5pt]
&&&\alpha_4&\mathfrak{su}(2)\oplus\mathfrak{su}(2)
&0\\ &&&&&\\[-5pt] \hline
&&&&&\\[-5pt]
&&&\alpha_1,\alpha_6&\mathfrak{so}(8)&0\\ &&&&&\\[-5pt] \cline{4-6} 
&&&&&\\[-5pt]
\mathrm{E\,I\!{V}} &
\mathfrak{e}_{\mathrm{I\! V}}&
 0&\alpha_2,
\alpha_3,\alpha_5&\mathfrak{su}(4)
&0\\ &&&&&\\[-5pt] \cline{4-6} &&&&&\\[-5pt]
&&&\alpha_4&\qquad
\mathfrak{su}(2)\oplus\mathfrak{su}(2)\oplus\mathfrak{su}(2)
\quad&0 \\ &&&&&\\[-5pt] \hline 
&&&&&\\[-5pt]
&&&
\alpha_1&\mathfrak{so}(6,6)&6\\ &&&&&\\[-5pt] \cline{4-6} &&&&&\\[-5pt]
&&&\alpha_2&\mathfrak{sl}(7,\mathbb{R})&3\\ &&&&&\\[-5pt] \cline{4-6} 
&&&&&\\[-5pt]
&&&\alpha_3&\mathfrak{sl}(2,\mathbb{R})\oplus\mathfrak{sl}(6,\mathbb{R})&4\\ 
&&&&&\\[-5pt] \cline{4-6} &&&&&\\[-5pt]
\mathrm{E\,V}&
\mathfrak{e}_{\mathrm{V}}&
 7&\alpha_4&\mathfrak{sl}(3,\mathbb{R})\oplus\mathfrak{sl}(2,\mathbb{R})
\oplus\mathfrak{sl}(4,\mathbb{R})&4\\ &&&&&\\[-5pt] \cline{4-6} &&&&&\\[-5pt]
&&&\alpha_5&\mathfrak{sl}(5,\mathbb{R})\oplus\mathfrak{sl}(3,\mathbb{R})
&3\\ &&&&&\\[-5pt] \cline{4-6} &&&&&\\[-5pt]
&&&\alpha_6&\mathfrak{so}(5,5)\oplus\mathfrak{sl}(2,\mathbb{R})
&5\\ &&&&&\\[-5pt] \cline{4-6} &&&&&\\[-5pt]
&&&\alpha_7&\mathfrak{e}_I
&4\\[4pt]
\hline 
\end{array}
\end{equation*}
\begin{equation*}\scriptsize
\begin{array}{|c|c|c|c|c|c|}\hline
&&&&&\\[-5pt]
\mathrm{type}&\mathfrak{g}&\nu&\alpha&\mathfrak{l}&\nu''\\ \hline
&&&&&\\[-6pt]
&&&
\alpha_1&\mathfrak{so}^*(12)& 3\\&&&&&\\[-6pt] \cline{4-6} &&&&&\\[-6pt] &&&
\alpha_2&\mathfrak{sl}(3,\mathbb{R})\oplus\mathfrak{sl}(2,\mathbb{H})& 1\\ 
&&&&&\\[-6pt] \cline{4-6} &&&&&\\[-6pt] &&&
\alpha_3&\mathfrak{sl}(2,\mathbb{R})\oplus\mathfrak{sl}(3,\mathbb{H})& 1\\ 
&&&&&\\[-6pt] \cline{4-6} &&&&&\\[-6pt] 
\mathrm{E\,V\!{I}}&
\mathfrak{e}_{\mathrm{V\! I}}&
 4&\alpha_4
&\mathfrak{sl}(3,\mathbb{R})\oplus\mathfrak{su}(2)
\oplus\mathfrak{sl}(2,\mathbb{H})& 1\\ &&&&&\\[-6pt] \cline{4-6} &&&&&\\[-6pt] &&&
\alpha_5&\mathfrak{sl}(3,\mathbb{R})\oplus\mathfrak{su}(2)\oplus\mathfrak{su}(2)
&1\\ &&&&&\\[-6pt] \cline{4-6} &&&&&\\[-6pt] &&&
\alpha_6&\mathfrak{so}(3,7)\oplus\mathfrak{su}(2)
&2\\ &&&&&\\[-6pt] \cline{4-6} &&&&&\\[-6pt] &&&
\alpha_7&\mathfrak{so}(3,7)
&2\\ &&&&&\\[-6pt] \hline &&&&&\\[-6pt]
&&&&&\\[-6pt]
&&&
\alpha_1&\mathfrak{so}(2,10)& 2\\&&&&&\\[-6pt] \cline{4-6} &&&&&\\[-6pt] 
&&&
\alpha_2,
\alpha_3,\alpha_5&
\mathfrak{su}(4)\oplus\mathfrak{sl}(2,\mathbb{R})& 1\\ &&&&&\\[-6pt] 
\cline{4-6} &&&&&\\[-6pt] 
\mathrm{E\,V\!{I}\!{I}}&
\mathfrak{e}_{\mathrm{V\! I\! I}}&
 3&
\alpha_4&\mathfrak{su}(2)
\oplus\mathfrak{su}(2)\oplus\mathfrak{su}(2)\oplus
\mathfrak{sl}(2,\mathbb{R})
& 1\\ &&&&&\\[-6pt] \cline{4-6} &&&&&\\[-6pt] &&&
\alpha_6&\mathfrak{so}(1,9)\oplus\mathfrak{sl}(2,\mathbb{R})
&1\\ &&&&&\\[-6pt] \cline{4-6} &&&&&\\[-6pt] &&&
\alpha_7&\mathfrak{e}_{IV}
&0\\ &&&&&\\[-6pt] \hline
&&&&&\\[-6pt]
&&&
\alpha_1&\mathfrak{so}(7,7)&6\\&&&&&\\[-6pt] \cline{4-6} &&&&&\\[-6pt] &&&
\alpha_2&\mathfrak{sl}(8,\mathbb{R})&4\\ &&&&&\\[-6pt] \cline{4-6} &&&&&\\[-6pt] &&&
\alpha_3&\mathfrak{sl}(2,\mathbb{R})\oplus\mathfrak{sl}(7,\mathbb{R})&4\\ 
&&&&&\\[-6pt] \cline{4-6} &&&&&\\[-6pt] 
\mathrm{E\,V\!{I}\!{I}\!{I}}&
\mathfrak{e}_{\mathrm{V\! I\! I\! I}}&
 8&
\alpha_4&\mathfrak{sl}(3,\mathbb{R})\oplus\mathfrak{sl}(2,\mathbb{R})
\oplus\mathfrak{sl}(5,\mathbb{R})&4\\ &&&&&\\[-6pt] \cline{4-6} &&&&&\\[-6pt] &&&
\alpha_5&\mathfrak{sl}(5,\mathbb{R})\oplus\mathfrak{sl}(4,\mathbb{R})
&4\\ &&&&&\\[-6pt] \cline{4-6} &&&&&\\[-6pt] &&&
\alpha_6&\mathfrak{so}(5,5)\oplus\mathfrak{sl}(3,\mathbb{R})
&5\\ &&&&&\\[-6pt] \cline{4-6} &&&&&\\[-6pt] &&&
\alpha_7&\mathfrak{e}_I\oplus\mathfrak{sl}(2,\mathbb{R})
&5\\ &&&&&\\[-6pt] \cline{4-6} &&&&&\\[-6pt] &&&
\alpha_8&\mathfrak{e}_{V}
&7\\ &&&&&\\[-6pt] \hline 
&&&&&\\[-6pt]
&&&
\alpha_1&\mathfrak{so}(3,11)&2\\&&&&&\\[-6pt] \cline{4-6} &&&&&\\[-6pt] &&&
\alpha_2,
\alpha_3,\alpha_5 
&\mathfrak{su}(4)
\mathfrak{sl}(3,\mathbb{R})&1\\ &&&&&\\[-6pt] \cline{4-6} &&&&&\\[-6pt]
\mathrm{E\,I\!{X}}&
\mathfrak{e}_{\mathrm{I\! X}}&
 4&
\alpha_4&\mathfrak{su}(2)\oplus\mathfrak{su}(2)\oplus\mathfrak{su}(2)\oplus
\mathfrak{sl}(3,\mathbb{R})
&1\\ &&&&&\\[-6pt] \cline{4-6} &&&&&\\[-6pt] &&&
\alpha_6&\mathfrak{so}(1,9)\oplus\mathfrak{sl}(3,\mathbb{R})
&1\\ &&&&&\\[-6pt] \cline{4-6} &&&&&\\[-6pt] &&&
\alpha_7&\mathfrak{e}_{IV}\oplus\mathfrak{sl}(2,\mathbb{R})
&1\\ &&&&&\\[-6pt] \cline{4-6} &&&&&\\[-6pt] &&&
\alpha_8&\mathfrak{e}_{VII}
&3
\\&&&&&\\[-6pt] \hline &&&&&\\[-6pt]
&&&
\alpha_1&\mathfrak{sp}(6,\mathbb{R})&3\\&&&&&\\[-6pt] \cline{4-6} &&&&&
\\\mathrm{F\,I}&
\mathfrak{f}_{\mathrm{I}}&
 4&
\alpha_2,
\alpha_3
&
\mathfrak{sl}(2,\mathbb{R})\oplus\mathfrak{sl}(3,\mathbb{R})&2
\\ &&&&&\\[-6pt] \cline{4-6} &&&&&\\[-6pt]
&&& \alpha_4&\mathfrak{so}(3,4)
&3
\\ &&&&&\\[-6pt] \hline &&&&&\\[-6pt] 
&&&
\alpha_1&\mathfrak{so}(5)&0\\&&&&&\\[-6pt] \cline{4-6} &&&&&\\[-6pt]
\mathrm{F\,I\!{I}}&
\mathfrak{f}_{\mathrm{I\! I}}&
 1&
\alpha_2&\mathfrak{su}(2)\oplus\mathfrak{su}(2) &0\\&&&&&\\[-6pt] \cline{4-6} &&&&&\\[-6pt]
&&&\alpha_3
&
\mathfrak{su}(3)&0\\ &&&&&\\[-6pt] \cline{4-6} &&&&&\\[-6pt]
&&& \alpha_4&\mathfrak{so}(7)
&0
\\ &&&&&\\[-6pt] \hline &&&&&\\[-6pt]
\mathrm{G\,I}&
\mathfrak{g}_{\mathrm{I}}&
 2&
\alpha_1,
\alpha_2&\mathfrak{sl}(2,\mathbb{R})&1\\[3pt]
\hline
\end{array}
\end{equation*}
\par\bigskip
Looking up to the list, we see that $\nu=\nu''$ if, and only if,
either\,:
\begin{enumerate}
\item[$(i)$] $\mathfrak{g}$ is of type $\mathrm{E\,I}$ and
$\alpha\in\{\alpha_1,\alpha_6\}$, or 
\item[$(ii)$]
$\mathfrak{g}$ is of type $\mathrm{E\,I\!{V}}$ and $\alpha$ is
any element of $\mathcal{B}$.\end{enumerate}
In case $(i)$, $\mathfrak{k}^s_+\simeq \mathfrak{so}(5)\oplus
\mathfrak{so}(5)$ and hence $|\mathbf{W}(\mathbf{K}_+)|=64=2^6$.
We have $\mathfrak{k}=\mathfrak{sp}(4)$ and hence
$|\mathbf{W}(\mathbf{K})|=384=2^44!$. Thus $\chi(\tilde{M})=384/64=6$.
Finally, since 
$\pi_1(M)\simeq\mathbb{Z}_2$ (see e.g. \cite{Wig98}), 
the manifold
$\tilde{M}$ is a two-fold covering
of $M$, and we obtain that $\chi(M)=3$. 
\par
In case $(ii)$, we have $\mathfrak{k}=
\mathfrak{f}_{\mathrm{I\! I\! I}}$ (the
compact form of the complex simple Lie algebra of type $\mathrm{F}_4$),
so that $|\mathbf{W}(\mathbf{K})|=1,152=2^73^2$.
We need to distinguish the different cases\,:
\begin{enumerate}
\item If $\alpha=\alpha_1,\alpha_6$, then
$\mathfrak{k}^s_+\simeq \mathfrak{so}(9)$, so that
$|\mathbf{W}(\mathbf{K}_+)|=384=2^44!$ and
$\chi(M)=\chi(\tilde{M})=1,152/384=3$.
\item If $\alpha=\alpha_2,\alpha_3,\alpha_5$, then $\mathfrak{k}^s_+
=\mathfrak{l}=\mathfrak{su}(3)$. Hence 
$|\mathbf{W}(\mathbf{K}_+)|=6=3!$ and
$\chi(M)=\chi(\tilde{M})=1,152/6=192$.
\item If $\alpha=\alpha_4$, then $\mathfrak{k}^s_+
=\mathfrak{su}(2)\oplus\mathfrak{su}(2)\oplus\mathfrak{su}(2)$. Hence 
$|\mathbf{W}(\mathbf{K}_+)|=8=2^3$ and
$\chi(M)=\chi(\tilde{M})=1,152/8=144$.
\end{enumerate}
This completes the proof of the Theorem. \end{proof}


The computation of the Euler characteristic in the general case
reduces to the previous Theorem and to the well known formula 
$\chi(M)=\chi(M')\cdot\chi(M'')$, that is valid
for
a smooth fiber bundle $M\to{M}'$ with typical fiber $M''$.\par
In particular we obtain\,:

\begin{thm} Let $M$ be the minimal orbit associated to the 
effective pair
$(\mathfrak{g},\mathfrak{q})$ of the real semisimple Lie algebra
$\mathfrak{g}$ and the complex parabolic subalgebra $\mathfrak{q}$
of its complexification $\Hat{\mathfrak{g}}$. Let
$\mathfrak{g}=\bigoplus_{i=1}^m{\mathfrak{g}}_i$ be the decomposition
of $\mathfrak{g}$ into the direct sum of its simple ideals.
For each $i=1,\hdots,m$ consider the pair $(\mathfrak{g}_i,
\mathfrak{q}_i)$, for 
$\mathfrak{q}_i=\mathfrak{q}\cap\Hat{\mathfrak{g}}_i$.
Then the Euler characteristic $\chi(M)$ of $M$ is always nonnegative and
is positive if and only if
each $\g_i$
is one of the following\,: 
\begin{enumerate}
\item
of the complex type; 
\item
compact;
\item of the real types $\mathrm{A\,I\!{I}},\;\mathrm{D\,I\!{I}},\;
\mathrm{E\,I\!{V}}$;
\item of the real type $\mathrm{A\,I}$, with
$\mathfrak{g}_i\simeq\mathfrak{sl}(n,\mathbb{R})$ and
$\Phi_i=\{\alpha_{j_1},\hdots,\alpha_{j_r}\}$ 
for a sequence of integers $\{j_h\}_{0\leq h\leq r+1}$ with\,:\par\noindent
$0=j_0<j_1<\cdots<j_r<j_{r+1}=n\quad\text{and}\quad \sum_{h=0}^r{\left[
\frac{j_{h+1}-j_h}{2}\right]}=\left[\frac{n}{2}\right]\,;$
\item 
of the real type $\mathrm{D\,I}$, with
$\mathfrak{g}_i\simeq\mathfrak{so}(p,2n-p)$ with $p$ odd,
$3\leq p\leq n$ and $\Phi_i=\{\alpha_1\}$;
\item 
of the real type $\mathrm{E\,I}$,
with 
$\Phi_i\subset\{\alpha_1,\alpha_6\}$.
\end{enumerate}
\end{thm}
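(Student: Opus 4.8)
The plan is to reduce the statement to the simple, maximal-parabolic situation of Theorem \ref{singolo} by means of two multiplicative principles and a monotonicity argument on Satake diagrams. First I would pass to the simple factors: by Proposition \ref{prodemme} the orbit factors as $M = M_1 \times \cdots \times M_m$, with $M_i$ the minimal orbit of $(\mathfrak{g}_i, \mathfrak{q}_i)$, effectivity being inherited by each factor; hence $\chi(M) = \prod_{i=1}^m \chi(M_i)$. Since each $M_i$ is a compact homogeneous space of a compact group, $\chi(M_i) \ge 0$ by the remarks preceding Proposition \ref{prop-rank}, so $\chi(M) \ge 0$ and $\chi(M) > 0$ precisely when every $\chi(M_i) > 0$. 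This reduces the theorem to a single simple $\mathfrak{g}$ with a nonempty $\Phi$ (nonempty by effectivity), where I must decide exactly when $\chi(M) > 0$.

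For simple $\mathfrak{g}$, Proposition \ref{criterio} turns positivity into the numerical equality $\nu(\mathfrak{l}) = \nu(\mathfrak{g})$, where the Satake diagram $\mathcal{S}''_{\Phi}$ of the Levi factor $\mathfrak{l}$ arises by deleting the nodes of $\Pi$ (Proposition \ref{essesecondo}). The monotonicity I would exploit is twofold: a strongly orthogonal system of real roots of $\mathfrak{l}$ is also one of $\mathfrak{g}$, so $\nu(\mathcal{S}''_{\Phi}) \le \nu(\mathfrak{g})$ always; and enlarging $\Phi$ enlarges $\Pi$, deleting more nodes, so $\Phi' \subseteq \Phi$ yields $\nu(\mathcal{S}''_{\Phi'}) \ge \nu(\mathcal{S}''_{\Phi})$. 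Consequently, if $\chi(M) > 0$ then $\nu(\mathcal{S}''_{\{\alpha\}}) = \nu(\mathfrak{g})$ for every $\alpha \in \Phi$, i.e. each single-cross suborbit $M_{\{\alpha\}}$ already has positive Euler characteristic. By Theorem \ref{singolo} this excludes at one stroke the types $\mathrm{A\,I\!{I}\!{I}}$, $\mathrm{A\,I\!{V}}$, $\mathrm{B\,I}$, $\mathrm{B\,I\!{I}}$, $\mathrm{C\,I}$, $\mathrm{C\,I\!{I}}$, $\mathrm{D\,I\!{I}\!{I}}$ and every noncompact exceptional type save $\mathrm{E\,I}$ and $\mathrm{E\,I\!{V}}$, and it confines the admissible crosses to the roots listed there.

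It then remains to dispatch the finitely many survivors and verify sufficiency. When $\nu(\mathfrak{g}) = 0$ (the complex, compact, $\mathrm{A\,I\!{I}}$, $\mathrm{D\,I\!{I}}$ and $\mathrm{E\,I\!{V}}$ cases) the equality $\nu(\mathfrak{l}) = 0$ holds for every $\Phi$, giving items (1)--(3). For $\mathrm{A\,I}$ with $\mathfrak{g} \simeq \mathfrak{sl}(n,\R)$ the diagram is split, so $\Pi = \Phi$; deleting $\{\alpha_{j_1}, \dots, \alpha_{j_r}\}$ from the $A_{n-1}$ chain splits it into segments and gives $\mathfrak{l} \simeq \bigoplus_{h=0}^r \mathfrak{sl}(j_{h+1}-j_h, \R)$, whence $\nu(\mathfrak{l}) = \sum_{h=0}^r [(j_{h+1}-j_h)/2]$; the criterion is then verbatim the stated condition $\sum_{h=0}^r [(j_{h+1}-j_h)/2] = [n/2]$, establishing item (4) with necessity and sufficiency simultaneously. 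For $\mathrm{D\,I}$ the only admissible single cross is $\alpha_1$, and only when $p$ is odd, so monotonicity forces $\Phi = \{\alpha_1\}$, which is item (5); for $\mathrm{E\,I}$ the admissible single crosses are $\alpha_1$ and $\alpha_6$, forcing $\Phi \subseteq \{\alpha_1, \alpha_6\}$, which is item (6).

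The one genuinely non-maximal case the singleton reduction does not settle, and the point I expect to be the main (if short) obstacle, is the sufficiency of $\Phi = \{\alpha_1, \alpha_6\}$ in type $\mathrm{E\,I}$: a priori combining two admissible crosses could make $\nu(\mathfrak{l})$ fall below $\nu(\mathfrak{g})$. This I would settle by a direct computation on the $E_6$ diagram: since $\mathrm{E\,I}$ is split, $\Pi = \Phi$, and deleting $\alpha_1$ and $\alpha_6$ leaves the $D_4$ subdiagram on $\{\alpha_2, \alpha_3, \alpha_4, \alpha_5\}$, so that $\mathfrak{l} \simeq \mathfrak{so}(4,4)$ and $\nu(\mathfrak{l}) = 4 = \nu(\mathfrak{e}_{\mathrm{I}})$, confirming $\chi(M) > 0$. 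With this last verification the admissible simple factors are exactly those in (1)--(6), and the product formula of the first step completes the proof.
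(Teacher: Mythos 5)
Your proof is correct and follows the paper's overall architecture: reduce to simple factors via Proposition \ref{prodemme} and multiplicativity of $\chi$, use Theorem \ref{singolo} to pin down the admissible types and crosses, then check sufficiency in the few surviving non-maximal cases. You diverge from the paper at two points, both times substituting one of its tools for another. For necessity, the paper passes through the $\mathbf{G}$-equivariant fibration $M\to M'$ onto the orbit of $(\mathfrak{g},\mathfrak{q}_{\{\alpha\}})$ and uses $\chi(M)=\chi(M')\cdot\chi(M'')$, whereas you argue via monotonicity of $\nu(\mathcal{S}''_{\Phi})$ under shrinking $\Phi$; your inequality $\nu(\mathfrak{g})=\nu(\mathcal{S}''_{\Phi})\leq\nu(\mathcal{S}''_{\{\alpha\}})\leq\nu(\mathfrak{g})$ is sound, since $\mathcal{R}''_{\Phi}\subset\mathcal{R}''_{\{\alpha\}}$ and a strongly orthogonal system of real roots in the smaller set is one in the larger. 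For the residual case $\mathrm{E\,I}$ with $\Phi=\{\alpha_1,\alpha_6\}$, the paper again fibers over $M^{\mathrm{E\,I}}_{\{\alpha_6\}}$ with fiber the $(\mathfrak{so}(5,5),\mathfrak{q}_{\{\alpha_1\}})$ orbit and gets $\chi(M)=3\cdot 2=6$, while you verify Proposition \ref{criterio} directly from $\mathfrak{l}\simeq\mathfrak{so}(4,4)$, $\nu=4$. The trade-off is minor: the paper's fibration route also delivers the numerical values of $\chi(M)$ (e.g.\ $6$ in the $\mathrm{E\,I}$ case and the explicit binomial expression in case (4)), which its proof records even though the theorem only asserts positivity; your route is marginally more economical, staying entirely inside the $\nu$-criterion and avoiding the identification of the fiber. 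Both are complete proofs of the statement as given.
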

\begin{proof} We recall that $\chi(M)\geq 0$ by \cite{Wan1949},
because $M$ is the homogeneous space of a compact group.\par
With the notation of
Proposition \ref{prodemme}, 
$\chi(M)=\chi(M_1)\cdots\chi(M_m)$,
where $M_i$ is a minimal orbit associated to the pair $(\mathfrak{g}_i,
\mathfrak{q}_i)$.
Therefore it suffices to prove the Theorem under the
additional assumption that $\mathfrak{g}$ is simple.
\par
Let 
$\mathfrak{q}=\mathfrak{q}_{\Phi}$
for a set $\Phi$ of simple roots contained in a basis $\mathcal{B}$,
that corresponds to the nodes
of the Satake diagram $\mathcal{S}$ of $\mathfrak{g}$. 
\par
If $\alpha\in\Phi$ and
$M'$ is the minimal orbit associated to 
$(\mathfrak{g},\mathfrak{q}_{\{\alpha\}})$, 
then we have a $\mathbf{G}$-equivariant fibration $M\to{M}'$, say
with fiber $M''$, and $\chi(M)=\chi(M')\cdot\chi(M'')$. The condition
$\chi(M')>0$ is then necessary in order that $\chi(M)\neq 0$. \par
Thus, by Theorem \ref{singolo}, the conditions of the Theorem are
necessary. \par
Since $\nu(\mathcal{S})=0$
when $\mathfrak{g}$ is either of the complex type, or compact,
or of one of the real types $\mathrm{A\,I\! I},\; \mathrm{D\,I\! I},\;
\mathrm{E\,I\! V}$, all $\emptyset\neq\Phi\subset\mathcal{B}$ lead
in these cases to $\chi(M)>0$. Also the case $(5)$ is clear, because, by
Theorem \ref{singolo}, in that case
we may have $\chi(M)>0$ only with $\Phi=\{\alpha_1\}$.
\par 
Thus we only need to consider the cases $(4)$ and $(6)$. \par
\smallskip
$(4)$ \quad When $\mathfrak{g}\simeq\mathfrak{sl}(n,\mathbb{R})$ and
$\Phi=\{\alpha_{j_1},\hdots,\alpha_{j_r}\}$,
the Levi subalgebra of $\mathfrak{g}_+$ is 
$\mathfrak{l}\simeq\bigoplus_{h=0}^r{\mathfrak{sl}(j_{h+1}-j_h,\mathbb{R})}$.
Hence
$\nu''=\sum_{h=0}^r{\left[\frac{j_{h+1}-j_h}{2}\right]}$ and
the condition $\nu''=\nu=\left[\frac{n}{2}\right]$ is necessary
and sufficient for having $\chi(M)>0$. \par
Since $\mathfrak{k}^s_+\simeq\bigoplus_{h=0}^r{\mathfrak{so}(j_{h+1}-j_h)}$,
we obtain  
$\chi(M)=\frac{[n/2]!}{\prod_{h=0}^r[(j_{h+1}-j_h)/2]!}
\,.$
\par
\smallskip
$(6)$\quad By Theorem \ref{singolo}, it only remains to consider the
case where $\Phi=\{\alpha_1,\alpha_6\}$. Let $M'$ be the minimal orbit
corresponding to $(\mathfrak{e}_{\mathrm{I}},\mathfrak{q}_{\{\alpha_6\}})$
and $M''$ the fiber of 
the fibration $M\to{M}'$. By Proposition \ref{fibrazione},
$M''$ is the minimal orbit associated to
$(\mathfrak{so}(5,5),\mathfrak{q}_{\{\alpha_1\}})$. We know from
Theorem \ref{singolo} that $\chi(M')=3$ and $\chi(M'')=2$. 
Thus $\chi(M)=\chi(M')\cdot\chi(M'')=6$.
\end{proof}
\section{Some examples}\label{esempi}
\begin{exam}
The method outlined above can also be applied in
the classical cases.
Let for instance
$\mathfrak{g}=\mathfrak{so}(2n)$, with $n\geq 3$.
We can assume that $\mathfrak{q}=\mathfrak{q}_{\Phi}$ with
$\Phi=\{\alpha_{j_1},\hdots,\alpha_{j_r}\}$, 
for a sequence of integers $0=j_0<j_1<\cdots<j_r\leq{j}_{r+1}={n}$,
with 
$j_r\neq n-1$.
Then we obtain\,:
$\mathfrak{l}=\mathfrak{k}^s_+=\mathfrak{so}(2(n-j_r))\oplus
\bigoplus_{h=0}^{r-1}\mathfrak{su}(j_{h+1}-
j_h)$.   
Hence, for the corresponding $M=M_{j_1,\hdots,j_r}^{\mathrm{D}_n}$ we obtain
\begin{equation*}
\chi(M_{j_1,\hdots,j_r}^{\mathrm{D}_n})=\begin{cases}\displaystyle
\frac{2^{j_r}n!}{\prod_{h=0}^{r}(j_{h+1}-j_h)!} &\text{if}\quad
j_r\leq n-2\,,\\
\\
\displaystyle
\frac{2^{n-1}n!}{\prod_{h=0}^{r-1}(j_{h+1}-j_h)!} &\text{if}\quad
j_r=n\,.
\end{cases}
\end{equation*}
\end{exam}
\begin{exam}
Let us turn now to the case $\mathrm{D\,I\! I}$.
Let $\mathfrak{g}\simeq\mathfrak{so}(1,2n-1)$, 
with $n\geq 4$, and $\mathfrak{q}=
\mathfrak{q}_{\Phi}$ with $\Phi=\{\alpha_{j_1},\hdots,\alpha_{j_r}\}$,
where again we assume that 
$0=j_0<j_1<\cdots<j_r\leq{j}_{r+1}={n}$,
and
$j_r\neq n-1$.
We note that, by Lemma \ref{lemdiffeom}, 
$M=M^{\mathrm{[D\,I\! I]_n}}_{j_1,\hdots,j_r}$ is diffeomorphic to
the minimal orbit associated to $(\mathfrak{g},\mathfrak{q}_{\Phi'})$
where $\Phi'=\Phi\cup\{\alpha_1\}$. Thus we can as well assume that
$\alpha_1\in\Phi$, i.e. that $j_1=1$. Since we know that for the
minimal orbit $M'$ associated to $(\mathfrak{g},\mathfrak{q}_{\{\alpha_1\}})$
we have $\chi(M')=2$, we can apply Proposition \ref{fibrazione}
to the $\mathbf{G}$ equivariant fibration $M\to{M}'$. Since
the fiber $M''$ is the complex flag manifold
$M^{\mathrm{D}_{n-1}}_{j_2-1,\hdots,j_r-1}$, 
we conclude that\,:
\begin{equation*}
\chi(M^{\mathrm{[D\,I\! I]_n}}_{1,j_2,\hdots,j_r})
=2\cdot\chi(M^{\mathrm{D}_{n-1}}_{j_2-1,\hdots,j_r-1})\,.
\end{equation*}
\end{exam}
\begin{exam}
Assume that $\mathfrak{g}\simeq\mathfrak{sl}(n,\mathbb{H})$
is of the real type $[\mathrm{A\,I\! I}]_{2n-1}$ and that $\mathfrak{q}=
\mathfrak{q}_{\Phi}$ with
$\Phi=\{\alpha_{2j_1-1},\hdots,\alpha_{2j_r-1}\}$, for a sequence of
integers satisfying
$0=j_0< j_1<\cdots<j_r<j_{r+1}=n+1$. Consider the minimal orbit
$M^{[\mathrm{A\,I\! I}]_{2n-1}}_{{2j_1-1},\hdots,{2j_r-1}}$.
Since $\bar\alpha_{2h}=\alpha_{2h-1}+\alpha_{2h}+\alpha_{2h+1}$ for
$1\leq h\leq n-1$, we obtain that the Levi subalgebra of $\mathfrak{g}_+$
is
$\mathfrak{l}=\bigoplus_{h=0}^r{\mathfrak{sl}(j_{h+1}-j_h-1,\mathbb{H})}$.
Hence\,:
\begin{equation*}\begin{aligned}
\chi(M^{[\mathrm{A\,I\! I}]_{2n-1}}_{2j_1-1,\hdots,2j_r-1})&=
\frac{2^nn!}{\displaystyle
\prod_{h=0}^r\left(2^{j_{h+1}-j_h-1}(j_{h+1}-j_h-1)!\right)}\\
&=\frac{2^rn!}{\displaystyle\prod_{h=0}^r(j_{h+1}-j_h-1)!}\,.
\end{aligned}
\end{equation*}
\end{exam}
\begin{exam}
Consider the case where 
$\mathfrak{g}=\mathfrak{e}_{\mathrm{I\! V}}$. We have already discussed the
case where $\Phi\subset\{\alpha_1,\alpha_6\}$. Assume therefore
that $\Phi\cap\{\alpha_2,\alpha_3,\alpha_4,\alpha_5\}\neq\emptyset$.
We observe that, by Lemma \ref{lemdiffeom}, the minimal orbit
associated to $(\mathfrak{e}_{\mathrm{I\! V}},\mathfrak{q}_{\Phi})$
is diffeomorphic to the minimal orbit associated to
$(\mathfrak{e}_{\mathrm{I\! V}},\mathfrak{q}_{\Phi\cup\{\alpha_1,\alpha_6\}})$.
Thus we can proceed as in the discussion of the case
$[\mathrm{D\, I\! I}]$. Indeed we can assume that
$\Phi=\{\alpha_1,\alpha_{j_1},\hdots,\alpha_{j_r},
\alpha_6\}$ with $r\geq 1$. By considering the 
$\mathbf{G}$-equivariant fibration over $M'=M^{\mathrm{E\,I\! V}}_{1,6}$,
associated to 
$(\mathfrak{e}_{\mathrm{I\! V}},\mathfrak{q}_{\{\alpha_1,\alpha_6\}})$,
we obtain by Proposition \ref{fibrazione} that the fiber
is $M''=M^{\mathrm{D}_4}_{j_1-1,\hdots,j_r-1}$. Hence\,
since\,:
\begin{equation*}   
\chi(M^{\mathrm{E\,I\! V}}_{1,6})=\chi(M^{\mathrm{E\,I\! V}}_{6})\cdot
\chi(M^{[\mathrm{D\,I\! I}]_5}_{1})=3\cdot 2=6\,,\end{equation*}
we obtain\,:
\begin{equation*}
\chi(M^{\mathrm{E\,I\! V}}_{1,j_1,\hdots,j_r,6})=6\cdot
\chi(M^{\mathrm{D}_4}_{j_1-1,\hdots,j_r-1})\,.
\end{equation*}
\end{exam}

\section{Appendix}\label{appendice}
In the following table we give, for each noncompact simple Lie algebra
of the real type, a linear representation $\mathfrak{g}$, its maximal
compact subalgebra $\mathfrak{k}$, the order of the Weyl group of the
maximal compact subgroup $\mathbf{K}$ of a connected Lie group with
Lie algebra $\mathfrak{g}$, the number $\nu$ of the elements of a
maximal system of strongly orthogonal real roots of $\mathcal{R}$,
the dimension $\ell$ of a Cartan subalgebra of $\mathfrak{g}$.
The numbers $\nu$ are essentially computed in \cite{Su59}.
However, since the computation there is rather implicit, we also give
an explicit list of maximal systems of strongly orthogonal roots
for each listed case.

$$\scriptsize
\begin{array}{|c|c|c|c|c|c|} \hline
&&&&&\\[-6pt]
\text{type}&\mathfrak{g}&\mathfrak{k}&|\mathbf{W}(\mathbf{K})|&\nu&\ell\\
&&&&&\\[-6pt]
\hline
&&&&&\\[-6pt]
{\mathrm{A\,I}}&\mathfrak{sl}(n,\mathbb{R})&\mathfrak{so}(n)&
2^{\left[\frac{n+1}{2}\right]-1}\cdot 
\left[\frac{n}{2}\right]!&\left[\frac{n}{2}\right]& n-1\\
&&&&&\\[-6pt]
\hline
&&&&&\\[-6pt]
{\mathrm{A\,II}} &\mathfrak{sl}(n,\mathbb{H})&\mathfrak{sp}(n)&2^n\cdot n! & 0
&2n-1\\
&&&&&\\[-6pt]
\hline
&&&&&\\[-6pt]
{\mathrm{A\,III}}&
\begin{gathered}
\mathfrak{su}(p,q)\\
2\leq p\leq q
\end{gathered}&
\mathfrak{s}(\mathfrak{u}(p)
\oplus\mathfrak{u}(q))&
p!\cdot q! &p &p+q-1\\ 
&&&&&\\[-6pt]
\hline
&&&&&\\[-6pt]
{\mathrm{A\,IV}}&\begin{gathered}
\mathfrak{su}(1,q)\\
q\geq 1
\end{gathered}&\mathfrak{u}(q)&
q!&1 &q\\[3pt]
\hline
\end{array}
$$
$$\scriptsize
\begin{array}{|c|c|c|c|c|c|} \hline
&&&&&\\[-6pt]
\text{type}&\mathfrak{g}&\mathfrak{k}&|\mathbf{W}(\mathbf{K})|&\nu&\ell\\
&&&&&\\[-6pt] \hline
&&&&&\\[-6pt]
{\mathrm{B\,I}}&\begin{gathered}
\mathfrak{so}(p,2n+1-p)\\
2\leq{p}\leq{n}
\end{gathered}&\mathfrak{so}(p)\oplus\mathfrak{so}(2n+1-p)&
2^{n-1}\left[\frac p 2\right]!
\left[\frac{2n+1-p}{2}\right]!&p &n\\ 
&&&&&\\[-6pt]
\hline
&&&&&\\[-6pt]
{\mathrm{B\,II}}&\begin{gathered}
\mathfrak{so}(1,2n)\\
n\geq{1}
\end{gathered}&\mathfrak{so}(2n)&
2^{n-1} n!
&1 &n
\\ &&&&&\\[-6pt] \hline 
&&&&&\\[-6pt]
{\mathrm{C\,I}}&\mathfrak{sp}(2n,\mathbb{R})&\mathfrak{u}(n)&
n!
&n &n\\ &&&&&\\[-6pt] \hline &&&&&\\[-6pt]
{\mathrm{C\,II}}&
\begin{gathered}
\mathfrak{sp}(p,q)\\
0<p\leq{q}
\end{gathered}&\mathfrak{sp}(p)\oplus\mathfrak{sp}(q)&
2^{p+q} p!\cdot q!
&p &p+q\\ &&&&&\\[-6pt] \hline &&&&&\\[-6pt]
{\mathrm{D\,I}}&
\begin{gathered}
\mathfrak{so}(p,2n-p)\\
n\geq 4\\
2\leq{p}\leq{n}
\end{gathered}&\mathfrak{so}(p)\oplus\mathfrak{so}(2n-p)&
\begin{gathered}
2^{n-p+2\left[\frac{p-1}{2}\right]}\;\times\\
\qquad \times\;{\begin{smallmatrix}
\left[\frac{p}{2}\right]!\left[\frac{2n-p}{2}\right]!
\end{smallmatrix}}
\end{gathered}
&2\left[\frac{p}{2}\right] &n
\\ &&&&&\\[-6pt] \hline &&&&&\\[-6pt]
{\mathrm{D\,II}}&
\begin{gathered}
\mathfrak{so}(1,2n-1)\\
n\geq 4
\end{gathered}
&\mathfrak{so}(2n-1)&
2^{n-1}(n-1)!
&0&n
\\ &&&&&\\[-6pt] \hline &&&&&\\[-6pt]
{\mathrm{D\,III}}&
\begin{gathered}
\mathfrak{so}^*(2n)\\
n\geq 2
\end{gathered}
&\mathfrak{u}(n)&
n!
&\left[\frac{n}{2}\right]&n
\\ &&&&&\\[-6pt] \hline &&&&&\\[-6pt]
{\mathrm{E\,I}}&\mathfrak{e}_{\mathrm{I}}&\mathfrak{sp}(4)&
384
&4&6
\\ &&&&&\\[-6pt] \hline 
&&&&&\\[-6pt]
{\mathrm{E\,II}}&\mathfrak{e}_{\mathrm{I\! I}}
&\mathfrak{su}(2)\oplus\mathfrak{su}(6)&
1,\! 440
&4&6
\\ &&&&&\\[-6pt] \hline &&&&&\\[-6pt]
{\mathrm{E\,III}}&\mathfrak{e}_{\mathrm{I\! I\! I}}
&\mathfrak{so}(10)\oplus\mathbb{R}&
1,\!920
&2&6
\\ &&&&&\\[-6pt] \hline
&&&&&\\[-6pt]
{\mathrm{E\,IV}}&\mathfrak{e}_{\mathrm{I\! V}}&\mathfrak{f}_4&
1,\!152
&0&6
\\ &&&&&\\[-6pt] 
\hline &&&&&\\[-6pt]
{\mathrm{E\,V}}&\mathfrak{e}_{\mathrm{V}}&\mathfrak{su}(8)&
40,\!320
&7&7
\\ &&&&&\\[-6pt] \hline &&&&&\\[-6pt]
{\mathrm{E\,VI}}&\mathfrak{e}_{\mathrm{V\! I}}
&\mathfrak{su}(2)\oplus\mathfrak{so}(12)&
46,\!080
&4&7
\\ &&&&&\\[-6pt] \hline &&&&&\\[-6pt]
{\mathrm{E\,VII}}&\mathfrak{e}_{\mathrm{V\! I\! I}}
&\mathfrak{e}_6\oplus\mathbb{R}&
51,\!840
&3&7
\\ &&&&&\\[-6pt] \hline &&&&&\\[-6pt]
{\mathrm{E\,VIII}}&\mathfrak{e}_{\mathrm{V\! I\! I\! I}}
&\mathfrak{so}(16)&
5,\!160,\!960
&8&8
\\ &&&&&\\[-6pt] \hline &&&&&\\[-6pt]
{\mathrm{E\,IX}}&\mathfrak{e}_{\mathrm{I\! X}}
&\mathfrak{su}(2)\oplus\mathfrak{e}_7&
5,\!806,\!080
&4&8
\\ &&&&&\\[-6pt] \hline &&&&&\\[-6pt]
{\mathrm{F\,I}}&\mathfrak{f}_{\mathrm{I}}&\mathfrak{su}(2)\oplus
\mathfrak{sp}(3)&
96
&4&4
\\ &&&&&\\[-6pt] \hline &&&&&\\[-6pt]
{\mathrm{F\,II}}&\mathfrak{f}_{\mathrm{I\! I}}&\mathfrak{so}(9)&
384
&1&4
\\ &&&&&\\[-6pt] \hline &&&&&\\[-6pt]
{\mathrm{G\,I}}&\mathfrak{g}_{\mathrm{I}}
&\mathfrak{so}(3)\oplus\mathfrak{so}(3)&
16
&2&2
\\[3pt]
\hline 
\end{array}
$$
\par\bigskip
We denote by $\Gamma$ a maximal system of strongly orthogonal real
roots in $\mathcal{R}$. For each case of the list we describe below
an explicit $\Gamma$.
\begin{align*}
\intertext{$[\mathrm{A}_{\ell}]$.
\quad $\mathcal{R}=\{\pm(e_i-e_j)\,|\, 1\leq i<j\leq n\}$.}
&\mathrm{A\,I} &\nu&=\left[{n}/{2}\right]&\Gamma&=\{e_i-e_{n+1-i}\,|\,
1\leq i\leq\nu\}\\
&\mathrm{A\,II}&\nu&=0&\Gamma&=\emptyset\\
&\mathrm{A\,III}&\nu&=p&\Gamma&=\{e_i-e_{n+1-i}\,|\,1\leq i\leq\nu\}\\
&\mathrm{A\,IV}&\nu&=1&\Gamma&=\{e_1-e_n\}
\intertext{$[\mathrm{B}_{\ell}]$.
\quad $\mathcal{R}=\{\pm e_i\,|\, 1\leq i\leq\ell\}\cup\{ 
\pm e_i\pm e_j\,|\, 1\leq i<j\leq\ell\}$}
&\mathrm{B\,I}&\nu&=p\;\mathrm{even}&
\Gamma&=\left\{e_{2j-1}\pm{e}_{2j}\,|\, 1\leq j\leq {p}/{2}\right\}\\
&\mathrm{B\,I}&\nu&=p\;\mathrm{odd}
&\Gamma&=\left\{e_{2j-1}\pm{e}_{2j}\,|\, 1\leq j\leq {(p-1)}/{2}
\right\}\cup\{e_p\}\\
&\mathrm{B\,II}&\nu&=1&\Gamma&=\{e_1\}
\intertext{$[\mathrm{C}_{\ell}]$.
\quad $\mathcal{R}=\{\pm{2e_i}\,|\, 1\leq i\leq\ell\}\cup
\{\pm e_i\pm{e}_j\,|\, 1\leq i<j\leq\ell\}$.}
&\mathrm{C\,I}&\nu&=\ell&\Gamma&=\{2e_i\,|\, 1\leq i\leq\ell\}\\
&\mathrm{C\,II}&\nu&=p&\Gamma&=\{e_{2i-1}+e_{2i}\,|\, 1\leq i\leq p\}\\
\intertext{$[\mathrm{D}_{\ell}]$.
\quad $\mathcal{R}=\{\pm{e}_i\pm{e}_j\,|\, 1\leq i<j\leq\ell\}$.}
&\mathrm{D\,I}&\nu&=2\left[{p}/{2}\right]&
\Gamma&=\{e_{2h-1}\pm{e}_{2h}\,|\, 2h\leq p\}\\
&\mathrm{D\,II}&\nu&=0&\Gamma&=\emptyset\\
&\mathrm{D\,III}&\nu&=[{\ell}/{2}]&\Gamma&=
\{e_{2h-1}+e_{2h}\,|\, 1\leq h\leq [\ell/2]\}\,.
\end{align*}
\subsection*{$[{{\mathrm{E}}_6,\; {\mathrm{E}}_7,\; {\mathrm{E}}_8]}$}
\quad
Following \cite{Bou68} we set\,: 
\par
\begin{equation*}
\mathcal{R}(\mathfrak{e}_8)=\{\pm{e}_i\pm{e}_j)\,|\, 1\leq i<j\leq 8\}
\cup
\left.\left\{\frac{1}{2}\sum_{i=1}^8(-1)^{k_i}e_i\,\right|\, k_i\in
\mathbb{Z}\,,\;
\sum_{i=1}^8k_i\in 2\mathbb{Z}\right\},\end{equation*}\par
$\mathcal{R}(\mathfrak{e}_7)=\mathcal{R}(\mathfrak{e}_8)\cap\{e_7+e_8\}^\perp$,
\par\smallskip
$\mathcal{R}(\mathfrak{e}_6)=\mathcal{R}(\mathfrak{e}_8)\cap\{e_6+e_8,
e_7+e_8\}^\perp$,\par\smallskip
so that in particular $\mathcal{R}(\mathfrak{e}_6)\subset
\mathcal{R}(\mathfrak{e}_7)\subset\mathcal{R}(\mathfrak{e}_8)$.\par
Likewise, the basis of simple roots can be considered as included one
into the other: $\mathcal{B}(\mathfrak{e}_6)=\{\alpha_1,\hdots,\alpha_6\}
\subset\mathcal{B}(\mathfrak{e}_7)=\{\alpha_1,\hdots,\alpha_7\}
\subset\mathcal{B}(\mathfrak{e}_8)=\{\alpha_1,\hdots,\alpha_8\}$
for $\alpha_1=\frac{1}{2}(e_1-e_2-e_3-e_4-e_5-e_6-e_7+e_8)$,
$\alpha_2=e_1+e_2$, $\alpha_3=e_2-e_1$, $\alpha_4=e_3-e_2$,
$\alpha_5=e_4-e_3$, $\alpha_6=e_5-e_4$, $\alpha_7=e_6-e_5$,
$\alpha_8=e_7-e_6$.\par
Set\,:
$$\beta_i=\left\{\begin{aligned}
e_{i}-e_{i+1}&\quad{\mathrm{for\; odd}}\quad i\\
e_{i-1}+e_{i}&\quad{\mathrm{for\; even}}\quad i\,,\end{aligned}
\right.
$$
so that $\beta_i\in\mathcal{R}(\mathfrak{e}_6)$ for $i\leq 4$,
$\beta_i\in\mathcal{R}(\mathfrak{e}_7)$ for $i\leq 7$,
$\beta_i\in\mathcal{R}(\mathfrak{e}_8)$ for $i\leq 8$.
Note that $\{\beta_i\,|\, 1\leq i\leq 8\}$ 
is a system of eight strongly orthogonal roots
in $\mathcal{R}(\mathfrak{e}_8)$. 
\par
Since the conjugation in the non-split forms are better expressed
in terms of the simple
roots $\alpha_i$, we also found convenient,
to describe the maximal sets $\Gamma$,  to introduce 
other roots $\gamma_i$,
defined as linear combinations of the simple roots
$\alpha_i$\,:
\par
$\gamma_1=\alpha_1+2\alpha_2+
2\alpha_3+3\alpha_4+2\alpha_5+\alpha_6={\begin{smallmatrix}
1&2&3&2&1\\
&&2
\end{smallmatrix}}\in\mathcal{R}(\mathfrak{e}_6)
$\par
$\gamma_2=\alpha_1+
\alpha_3+\alpha_4+\alpha_5+\alpha_6={\begin{smallmatrix}
1&1&1&1&1\\
&&0
\end{smallmatrix}}\in\mathcal{R}(\mathfrak{e}_6)
$\par
$\gamma_3=
\alpha_3+\alpha_4+\alpha_5={\begin{smallmatrix}
0&1&1&1&0\\
&&0
\end{smallmatrix}}\in\mathcal{R}(\mathfrak{e}_6)
$\par
$\gamma_4=\alpha_1+\alpha_2+2\alpha_3+2\alpha_4+\alpha_5=\begin{smallmatrix}
1&2&2&1&0&0\\
&&1
\end{smallmatrix}\in\mathcal{R}(\mathfrak{e}_7)$\par
$\gamma_5=\alpha_1+\alpha_2+2\alpha_3+2\alpha_4+2\alpha_5+2\alpha_6+\alpha_7=
\begin{smallmatrix}
1&2&2&2&2&1\\
&&1\end{smallmatrix}\in\mathcal{R}(\mathfrak{e}_7)$\par
$\gamma_6=\alpha_1+2\alpha_2+2\alpha_3+4\alpha_4+3\alpha_5+2\alpha_6+\alpha_7=
\begin{smallmatrix}
1&2&4&3&2&1\\
&&2
\end{smallmatrix}
\in\mathcal{R}(\mathfrak{e}_7)$\par
$\gamma_7=\alpha_2+\alpha_3+2\alpha_4+2\alpha_5+2\alpha_6+\alpha_7=
\begin{smallmatrix}
0&1&2&2&2&1\\
&&1
\end{smallmatrix}
\in\mathcal{R}(\mathfrak{e}_7)$\par
$\gamma_8=2\alpha_1+2\alpha_2+3\alpha_3+4\alpha_4+3\alpha_5+2\alpha_6+\alpha_7=
\begin{smallmatrix}
2&3&4&3&2&1\\
&&2
\end{smallmatrix}
\in\mathcal{R}(\mathfrak{e}_7)$\par
$\gamma_9=2\alpha_1+3\alpha_2+4\alpha_3+6\alpha_4+5\alpha_5+4\alpha_6+3\alpha_7
+2\alpha_8=
\begin{smallmatrix}
2&4&6&5&4&3&2\\
&&3
\end{smallmatrix}
\in\mathcal{R}(\mathfrak{e}_8)$\par
We can describe a system $\Gamma$ of strongly orthogonal roots for the
real simple Lie algebra of the exceptional types $\mathrm{E}_6,\;
\mathrm{E}_7,\;\mathrm{E}_8$ by\,:
\begin{align*}
&\mathrm{E\,I}&\nu&=4&\Gamma&=\{\beta_1,\beta_2,\beta_3,\beta_4\}\\
&\mathrm{E\,II}&\nu&=4&\Gamma&=\{\alpha_4,\gamma_1,\gamma_2,\gamma_3\}\\
&\mathrm{E\,III}&\nu&=2&\Gamma&=\{\gamma_1,\gamma_2\}\\
&\mathrm{E\,IV}&\nu&=0&\Gamma&=\emptyset\\
&\mathrm{E\,V}&\nu&=7&\Gamma&=
\{\beta_1,\beta_2,\beta_3,\beta_4,\beta_5,\beta_6,
\beta_7\}\\
&\mathrm{E\,VI}&\nu&=4&\Gamma&=\{\alpha_1,\gamma_4,\gamma_5,\gamma_6\}\\
&\mathrm{E\,VII}&\nu&=3&\Gamma&=\{\alpha_7,\gamma_7,\gamma_8\}\\
&\mathrm{E\,VIII}&\nu&=8&\Gamma&=\{\beta_1,\beta_2,\beta_3,
\beta_4,\beta_5,\beta_6
,\beta_7,\beta_8\}\\
&\mathrm{E\,IX}&\nu&=4&\Gamma&=\{\alpha_7,\gamma_7,\gamma_8,\gamma_9\}\\
\end{align*}
\subsection*{$[\mathrm F_4]$} 
\quad We have\,:
\begin{equation*}
\mathcal{R}=\{\pm{e}_i\,|\, 1\leq i\leq 4\} \cup
\{\pm e_i\pm e_j\,|\, 1\leq i<j\leq 4\}
\cup
\left\{\frac{\pm e_1\pm e_2\pm e_3\pm e_4}{2}\right\}.
\end{equation*}
 Then we have\,:
\begin{align*}
&{\mathrm{ F\,I}}&\nu&=4&\Gamma&=\{e_1\pm{e}_2,e_3\pm{e}_4\}\\
&{\mathrm{ F\, II}}&\nu&=1&\Gamma&=\{\alpha_1+2\alpha_2+3\alpha_3+2\alpha_4\}=
\{e_1\}
\intertext{$[{\mathrm G_2}]$.
\quad We have\,:
\begin{equation*}
\mathcal{R}=\{\pm(e_i-e_j)\,|\, 1\leq i<j\leq 3\}
\cup
\left\{\pm(2e_i-e_j-e_k)\,|\, \{i,j,k\}=\{1,2,3\}\right\}.
\end{equation*}}
&{\mathrm{ G\,I}}&\nu&=2&\Gamma&=\{e_1-{e}_2,2e_3-e_1-e_2\}\, .
\end{align*}

\providecommand{\bysame}{\leavevmode\hbox to3em{\hrulefill}\thinspace}
\providecommand{\href}[2]{#2}

\end{document}